\documentclass{amsart}

\usepackage{amssymb}
\usepackage{amsmath}
\usepackage{amsthm}

\newtheorem{theorem}{Theorem}[section]
\newtheorem{corollary}[theorem]{Corollary}
\newtheorem{lemma}[theorem]{Lemma}
\newtheorem{example}[theorem]{Example}
\newtheorem{claim}[theorem]{Claim}

\newtheorem*{lemma1*}{Theorem \ref{k-welldef}}
\newtheorem*{lemma2*}{Theorem \ref{TWgen}}
\newtheorem*{lemma3*}{Theorem \ref{bb2}}
\theoremstyle{definition}
\newtheorem{definition}[theorem]{Definition}
\newtheorem{question}[theorem]{Question}
\newtheorem*{def1*}{Definition \ref{k-Solovay}}

\usepackage{ stmaryrd }
\usepackage{ bbold }

\usepackage{tikz-cd}

\DeclareMathOperator{\cof}{cof}

\DeclareMathOperator{\Ob}{Ob}
\DeclareMathOperator{\crit}{crit}
\DeclareMathOperator{\mc}{mc}
 \DeclareMathOperator{\Coll}{Coll}
 \DeclareMathOperator{\dom}{dom}
\DeclareMathOperator{\id}{id}
\renewcommand{\le}{\leqslant}
\renewcommand{\ge}{\geqslant}
\renewcommand{\leq}{\leqslant}
\renewcommand{\geq}{\geqslant}
\newcommand\hacek\check
\newcommand\cat[1]{{}^\curvearrowright #1}

\title{Higher Solovay Models}

\author[Straffelini]{Cesare Straffelini}
 \address
       {Departament de Matemàtiques i Informatica, Universitat de Barcelona, Gran Via de les Corts Catalanes, 585, 08007 Barcelona, Catalunya; \& Dipartimento di Matematica, Università degli Studi di Trento, Via Sommarive, 14, 38123 Povo (Trento), Italia}
\email{straffelini@ub.edu \& cesare.straffelini@unitn.it}

\author[Thei]{Sebastiano Thei}
\address{Dipartimento di Scienze Matematiche, Informatiche e Fisiche, Università degli Studi di Udine, Via delle Scienze, 206, 33100 Udine, Italia}
\email{thei91.seba@gmail.com}

\begin{document}

\subjclass[2020]{03E45, 03E55, 03E15, 03E35}

\date{\today}

\keywords{Solovay model, generalised descriptive set theory, Příkrý-type forcing, choiceless inner models, absoluteness results}

\begin{abstract}
We introduce an axiomatisation of when a model of the form $L(V_{\kappa+1})^M$ can be considered a ``$\kappa$-Solovay model''; we show a characterisation of $\kappa$-Solovay models; and we prove elementary equivalences between $\kappa$-Solovay models.
\end{abstract}

\maketitle

\section{Introduction}


Robert Solovay, in his renowned article \cite{Sol70}, was able to construct a model of set theory (\textsf{ZF}) in which every subset of the real line is very ``regular'', for example it is Lebesgue measurable, has the Baire and perfect set properties, and many others. He built this model starting with a model $V$ of \textsf{ZFC} with an inaccessible cardinal $\lambda$, then considering the forcing extension $V[G]$ of $V$ resulting from collapsing gently $\lambda$ to $\omega_1$ with the Lévy collapse $\Coll(\omega, {<}\lambda)$, and after that taking the inner model $L(\mathbb R)$ of this extension $V[G]$. This model $L(\mathbb R)^{V[G]}$ is what for years has been called informally ``the'' Solovay model, even though it is not unique, depending from $\lambda$.\medskip

Years later, Joan Bagaria and Hugh Woodin in \cite{BW} proved a simple abstract characterisation of when a model of the form $L(\mathbb R)^M$ is ``a'' Solovay model:

\begin{definition}\label{Solovay}
Let $V\subseteq M$ be models of \textsf{ZFC}. We will say that the model $L(\mathbb R)^M$ is \emph{a Solovay model} over $V$ if $M$ satisfies the following conditions:
\begin{itemize}
\item for all $x\subseteq\omega$, the ordinal $\omega_1^M$ is an inaccessible cardinal in $V[x]$;
\item for all $x\subseteq\omega$, $x$ is \emph{small-generic} over $V$, that is, there is a countable poset $\mathbb P$ in $V$ and a filter $g\subseteq\mathbb P$ generic over $V$ such that $V[x]\subseteq V[g]$.
\end{itemize}
\end{definition}

Clearly, the original model by Solovay satisfies this definition.

\begin{theorem}[Solovay]\label{welldef}
If $\lambda$ is inaccessible in $V$, and $G\subseteq\Coll(\omega,{<}\lambda)$ is a filter generic over $V$, then the model $L(\mathbb R)^{V[G]}$ is a Solovay model over $V$.
\end{theorem}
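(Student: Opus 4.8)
The plan is to verify the two clauses of Definition~\ref{Solovay} for the pair $V\subseteq M$ with $M:=V[G]$. Everything rests on two standard facts about the Lévy collapse that I would record at the outset. First, since $\lambda$ is inaccessible, $\Coll(\omega,{<}\lambda)$ has the $\lambda$-chain condition: this is a $\Delta$-system argument applied to the finitely-supported conditions, using that any finite root is bounded below the regular cardinal $\lambda$, so that there are fewer than $\lambda$ possible restrictions of conditions to it. Second, for every $\alpha<\lambda$ there is the canonical factorisation $\Coll(\omega,{<}\lambda)\cong\Coll(\omega,{<}\alpha)\times\Coll(\omega,[\alpha,\lambda))$, under which $G$ splits as a product $G_\alpha\times G^\alpha$ with $G_\alpha$ generic over $V$ for $\Coll(\omega,{<}\alpha)$.

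First I would check that $\omega_1^M=\lambda$. For each $\gamma$ with $\omega\le\gamma<\lambda$ the $\gamma$-th coordinate of $G$ is a surjection $\omega\to\gamma$, so every such $\gamma$ is countable in $M$ and hence $\omega_1^M\ge\lambda$; on the other hand the $\lambda$-cc implies that cardinals $\ge\lambda$ are preserved, so $\lambda$ remains an uncountable cardinal in $M$ and therefore $\omega_1^M=\lambda$. The heart of the argument is then the reflection step: every $x\subseteq\omega$ lying in $M$ already lies in $V[G_\alpha]$ for some $\alpha<\lambda$. To prove this I would fix a $\Coll(\omega,{<}\lambda)$-name $\dot x$ for $x$; for each $n$, a maximal antichain deciding ``$\check n\in\dot x$'' has size $<\lambda$ by the $\lambda$-cc, so the union of the finite supports of its conditions is a subset of $\omega\times\lambda$ of size $<\lambda$ and hence bounded in $\lambda$ by regularity; taking the supremum over the countably many $n$ and using regularity once more yields a single $\alpha<\lambda$ beyond which none of these antichains reaches, so the value of $\dot x$ is already decided by the factor $G_\alpha$, i.e.\ $x\in V[G_\alpha]$.

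Both clauses of Definition~\ref{Solovay} then follow. For the first clause, $\Coll(\omega,{<}\alpha)$ has $V$-cardinality $|\alpha|<\lambda$, so it is ``small'' relative to the strong limit $\lambda$ and preserves the inaccessibility of $\lambda$: being $\lambda$-cc it keeps $\lambda$ regular, and since $\lambda$ is a strong limit in $V$ one has $(2^\mu)^{V[G_\alpha]}<\lambda$ for all $\mu<\lambda$, so $\lambda$ stays a strong limit. Thus $\lambda=\omega_1^M$ is inaccessible in $V[G_\alpha]$, and inaccessibility is downward absolute to inner models (a cofinal map $\mu\to\lambda$ with $\mu<\lambda$, or an injection $\lambda\to\mathcal P(\mu)$, witnessing a failure in $V[x]$ would also be such a witness in $V[G_\alpha]$), so $\omega_1^M$ is inaccessible in $V[x]\subseteq V[G_\alpha]$. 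For the second clause I would simply take $\mathbb P:=\Coll(\omega,{<}\alpha)\in V$ and $g:=G_\alpha$: then $g$ is $\mathbb P$-generic over $V$, $\mathbb P$ is countable in $M$ (its $V$-cardinality $|\alpha|$ lies below $\lambda=\omega_1^M$), and $V[x]\subseteq V[G_\alpha]=V[g]$, so $x$ is small-generic over $V$.

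I expect the only real obstacles to be the two bookkeeping facts recorded at the start — above all the $\lambda$-chain condition, since it is precisely the inaccessibility of $\lambda$ that powers the reflection ``$x\in V[G_\alpha]$'', and that reflection is what both clauses are really built on. I would also be careful to read ``countable poset in $V$'' in Definition~\ref{Solovay} in the sense of $M$, i.e.\ of $V$-cardinality $<\omega_1^M$: this is the only reading under which the theorem can hold, since $G$ itself collapses $\omega_1^V$ to $\omega$, so a real coding that collapse lies in no cardinal-preserving — in particular no genuinely countable-forcing — extension of $V$.
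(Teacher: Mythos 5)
Your proof is correct and is essentially the standard argument from \cite[Section 3.4]{Sol70}, which is all the paper itself cites for Theorem~\ref{welldef}: the $\lambda$-cc of the L\'evy collapse, the factorisation through $\Coll(\omega,{<}\alpha)$, the reflection of each real into some $V[G_\alpha]$, and preservation of inaccessibility by small forcing are exactly the ingredients the paper relies on later (compare the discussion in Lemma~\ref{lem: alternative def of Solovay}). Your closing remark that ``countable poset in $V$'' must be read as ``of $V$-cardinality below $\omega_1^M$'' is also the intended reading of Definition~\ref{Solovay}, so nothing is missing.
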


\begin{proof}
This theorem was proven in the original article \cite[Section 3.4]{Sol70}.
\end{proof}

Bagaria and Woodin showed in \cite{BW} that the converse is essentially true:

\begin{theorem}[Bagaria-Woodin]\label{BagariaWoodin}
If $V\subseteq M$ are models of \textsf{ZFC} with $L(\mathbb R)^M$ Solovay over $V$, there is a filter $C\subseteq\Coll(\omega, {<}\omega_1^M)$ generic over $V$ with $L(\mathbb R)^M=L(\mathbb R)^{V[C]}$.
\end{theorem}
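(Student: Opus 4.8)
The plan is to reconstruct a generic filter for $\Coll(\omega,{<}\omega_1^M)$ inside $M$ by carefully amalgamating the small-generic data guaranteed by the Solovay-model hypothesis. First I would fix $\lambda := \omega_1^M$ and recall the key structural fact about the Lévy collapse: $\Coll(\omega,{<}\lambda)$ is (up to forcing equivalence) the unique, or at least a ``universal'', poset of size ${<}\lambda$ that collapses every ordinal below $\lambda$ to be countable while remaining $\lambda$-cc, and it absorbs every small forcing. Concretely, I would invoke the folklore lemma that if $\lambda$ is inaccessible in $V$, $\mathbb{P}\in V$ is a poset of size ${<}\lambda$, and $g$ is $\mathbb P$-generic over $V$, then in $V[g]$ there is a filter on $\Coll(\omega,{<}\lambda)$ (as computed in $V$) generic over $V$ whose extension equals $V[g][h]$ for suitable $h$; equivalently $\Coll(\omega,{<}\lambda)\cong \mathbb P \times \Coll(\omega,{<}\lambda)$ as Boolean algebras. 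The first hypothesis of Definition \ref{Solovay} supplies that $\lambda$ stays inaccessible in every $V[x]$ for $x\subseteq\omega$, which is exactly what legitimises treating $\lambda$ as inaccessible-like from $V$'s point of view; the second hypothesis says every real of $M$ appears in some small $V$-generic extension.

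The core construction proceeds as follows. Work in $M$. Enumerate (using choice in $M$) a sufficiently rich family of dense subsets of $\Coll(\omega,{<}\lambda)$ lying in $V$; there are at most $|V_\lambda|$-many such, and since $\lambda$ is inaccessible in $V$ this is ${<}\lambda = \omega_1^M$, so the family is of size $\le\aleph_1$ but we need to meet them all. Here I would build the generic filter $C$ by a length-$\omega_1$ recursion, at stage $\alpha$ using the fact that the real coding the construction so far lies in some small $V$-generic extension $V[g_\alpha]$, inside which one can diagonalise against the next $V$-dense set and extend the partial collapse. The delicate point is coherence: one must ensure the conditions chosen at successive stages are compatible and that the union really is $V$-generic for the *full* $\Coll(\omega,{<}\lambda)$, not merely for initial segments $\Coll(\omega,{<}\beta)$. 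This is where the product/absorption lemma is used repeatedly — at each stage the work done so far is captured by a single small forcing over $V$, which can be absorbed into a fresh copy of the Lévy collapse. Having produced $C$ generic over $V$ with $V[C]\subseteq M$, one checks $\mathbb{R}^{V[C]} = \mathbb{R}^M$: the inclusion $\subseteq$ is immediate, and for $\supseteq$ one uses small-genericity again together with the fact that $C$ meets enough dense sets to absorb any given small generic real. Then $L(\mathbb R)^{V[C]} = L(\mathbb R)^M$ follows since $L(\mathbb R)$ depends only on the reals.

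**The main obstacle.** The hardest step is the simultaneous bookkeeping in the recursion: arranging that a single filter $C$ is generic for $\Coll(\omega,{<}\lambda)$ over $V$ while being constructed ``locally'' from many different small generic extensions $V[g_\alpha]$, each of which only sees a fragment of the picture. The subtlety is that genericity for the collapse is not the union of genericities for the pieces — one needs the $\lambda$-cc of $\Coll(\omega,{<}\lambda)$ in $V$ (which follows from inaccessibility of $\lambda$) to argue that any dense set in $V$ is, in effect, ``decided below some $\beta<\lambda$'', hence handled at some bounded stage of the recursion. Managing this, and verifying that the absorption isomorphisms chosen at different stages cohere into a single embedding, is the technical heart; once it is in place, the reals-agreement and the conclusion about $L(\mathbb R)$ are comparatively routine. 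An alternative, perhaps cleaner, route would be to avoid an explicit recursion and instead argue abstractly: show $\mathbb{R}^M$ is the set of reals of *some* $\Coll(\omega,{<}\lambda)$-generic extension by a Löwenheim–Skolem / forcing-term argument, using that every real is small-generic and that $\mathcal P(\omega)^M$ has the right cardinality and the right degree of genericity over $V$. I would attempt the abstract route first and fall back on the explicit construction if the amalgamation of small generics resists a clean formulation.
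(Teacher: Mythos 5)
Your overall instinct---amalgamate the small generics using the absorption property of the L\'evy collapse---is the right starting point, but the core of your plan has a genuine gap: you propose to build $C$ by a length-$\omega_1^M$ recursion \emph{inside} $M$, and in general no suitable $C$ exists inside $M$ at all. If $M$ fails \textsf{CH} (which is compatible with $L(\mathbb R)^M$ being Solovay over $V$: e.g.\ take $M=V[G][H]$ with $H$ adding $\aleph_2$ Cohen reals over the L\'evy extension $V[G]$), then any $V$-generic $C\subseteq\Coll(\omega,{<}\omega_1^M)$ with $\mathbb R^{V[C]}=\mathbb R^M$ satisfies $V[C]\models\mathsf{CH}$, so $V[C]$ contains a bijection between $\omega_1^M$ and $\mathbb R^M$; if $C\in M$ this bijection would lie in $M$, contradicting $|\mathbb R^M|^M>\aleph_1^M$. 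So the filter $C$ must be produced in a generic extension of $M$, not in $M$ itself. Your counting is also off in a way that matters for the recursion: the dense subsets of $\Coll(\omega,{<}\lambda)$ lying in $V$ number $(2^\lambda)^V\ge\lambda^+$, not ${<}\lambda$ (the $\lambda$-cc bounds the \emph{size} of each maximal antichain, not the \emph{number} of dense sets), and likewise $\mathbb R^M$ may have size greater than $\aleph_1^M$, so a recursion of length $\omega_1^M$ cannot explicitly enumerate either the dense sets to be met or the reals to be captured.

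The standard repair, which is what the cited proofs in \cite{BW} and \cite{BB04b} do and what the present paper's higher analogue (Theorem \ref{TWgen}, via the constellating poset $\mathbb W$) reproduces, is to replace the recursion by an auxiliary forcing defined in $M$ whose conditions are (essentially) the small $V$-generic filters $g\in M$ on initial segments $\Coll(\omega,{\le}\alpha)$, ordered by coherent extension. One then proves density statements in this poset: for each real $x\in M$ the conditions capturing $x$ are dense (this is where small-genericity and the absorption lemma enter), and for each $V$-dense $D\subseteq\Coll(\omega,{<}\lambda)$ the conditions meeting $D$ are dense. A filter generic over $M$ for this auxiliary poset then yields $C$ in $M[H]$, and genericity over $M$---rather than an enumeration---is what guarantees that all $(2^\lambda)^V$ dense sets and all reals of $M$ are handled. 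The verification that $\mathbb R^{V[C]}=\mathbb R^M$ and hence $L(\mathbb R)^{V[C]}=L(\mathbb R)^M$ then proceeds much as you sketch. Your ``abstract route'' fallback is too vague to rescue the argument as written.
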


\begin{proof} The proof of this theorem can be found in \cite[Theorem 2.11]{BW}, or written in a way that is more similar to our presentation here in \cite[Lemma 1.2]{BB04b}.
\end{proof}

In addition to this, Solovay models have many absoluteness properties, as proved by Joan Bagaria and Roger Bosch in their articles \cite{BB04} and \cite{BB04b}. For example,

\begin{theorem}[Bagaria-Bosch]\label{bb}
    Suppose that $L(\mathbb R)^M$ and $L(\mathbb R)^N$ are Solovay models over the same model $V$, such that $\mathbb R^M\subseteq \mathbb R^N$ and $\omega_1^M=\omega_1^N$. Then there exists an unique elementary embedding $j:L(\mathbb R)^M\hookrightarrow L(\mathbb R)^N$ that fixes all the ordinals.
\end{theorem}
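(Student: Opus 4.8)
The plan is to build $j$ by transporting definitions from $L(\mathbb{R})^M$ to $L(\mathbb{R})^N$, reducing every property required of $j$ to a single absoluteness statement. I would begin from the standard fact that $L(\mathbb{R})$ believes every set to be definable from one real and finitely many ordinals: for each $a\in L(\mathbb{R})^M$ there are a formula $\varphi$, a real $x\in\mathbb{R}^M$ and ordinals $\vec\alpha$ with $a$ the unique $b$ such that $L(\mathbb{R})^M\models\varphi(b,x,\vec\alpha)$. Since $\mathbb{R}^M\subseteq\mathbb{R}^N$ the parameter $x$ also lives in $L(\mathbb{R})^N$, so I would define $j(a)$ to be the unique $b$ with $L(\mathbb{R})^N\models\varphi(b,x,\vec\alpha)$. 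This is a sound definition of an elementary embedding fixing the ordinals provided one has
\[
(\star)\qquad L(\mathbb{R})^M\models\psi(x,\vec\alpha)\ \Longleftrightarrow\ L(\mathbb{R})^N\models\psi(x,\vec\alpha)
\]
for every formula $\psi$, real $x\in\mathbb{R}^M$ and finite tuple $\vec\alpha$ of ordinals. Indeed, well-definedness of $j$ is the instance of $(\star)$ asserting that two definitions of the same $a$ have equal witnesses in $L(\mathbb{R})^N$ (code the two reals as one); elementarity is the instance for an arbitrary formula applied to finitely many defined elements; fixing an ordinal $\beta$ is the instance $\psi\equiv(v=\beta)$; and uniqueness is automatic, because an elementary $j'$ fixing all ordinals must fix every real (for $x\subseteq\omega$ one has $n\in x\iff j'(n)\in j'(x)\iff n\in j'(x)$), so that $j'(a)$ is again pinned down as the $L(\mathbb{R})^N$-witness of $\varphi(\cdot,x,\vec\alpha)$.

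Thus everything reduces to $(\star)$, which is where the Solovay-model hypotheses enter. Put $\lambda:=\omega_1^M=\omega_1^N$; by the first clause of Definition~\ref{Solovay} it is inaccessible in $V$, and in $V[x]$ for every real $x$. By Theorem~\ref{BagariaWoodin} there are filters $C,D$ generic over $V$ for $\Coll(\omega,{<}\lambda)$ with $L(\mathbb{R})^M=L(\mathbb{R})^{V[C]}$ and $L(\mathbb{R})^N=L(\mathbb{R})^{V[D]}$. Fix $x\in\mathbb{R}^M\subseteq\mathbb{R}^N$. Since $\Coll(\omega,{<}\lambda)$ is $\lambda$-c.c., $x$ lies in an initial segment $V[C\restriction\beta]$ with $\beta<\lambda$; by the intermediate-model theorem $V[x]$ is then a forcing extension of $V$ by a poset of size $<\lambda$; and using the standard absorption and factorisation properties of the Lévy collapse — every poset of size $<\lambda$ is absorbed by $\Coll(\omega,{<}\lambda)$, and the tail of a Lévy collapse is itself the Lévy collapse of the intermediate model — I would rewrite
\[
V[C]=V[x][H_M]\qquad\text{and}\qquad V[D]=V[x][H_N],
\]
where $H_M$ and $H_N$ are generic over $V[x]$ for $\Coll(\omega,{<}\lambda)^{V[x]}$. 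Finally I would invoke the weak homogeneity of $\Coll(\omega,{<}\lambda)^{V[x]}$ together with Solovay's analysis of $L(\mathbb{R})$ in collapse extensions \cite{Sol70}: for any generic $H$ for $\Coll(\omega,{<}\lambda)^{V[x]}$ over $V[x]$, the statement $L(\mathbb{R})^{V[x][H]}\models\psi(x,\vec\alpha)$ holds if and only if, in $V[x]$, it is forced by $\Coll(\omega,{<}\lambda)$ that $\psi^{L(\mathbb{R})}(\check x,\vec{\check\alpha})$. The latter does not mention $H$; taking $H=H_M$ and $H=H_N$ shows that both sides of $(\star)$ are equivalent to this one assertion about $V[x]$, which establishes $(\star)$.

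I expect the main obstacle to be the refactoring step — presenting both $V[C]$ and $V[D]$ as Lévy collapses over the \emph{same} model $V[x]$. One must check with some care that the small quotient forcing sitting between $V$ (respectively between $V[x]$) and $V[C\restriction\beta]$ is genuinely swallowed by the collapse, so that the forcing that remains really is $\Coll(\omega,{<}\lambda)^{V[x]}$; this is exactly what makes the two sides of $(\star)$ comparable, and it is not automatic, since a priori one does not even have $L(\mathbb{R})^M\subseteq L(\mathbb{R})^N$ — the set $\mathbb{R}^M$ need not belong to $L(\mathbb{R})^N$. Once $(\star)$ is available, verifying that $j$ is a well-defined elementary embedding fixing the ordinals, and that it is unique, is routine, as indicated above.
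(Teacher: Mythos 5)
Your proposal is correct, and its skeleton --- every element of $L(\mathbb R)$ is definable from a real and finitely many ordinals, so the whole theorem reduces to the single two-model absoluteness equivalence $(\star)$ for formulas with real and ordinal parameters, from which well-definedness, elementarity, ordinal-fixing and uniqueness all follow formally --- is exactly the skeleton of the proof the paper cites (\cite[Lemma 1.5]{BB04b}) and also of the paper's own proof of the higher analogue, Theorem \ref{bb2}. Where you genuinely diverge from the latter is in how the absoluteness equivalence is established. You take the classical route: apply Theorem \ref{BagariaWoodin} to present both models as $L(\mathbb R)$ of Lévy-collapse extensions $V[C]$ and $V[D]$, factor each through $V[x]$ using Solovay's absorption analysis from \cite{Sol70}, and then invoke weak homogeneity of $\Coll(\omega,{<}\lambda)^{V[x]}$ so that the truth value of $\psi^{L(\mathbb R)}(\check x,\check{\vec\alpha})$ is decided by the empty condition over $V[x]$, hence is the same in both extensions. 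The paper's proof of Theorem \ref{bb2} cannot lean on homogeneity of its $\Sigma$-Příkrý systems; instead it finds one small generic $g$ with the parameter in $V[g]$ and $g\in M\cap N$, extracts a single condition $p$ of the quotient forcing deciding $\varphi$, and re-uses the triple $\langle p,d,g\rangle$ as a condition of the constellating poset over \emph{both} models to produce generics through $p$ on each side; the forcing theorem then transfers the statement. Your homogeneity argument buys a shorter proof in the classical setting, while the condition-transfer argument is the one that survives generalisation. The one step you should spell out is the load-bearing factoring claim $V[C]=V[x][H_M]$ with $H_M$ generic over $V[x]$ for $\Coll(\omega,{<}\lambda)^{V[x]}$ --- you correctly flag it as the main obstacle; it is precisely the content of Solovay's analysis behind Theorem \ref{welldef} (and of \cite[Lemma 1.2]{BB04b}), using that $\omega_1^M$ is inaccessible in $V[x]$, so it is available and the argument goes through.
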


\begin{proof} The proof of this theorem can be found in \cite[Lemma 1.5]{BB04b}.
\end{proof}

We develop an abstract framework to show that the axiomatic characterization of $L(\mathbb{R})$, started in 1970 by Solovay (Theorem \ref{welldef}) and continued in the following decades by Bagaria, Bosch and Woodin (Theorems \ref{BagariaWoodin} and \ref{bb}) can be shifted upwards to the uncountable level. 

\begin{def1*}
Let $V\subseteq M$ be models of \textsf{ZFC}. We say that the model $L(V_{\kappa+1})^M$ is a \emph{$\kappa$-Solovay model} over $V$ if $M$ satisfies the following conditions:
\begin{itemize}
\item for all subsets $x\subseteq\kappa$, the ordinal $(\kappa^+)^M$ is an inaccessible cardinal in $V[x]$;
\item for $x\subseteq \kappa$ in $M$, there is a $\Sigma$-Příkrý forcing $\mathbb P\in V$ with $|\mathbb P|^M\le\kappa$, and $g\subseteq\mathbb P$ generic over $V$ with $V[x]\subseteq V[g]$ and $\cof(\kappa)^{V[g]}=\omega$.
\end{itemize}
\end{def1*}

The three main results of this paper are higher analogues of Theorems \ref{welldef}, \ref{BagariaWoodin} and \ref{bb}:

\begin{lemma1*}
    Let $\mathbb P_\infty$ be a $(\kappa,\lambda)$-nice weak $\Sigma$-system, and let $G\subseteq \mathbb P_\infty$ be a generic filter over $V$. Then $L(V_{\kappa+1})^{V[G]}$ is $\kappa$-Solovay over $V$.
\end{lemma1*}

\begin{lemma2*}
Suppose $V\subseteq M$ are models of \textsf{ZFC} and assume that $L(V_{\kappa+1})^M$ is a $\kappa$-Solovay model over $V$. Let $\mathbb P_\infty$ be a $(\kappa,(\kappa^+)^M)$-nice weak $\Sigma$-system in $V$. Then there is a forcing notion $\mathbb W$ in $M$ (the \emph{constellating poset}) 
such that in the forcing extension by $\mathbb W$ there is a filter $C\subseteq\mathbb P_\infty$ generic over $V$ such that $M$ and $V[C]$ have the same subsets of $\kappa$, hence the same $L(V_{\kappa+1})$.
\end{lemma2*}

\begin{lemma3*}
Suppose $V\subseteq M$ and $V\subseteq N$ are models of \textsf{ZFC} such that $L(V_{\kappa+1})^M$ and $L(V_{\kappa+1})^N$ are $\kappa$-Solovay models over $V$, $\wp(\kappa)^M\subseteq \wp(\kappa)^N$ and $(\kappa^+)^M=(\kappa^+)^N$. Suppose there is a $(\kappa,(\kappa^+)^M)$-nice weak $\Sigma$-system in $V$. Then there is a unique
\[j:L(V_{\kappa+1})^M\hookrightarrow L(V_{\kappa+1})^N\]
that is an elementary embedding and fixes all the ordinals.
\end{lemma3*}

Let us briefly motivate the above theorems. We live in a time where many of the classical results about $L(\mathbb R)$ get generalised for $L(V_{\kappa+1})$ for $\kappa$ a cardinal (usually, with some additional requirements), so it has not been a surprise when the second author of this article, together with Vincenzo Dimonte and Alejandro Poveda, published the article \cite{DPT}, in which they built a model of $\textsf{ZF}+\textsf{DC}_{\kappa}$ in which every subset of $\kappa^\omega$ has the $\kappa$-perfect set property, and every subset of $\prod_{n<\omega} \kappa_n$ has the ${{\mathcal U}}$-Baire property. These are higher analogues to this new context of the statements ``every set of reals has the perfect set property'' and ``every set of reals has the Baire property'', that hold true in the Solovay model.\medskip

Curiously, the first proof of the consistency of the $\kappa$-perfect set property relied on the structural properties of the model $L(V_{\kappa+1})$, examined through the lens of the axiom $I_0(\kappa)$, one of the strongest known large cardinal axioms. Specifically, work by Woodin in \cite
{WoodinPartII}, and by his students Cramer in \cite
{Cramer} and Shi in \cite
{Shi}, has revealed that under $I_0(\kappa)$ every set $A\subseteq \kappa^\omega$ in $L(V_{\kappa+1})$ has the $\kappa$-perfect set property. Along the same lines, Dimonte, Iannella and L{\"u}cke in \cite{DimonteIannella} showed that, under $I_0(\kappa)$, all projective subsets of $\prod_{n<\omega} \kappa_n$ have the ${\mathcal{U}}$-Baire Property (for a certain ${\mathcal{U}}$).\medskip

These results are pieces of evidence suggesting that the investigation of regularity properties of sets in $ L(V_{\kappa+1})$ under $I_0(\kappa)$ can be regarded as a natural generalisation of the classical study of regularity properties of sets in $L(\mathbb{R})$ under $\textsf{AD}^{L(\mathbb{R})}$. Indeed, Woodin realised that $L(V_{\kappa+1})$ behaves under $I_0(\kappa)$ very much like $L(\mathbb{R})$ does under $\textsf{AD}^{L(\mathbb{R})}$  (see, for example, \cite{Koel} or \cite{Cramer}). For instance, $L(V_{\kappa+1})\models ``\kappa^+\text{ is measurable''}$ under $I_0(\kappa)$, while $L(\mathbb{R})\models``\omega_1\text{ is measurable''}$ under $\textsf{AD}^{L(\mathbb{R})}$.\medskip

In light of this, axiom $I_0(\kappa)$ provides arguably the appropriate axiomatic framework to develop the so-called generalised descriptive set theory in $\kappa$-Polish spaces at singular cardinals $\kappa$. A notable key point for this parallelism is that such a cardinal $\kappa$ shares with $\omega$ some crucial properties which are used to obtain the various results in classical descriptive set theory. For instance, like $\omega$, $\kappa$ must be a strong limit cardinal of countable cofinality.\medskip


Taking cue from some descriptive set-theoretical aspects of $I_0(\kappa)$, Dimonte and Motto Ros developed a generalised descriptive set theory in their book \cite{DimonteMottoShi} on strong limit uncountable cardinals of countable cofinality under $I_0(\kappa)$. However, there is evidence suggesting that $I_0(\kappa)$ may not be the optimal large cardinal assumption for ensuring regularity properties for subsets in $\kappa$-Polish spaces. Indeed, as pointed out in the first paragraph, Solovay in \cite{Sol70} used a large cardinal assumption which is much lower than $\textsf{AD}^{L(\mathbb R)}$. Similarly, the authors in \cite{DPT} obtained the same configuration in the generalised context, relying on large cardinal assumptions in the realm of supercompactness.\medskip 

To keep on with the similarities, the model introduced in \cite{DPT} is the $L(V_{\kappa+1})$ of a forcing extension of $V$. Here, though, is where the similarities between this new work and Solovay's historic \cite{Sol70} stop, since the forcing notion used by Dimonte, Poveda and the second author is very different than the Lévy collapse.\medskip

This paper aims to explore the interaction between generalised descriptive set theory and large cardinal axioms through the lens of higher Solovay models. These models provide a natural setting for studying regularity properties in $\kappa$-Polish spaces that hold true in $L(V_{\kappa+1})$ under $I_0(\kappa)$.\medskip

This article is structured as follows. In Section 2, we introduce all preliminaries about forcing that will be needed throughout the paper. Many of these are already stated in \cite{DPT}, but we report them for sake of completeness. Later, in Section 3, we recall the concept of $\Sigma$-Příkrý forcings and $(\kappa,\lambda)$-nice weak $\Sigma$-systems, the technical instruments used in \cite{DPT} for building their higher Solovay model. We strengthen the notion of these weak $\Sigma$-systems by adding an hypothesis of continuity, that is necessary later.\medskip

In Section 4, we recall the two main examples of $(\kappa,\lambda)$-nice weak $\Sigma$-systems from \cite{DPT} and check that they satisfy the additional hypothesis of continuity. In Section 5, we finally generalise Definition \ref{Solovay}, explaining when a model of the form $L(V_{\kappa+1})^M$ can be called a \emph{$\kappa$-Solovay model} over $V$. Concretely, we get immediately the analogous of Theorem \ref{welldef}, namely, the models built by Dimonte, Poveda and the second author in \cite{DPT} are indeed satisfying our definition of $\kappa$-Solovay model.\medskip


Section 6 will be dedicated to proving the higher analogue of Bagaria-Woodin's Theorem \ref{BagariaWoodin}. In Section 7, we see how this theorem has many consequences in the generalised descriptive set theory, showing that every $\kappa$-Solovay model satisfies the desired regularity properties for subsets of $\kappa$-Polish spaces.\medskip


To wrap up, in Section 8 we will prove a version of Bagaria-Bosch's Theorem \ref{bb} adapted to $\kappa$-Solovay models. Some open questions are discussed in Section 9.

\section{Forcing Preliminaries}

In this section, we collect some definitions and basic results about forcing that are necessary for understanding the rest of the article. Our main reference is Kenneth Kunen's book \cite{Kunen}. Formally, a forcing poset is an ordered triple $\langle\mathbb P,\le_\mathbb P,\mathbb 1_\mathbb P\rangle$ where $\le_\mathbb P$ is a partial order relation on $\mathbb P$ with a maximum element, that is $\mathbb 1_\mathbb P$. However, when there is no risk of confusion, we often adopt a more informal style, as in ``\emph{let $\mathbb P$ be a forcing poset}''.\medskip

An important definition we use, which does not appear on Kunen's book, is that of a \emph{weak projection}, introduced by Foreman and Woodin in their article \cite{FW}:

\begin{definition}\label{weakproj}
If $\langle \mathbb P, \le_{\mathbb P}, \mathbb 1_{\mathbb P}\rangle$ and $\langle\mathbb Q, \le_{\mathbb Q}, \mathbb 1_{\mathbb Q}\rangle$ are forcing posets, a map $\pi:\mathbb P\to\mathbb Q$ is called \emph{weak projection} if the following conditions hold:
\begin{itemize}
    \item for all $p$ and $p'$ in $\mathbb P$, if $p\le_{\mathbb P} p'$, then $\pi(p)\le_{\mathbb Q} \pi(p')$;
    \item for all $p\in\mathbb P$, there exists $p^\ast\le_{\mathbb P} p$ such that for all $q\in\mathbb Q$ with $q\le_{\mathbb Q} \pi(p^\ast)$ there exists $p'\le_{\mathbb P} p$ satisfying $\pi(p')\le_{\mathbb Q} q$.
\end{itemize}
A \emph{projection} is a weak projection $\pi$ that also satisfies $\pi(\mathbb 1_\mathbb P)=\mathbb 1_\mathbb Q$ and for which, in the second condition above, one can always take $p^\ast=p$.
\end{definition}

\begin{lemma}\label{filterweak}
    If $\pi:\mathbb P\to\mathbb Q$ is a weak projection between forcing posets, and $G$ is a $\mathbb P$-generic filter over $V$, then the upward closure of $\pi''G$ is a $\mathbb Q$-generic filter over $V$.
\end{lemma}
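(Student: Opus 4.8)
The plan is to verify the two defining conditions of a generic filter for the upward closure $H$ of $\pi''G$, where $G\subseteq\mathbb P$ is generic over $V$. Write $H=\{q\in\mathbb Q:\exists p\in G\ \pi(p)\le_{\mathbb Q} q\}$. First I would check that $H$ is a filter. It is upward closed by construction, so the work is in directedness: given $q_0,q_1\in H$, pick $p_0,p_1\in G$ with $\pi(p_i)\le_{\mathbb Q} q_i$; since $G$ is a filter there is $p\in G$ with $p\le_{\mathbb P}p_0,p_1$, and then by the first (monotonicity) clause of Definition \ref{weakproj} we get $\pi(p)\le_{\mathbb Q}\pi(p_0),\pi(p_1)\le_{\mathbb Q}q_0,q_1$, so $\pi(p)\in H$ witnesses a common lower bound. (That $H$ is nonempty and proper is immediate: $\pi$ of any element of $G$ lies in $H$, and no element of $G$ can be mapped below a contradiction since $\le_{\mathbb Q}$ has no such element.)

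The substantive part is genericity: given a dense $D\subseteq\mathbb Q$ lying in $V$, I must find an element of $H$ in $D$. The idea is to pull $D$ back through $\pi$ in a way that produces a \emph{dense} subset of $\mathbb P$, using the second clause of Definition \ref{weakproj}. Fix an arbitrary $p\in\mathbb P$; I want to find $p'\le_{\mathbb P}p$ with $\pi(p')\in D$. Apply the weak-projection property to $p$ to obtain $p^\ast\le_{\mathbb P}p$ such that every $q\le_{\mathbb Q}\pi(p^\ast)$ has a $\mathbb P$-preimage below $p$. Now use density of $D$ in $\mathbb Q$ to pick $q\in D$ with $q\le_{\mathbb Q}\pi(p^\ast)$. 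By the choice of $p^\ast$ there is $p'\le_{\mathbb P}p$ with $\pi(p')\le_{\mathbb Q}q$; since $D$ is downward... — here one must be slightly careful, because $D$ is only assumed \emph{dense}, not open. So instead I would not insist $\pi(p')\in D$, but rather record that $\pi(p')\le_{\mathbb Q}q\in D$; the set $E=\{p'\in\mathbb P:\exists q\in D\ \pi(p')\le_{\mathbb Q}q\}$ is then dense in $\mathbb P$ and lies in $V$, so $G\cap E\neq\varnothing$. Taking $p'\in G\cap E$ with witness $q\in D$, we get $q\in H$ (as $\pi(p')\le_{\mathbb Q}q$ and $p'\in G$), hence $H\cap D\neq\varnothing$, as desired.

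I expect the main obstacle to be exactly this mismatch between the quantifier structure of the weak-projection axiom and the definition of density: the second clause of Definition \ref{weakproj} lets one find a condition in $\mathbb P$ whose image is \emph{below} a prescribed $q$, not equal to an element of $D$, so one cannot directly say ``$\pi$-preimage of $D$ is dense.'' The fix is the auxiliary dense set $E$ above, which absorbs this discrepancy. One should also double-check that $E\in V$: this holds because $\mathbb P$, $\mathbb Q$, $\pi$, $\le_{\mathbb Q}$ and $D$ are all in $V$, and $E$ is defined by a formula with these parameters. Everything else is bookkeeping with the two clauses of the definition of weak projection.
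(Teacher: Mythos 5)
Your proof is correct and is essentially the standard argument (the paper itself only cites Foreman--Woodin, Proposition 2.8, where this is how it is done): the auxiliary dense set $E=\{p'\in\mathbb P:\exists q\in D\ \pi(p')\le_{\mathbb Q}q\}$ is exactly the right device for reconciling the weak-projection clause with mere density of $D$, and upward closure of $H$ then delivers the witness $q\in H\cap D$. The filter verification via monotonicity is also as expected.
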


\begin{proof}
   The proof of this lemma can be found in \cite[Proposition 2.8]{FW}.
\end{proof}

As a convention, following Lemma \ref{filterweak}, we identify $\pi''G$ with its upwards closure.

\begin{example}\label{lévy} Let us consider the case of the Lévy collapse. Namely, let $\Coll(\omega, {<}\lambda)$ be the set of all partial maps from $\omega$ to some $\alpha<\lambda$, ordered by reverse inclusion. In the forcing extension by $\Coll(\omega, {<}\lambda)$, $\lambda$ is collapsed gently to $\omega_1$, in the sense that every ordinal below $\lambda$ becomes countable. On the other hand, let $\Coll(\omega, {\le}\alpha)$ be the set of all partial functions from $\omega$ to $\alpha$, again with the reverse inclusion ordering. In the extension by $\Coll(\omega, {\le}\alpha)$, the ordinal $\alpha$ becomes countable. Next, if $\alpha<\lambda$, we observe that \[\Coll(\omega,{\le}\alpha)\ast \Coll(\hacek\omega,{<}\hacek\lambda)\simeq\Coll(\omega,{<}\lambda),\] hence there is a (strong) projection $\ell_{\lambda,\alpha}:\Coll(\omega,{<}\lambda)\to\Coll(\omega,{\le}\alpha)$.\end{example}

A concept in some ways dual to projections is the one of \emph{complete embeddings}:

\begin{definition}
    If $\langle \mathbb P, \le_{\mathbb P}, \mathbb 1_{\mathbb P}\rangle$ and $\langle\mathbb Q, \le_{\mathbb Q}, \mathbb 1_{\mathbb Q}\rangle$ are forcing posets, a map $\imath:\mathbb Q\to\mathbb P$ is a \emph{complete embedding} if the following conditions hold:
    \begin{itemize}
        \item for all $q$ and $q'$ in $\mathbb Q$, if $q\le_\mathbb Q q'$, then $\imath(q)\le_\mathbb P \imath(q')$;
        \item for all $q$ and $q'$ in $\mathbb Q$, if $q$ and $q'$ are incompatible in $\mathbb Q$, then $\imath(q)$ and $\imath(q')$ are incompatible in $\mathbb P$;
        \item for every $p\in\mathbb P$, there exists $q\in\mathbb Q$ such that $p$ and $\imath(q)$ are compatible in $\mathbb P$.
    \end{itemize}
\end{definition}

\begin{lemma}
    If $\imath:\mathbb Q\to\mathbb P$ is a complete embedding between forcing posets, and $G$ is a $\mathbb P$-generic filter over $V$, then $\imath^{-1}[G]$ is a $\mathbb Q$-generic filter over $V$.
\end{lemma}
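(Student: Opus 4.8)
The plan is to run the classical argument (cf.\ \cite{Kunen}). I would set $H:=\imath^{-1}[G]=\{q\in\mathbb Q:\imath(q)\in G\}$ and verify two things: that $H$ is a filter on $\mathbb Q$, and that $H$ meets every dense subset of $\mathbb Q$ lying in $V$. The non-genericity part is pure bookkeeping. Upward closure: if $q\in H$ and $q\le_{\mathbb Q}q'$ then $\imath(q)\le_{\mathbb P}\imath(q')$ by the first clause of the definition of complete embedding, and since $\imath(q)\in G$ and $G$ is upward closed we get $\imath(q')\in G$, i.e.\ $q'\in H$. Moreover $H$ contains no two incompatible elements: if $q,q'\in H$ were incompatible in $\mathbb Q$, then $\imath(q)\perp_{\mathbb P}\imath(q')$ by the second clause, contradicting that $\imath(q),\imath(q')$ both belong to the filter $G$. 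Directedness of $H$ (and, with it, $H\neq\emptyset$ and $\mathbb 1_{\mathbb Q}\in H$) I would deduce \emph{afterwards} from genericity: given $q_{1},q_{2}\in H$, the set of those $r\in\mathbb Q$ that are either common refinements of $q_{1}$ and $q_{2}$ or incompatible with one of the two is dense in $\mathbb Q$ and lies in $V$, so $H$ meets it at some $r$; by the ``no incompatible pair'' property, $r$ cannot be incompatible with $q_{1}$ or $q_{2}$, hence $r\le_{\mathbb Q}q_{1}$ and $r\le_{\mathbb Q}q_{2}$.

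The heart of the matter is genericity. Given $D\subseteq\mathbb Q$ dense with $D\in V$, I would pass to
\[
  E:=\bigl\{\,p\in\mathbb P:\ \exists q\in D\ \bigl(p\le_{\mathbb P}\imath(q)\bigr)\,\bigr\}\in V,
\]
and show that $E$ is dense in $\mathbb P$. Given $p\in\mathbb P$, take a reduction $q_{0}\in\mathbb Q$ of $p$ — that is, one such that every $q'\le_{\mathbb Q}q_{0}$ has $\imath(q')$ compatible with $p$, which is the content of the third clause of the definition — and then use density of $D$ to pick $q\in D$ with $q\le_{\mathbb Q}q_{0}$. Since $\imath(q)$ is compatible with $p$, some $p'\le_{\mathbb P}\imath(q),p$ exists; then $p'\le_{\mathbb P}p$ and $p'\in E$, so $E$ is dense. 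As $G$ is $\mathbb P$-generic over $V$ and $E\in V$, choose $p'\in G\cap E$ with witness $q\in D$ satisfying $p'\le_{\mathbb P}\imath(q)$; upward closure of $G$ then forces $\imath(q)\in G$, i.e.\ $q\in H\cap D$, as desired.

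The step I expect to be the real obstacle is the density of $E$ in the second paragraph: this is precisely where one must use the third clause of the definition of complete embedding in its full ``reduction'' strength (equivalently, that $\imath$ carries maximal antichains of $\mathbb Q$ to maximal antichains of $\mathbb P$), and not merely the bare existence, for each $p$, of some $q$ with $\imath(q)$ compatible with $p$. Everything else is routine filter manipulation and could, if preferred, simply be quoted from \cite{Kunen}.
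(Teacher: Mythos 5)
Your proof is correct and is precisely the classical argument that the paper delegates to Kunen (Lemma IV.4.2): verify the filter properties from the first two clauses, derive directedness from genericity, and reduce genericity of $\imath^{-1}[G]$ to the density of $E=\{p\in\mathbb P:\exists q\in D\ (p\le_{\mathbb P}\imath(q))\}$. One point deserves emphasis, and you put your finger on it yourself: the density of $E$ needs the third clause in its \emph{reduction} form (some $q_0$ such that $\imath(q')$ is compatible with $p$ for \emph{every} $q'\le_{\mathbb Q}q_0$), whereas the paper's Definition as literally written only asserts that for each $p$ there is \emph{some} $q$ with $\imath(q)$ compatible with $p$, i.e.\ mere predensity of the image. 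Under that literal reading the lemma is false (take $\mathbb P$ to be three pairwise incompatible cones below a common top and $\mathbb Q$ the union of two of them with the top: the image is predense and incompatibility is preserved, yet a generic concentrating on the third cone pulls back to the trivial filter). So your argument is correct relative to Kunen's actual definition, which is the one the paper evidently intends; the discrepancy is a slip in the paper's transcription of the third clause, not a gap in your proof.
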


\begin{proof}
    The proof of this classical lemma can be found in \cite[Lemma IV.4.2]{Kunen}.
\end{proof}

Given a forcing poset $\mathbb P$, we can consider its Boolean completion, the \emph{regular open algebra}, where we remove the least element $\mathbb 0_{\mathbb P}$. The bottomless regular open algebra is denoted by $\mathrm{RO}(\mathbb P)$. For sake of simplicity, we identify the forcing poset $\mathbb P$ with its isomorphic copy inside $\mathrm{RO}(\mathbb P)$. The forcing posets $\mathbb P$ and $\mathrm{RO}(\mathbb P)$ are always forcing equivalent, as $\mathbb P$ is dense in $\mathrm{RO}(\mathbb P)$ and every $\mathbb P$-name is also a $\mathrm{RO}(\mathbb P)$-name.\medskip

 If $\mathbb Q$ is another forcing poset and $\varrho:\mathrm{RO}(\mathbb P)\to\mathrm{RO}(\mathbb Q)$ is a projection, then also the restriction of $\varrho$ to $\mathbb P$ is a projection. Similarly, by density, if $\imath:\mathbb P\to\mathrm{RO}(\mathbb Q)$ is a complete embedding, it can be extended in an unique way to an embedding $\imath:\mathrm{RO}(\mathbb P)\to\mathrm{RO}(\mathbb Q)$, by defining
\[\imath(b):=\bigvee\{\imath(p): p\in\mathbb P,\ p\le b\}.\]

\begin{lemma}\label{26}
   If $G$ is $\mathrm{RO}(\mathbb P)$-generic over $V$, then $G\cap\mathbb P$ is $\mathbb P$-generic over $V$. On the other hand,
if $G$ is $\mathbb P$-generic over $V$, its upwards closure in $\mathrm{RO}(\mathbb P)$ is $\mathrm{RO}(\mathbb P)$-generic over $V$. 

\end{lemma}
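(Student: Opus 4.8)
The plan is to reduce both halves of the statement to results already in hand, by observing that the inclusion map $\iota:\mathbb P\hookrightarrow\mathrm{RO}(\mathbb P)$ coming from the identification of $\mathbb P$ with a dense suborder of $\mathrm{RO}(\mathbb P)$ is simultaneously a projection and a complete embedding. Everything will hinge on the single fact that $\mathbb P$ is dense in $\mathrm{RO}(\mathbb P)$.

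First I would verify that $\iota$ is a projection in the sense of Definition \ref{weakproj}. Monotonicity is immediate because $\le_{\mathbb P}$ is the restriction to $\mathbb P$ of the order of $\mathrm{RO}(\mathbb P)$, and $\iota(\mathbb 1_{\mathbb P})=\mathbb 1_{\mathrm{RO}(\mathbb P)}$ holds under the identification. For the second clause one takes $p^\ast=p$: given $p\in\mathbb P$ and $b\le b$ in $\mathrm{RO}(\mathbb P)$ with $b\le_{\mathrm{RO}(\mathbb P)}\iota(p)=p$, density of $\mathbb P$ produces $p'\in\mathbb P$ with $p'\le_{\mathrm{RO}(\mathbb P)} b$, and then $\iota(p')=p'\le_{\mathrm{RO}(\mathbb P)} b$. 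Applying Lemma \ref{filterweak} to $\iota$ then yields that whenever $G$ is $\mathbb P$-generic over $V$, the upward closure of $\iota''G=G$ inside $\mathrm{RO}(\mathbb P)$ is $\mathrm{RO}(\mathbb P)$-generic over $V$; this is precisely the second assertion.

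Next I would check that $\iota$ is a complete embedding. Monotonicity is again clear; two conditions $p,q\in\mathbb P$ are incompatible in $\mathbb P$ exactly when they have no common lower bound in $\mathbb P$, and by density of $\mathbb P$ this happens exactly when they have no common lower bound in $\mathrm{RO}(\mathbb P)$, i.e.\ exactly when $p\wedge q=\mathbb 0_{\mathbb P}$, which is to say that $\iota(p)$ and $\iota(q)$ are incompatible in $\mathrm{RO}(\mathbb P)$; and for every $b\in\mathrm{RO}(\mathbb P)$, density produces $p\in\mathbb P$ with $p\le_{\mathrm{RO}(\mathbb P)} b$, so $\iota(p)=p$ is compatible with $b$. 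Applying the lemma on complete embeddings stated above to $\iota$ and to an $\mathrm{RO}(\mathbb P)$-generic filter $G$ gives that $\iota^{-1}[G]=G\cap\mathbb P$ is $\mathbb P$-generic over $V$, which is the first assertion.

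The only point demanding a little care — rather than a genuine obstacle — is the translation of incompatibility between $\mathbb P$ and $\mathrm{RO}(\mathbb P)$, together with the bookkeeping around the ``bottomless'' convention (the element $\mathbb 0_{\mathbb P}$ has been removed from $\mathrm{RO}(\mathbb P)$, so incompatibility there means ``meet equal to $\mathbb 0_{\mathbb P}$''); both are handled by density of $\mathbb P$. Should a self-contained argument be preferred, the same two density remarks — that a dense subset of $\mathbb P$ remains dense in $\mathrm{RO}(\mathbb P)$, and that pulling back a dense subset $D\subseteq\mathrm{RO}(\mathbb P)$ along $\iota$, after first descending into $\mathbb P$, yields a dense subset of $\mathbb P$ — deliver both directions directly, using only that a generic filter is a filter meeting every dense set.
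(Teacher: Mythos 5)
Your argument is correct. The paper itself does not give a proof: it simply cites Kunen's Lemma IV.4.7, whose standard proof is the direct density argument you sketch in your closing paragraph (a dense subset of $\mathbb P$ stays predense in $\mathrm{RO}(\mathbb P)$, and a dense subset of $\mathrm{RO}(\mathbb P)$ can be refined into a dense subset of $\mathbb P$). Your main route is genuinely different in flavour: you observe that the dense inclusion $\iota:\mathbb P\hookrightarrow\mathrm{RO}(\mathbb P)$ is simultaneously a projection in the sense of Definition \ref{weakproj} and a complete embedding, and then you outsource the two transfers of genericity to Lemma \ref{filterweak} and to the lemma on complete embeddings. Both verifications are right, and each uses only the density of $\mathbb P$ in $\mathrm{RO}(\mathbb P)$ together with the bottomless convention (so that ``$b\neq\mathbb 0$'' is automatic and density always produces a condition below $b$). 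What this buys you is economy within the paper's own framework --- no new $\varepsilon$-of-density bookkeeping, just two already-stated black boxes --- at the cost of being slightly more roundabout than the two-line direct argument. One point worth making explicit (you gesture at it): the step ``$r\le p$ in $\mathrm{RO}(\mathbb P)$ implies $r\le_{\mathbb P}p$'' relies on the paper's convention that $\mathbb P$ is identified with an \emph{isomorphic} (hence order-reflecting) copy inside $\mathrm{RO}(\mathbb P)$; modulo that convention, which the paper adopts just before the lemma, your incompatibility translation is sound. Also note the typo ``$b\le b$'' where you mean ``$b\in\mathrm{RO}(\mathbb P)$''.
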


\begin{proof}
        The proof of this lemma can be found in \cite[Lemma IV.4.7]{Kunen}.
\end{proof}

Forcing with the Boolean algebra $\mathrm{RO}(\mathbb P)$ instead of $\mathbb P$ has the advantage that for every sentence $\varphi$ in the forcing language (namely, every set-theoretic formula having only $\mathbb P$-names as parameters) that is forced by some $p\in\mathbb P$, we can always consider the \emph{weakest} condition in $\mathrm{RO}(\mathbb P)$ that forces it, called its \emph{Boolean value}. Namely, we can define it as
\[\llbracket \varphi\rrbracket:=\bigvee \{p\in\mathbb P: p\Vdash_{\mathbb P} \varphi\}.\]
For more facts about Boolean-valued forcing, we refer to John Bell's book \cite{Bell}.

\begin{definition}
    Let $\mathbb P$ and $\mathbb Q$ be forcings and let $\dot g$ be a $\mathbb P$-name such that \[\mathbb 1_{\mathbb P}\Vdash_{\mathbb P} ``\dot g\ \text{is}\ \hacek{\mathbb Q}\text{-generic}".\]
    Then the function $\imath:\mathbb Q\to\mathrm{RO}(\mathbb P)$ defined by $\imath(q):=\llbracket \hacek q\in \dot g\rrbracket$ for all $q\in\mathbb Q$ is called the \emph{complete embedding induced by $\dot g$}. Thanks to the argument mentioned just above Lemma \ref{26}, this can be extended to a complete embedding, that we still call $\imath$, from $\mathrm{RO}(\mathbb Q)$ to $\mathrm{RO}(\mathbb P)$.
\end{definition}

As we stated earlier, projections and complete embeddings are closely entwined:

\begin{lemma}\label{karagila}
Let $\mathbb P$ and $\mathbb Q$ be forcing posets. If $\imath:\mathbb Q\to\mathbb P$ is a complete embedding, then there exists a projection $\varrho: \mathbb P\to \mathrm{RO}(\mathbb Q)$ with $\varrho\circ \imath=\id_{\mathbb Q}$, defined by
\[\varrho(p):=\bigvee\{q\in\mathbb Q: \imath(q)\le_{\mathbb P}p\}.\] If $\varrho:\mathbb P\to\mathbb Q$ is a projection, then there is a complete embedding $\imath:\mathbb Q\to\mathrm{RO}(\mathbb P)$:
\[\imath(q):=\bigvee\{p\in\mathbb P: \varrho(p)\le_{\mathbb Q} q\}.\]
Notice that, in this case, the identity $\imath\circ \varrho=\id_{\mathbb P}$ does not generally hold.
\end{lemma}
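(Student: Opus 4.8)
The plan is to read the two axiom lists straight off the displayed formulas, doing every computation inside the complete Boolean algebras $\mathrm{RO}(\mathbb{P})$ and $\mathrm{RO}(\mathbb{Q})$, using infinite distributivity there together with the density of $\mathbb{P}$ in $\mathrm{RO}(\mathbb{P})$ and of $\mathbb{Q}$ in $\mathrm{RO}(\mathbb{Q})$; as elsewhere we identify each poset with its copy in its regular open algebra, so that $\le$ and incompatibility are always read inside those algebras. The two defining formulas are exactly those of a Galois connection, and the content of the lemma is that the extra axioms — preservation of incompatibility on one side, the refinement clause on the other — survive the passage.

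\emph{First assertion.} Let $\imath:\mathbb{Q}\to\mathbb{P}$ be a complete embedding and $\varrho(p)=\bigvee\{q\in\mathbb{Q}:\imath(q)\le p\}$. Monotonicity of $\varrho$ is immediate from the formula and $\varrho(\mathbb{1}_\mathbb{P})=\bigvee\mathbb{Q}=\mathbb{1}_{\mathrm{RO}(\mathbb{Q})}$ by density. The key preliminary fact is $\varrho(\imath(q))=q$ for every $q\in\mathbb{Q}$: the inequality $q\le\varrho(\imath(q))$ is trivial, and for the reverse one I would show that any $q'$ with $\imath(q')\le\imath(q)$ satisfies $q'\le q$ — if not, then $q'\wedge\neg q\ne 0$ in $\mathrm{RO}(\mathbb{Q})$, so by density some $q''\in\mathbb{Q}$ lies below it, giving $q''\le q'$ with $q''\perp q$; then $\imath(q'')\le\imath(q')\le\imath(q)$ by monotonicity while $\imath(q'')\perp\imath(q)$ because $\imath$ preserves incompatibility, a contradiction. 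For the projection clause with $p^\ast=p$: given $p\in\mathbb{P}$ and a nonzero $b\le\varrho(p)$ in $\mathrm{RO}(\mathbb{Q})$, distributivity gives $b=\bigvee\{\,b\wedge q:q\in\mathbb{Q},\ \imath(q)\le p\,\}\ne 0$, so $b\wedge q\ne 0$ for some $q$ with $\imath(q)\le p$; choosing $q_1\in\mathbb{Q}$ below $b\wedge q$ and setting $p'=\imath(q_1)$ gives $p'\le\imath(q)\le p$ and $\varrho(p')=\varrho(\imath(q_1))=q_1\le b$. Hence $\varrho$ is a projection and $\varrho\circ\imath=\id_\mathbb{Q}$.

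\emph{Second assertion.} Let $\varrho:\mathbb{P}\to\mathbb{Q}$ be a projection and $\imath(q)=\bigvee\{p\in\mathbb{P}:\varrho(p)\le q\}$; monotonicity is again immediate. First, $\imath(q)\ne 0$ (so that $\imath$ really lands in $\mathrm{RO}(\mathbb{P})$): since $q\le\mathbb{1}_\mathbb{Q}=\varrho(\mathbb{1}_\mathbb{P})$, the projection clause applied to $\mathbb{1}_\mathbb{P}$ yields $p'\le\mathbb{1}_\mathbb{P}$ with $\varrho(p')\le q$, so $\imath(q)\ge p'>0$. For preservation of incompatibility, suppose $q\perp q'$ in $\mathbb{Q}$ but $\imath(q)\wedge\imath(q')\ne 0$; distributivity produces $p,p'$ with $\varrho(p)\le q$, $\varrho(p')\le q'$, $p\wedge p'\ne 0$, and taking $p''\in\mathbb{P}$ below $p\wedge p'$ gives $\varrho(p'')\le\varrho(p)\le q$ and $\varrho(p'')\le\varrho(p')\le q'$, so $\varrho(p'')$ is a common lower bound of $q$ and $q'$ in $\mathbb{Q}$ — contradiction. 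For the third clause, given nonzero $b\in\mathrm{RO}(\mathbb{P})$ pick $p\in\mathbb{P}$ with $p\le b$; then $p\le\imath(\varrho(p))$ because $p$ is among the conditions joined in the definition of $\imath(\varrho(p))$, so $p$ witnesses that $b$ and $\imath(\varrho(p))$ are compatible. Thus $\imath$ is a complete embedding. Finally $\imath\circ\varrho=\id_\mathbb{P}$ fails as soon as $\varrho$ is not injective — e.g.\ if $\mathbb{Q}$ is trivial and $\mathbb{P}$ is not: if $\varrho(p_1)=\varrho(p_2)$ with $p_1\ne p_2$, then $\imath(\varrho(p_1))=\imath(\varrho(p_2))$ cannot equal both.

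The monotonicity checks, the identity $\varrho(\mathbb{1}_\mathbb{P})=\mathbb{1}$, and the various density appeals are routine; the one step that needs genuine care is the projection clause in the first assertion, where the given $b\le\varrho(p)$ must first be expanded by distributivity to locate a $q$ with $\imath(q)\le p$, and only then refined within $\mathbb{Q}$ to a condition whose $\imath$-image is the wanted extension of $p$. That, together with keeping all the arithmetic inside the Boolean completions, is the only real obstacle; everything else is bookkeeping.
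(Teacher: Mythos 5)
The paper gives no argument of its own for this lemma (it cites Karagila's notes), so you are supplying a proof where the paper supplies a reference; your adjoint-style computations inside the Boolean completions are the standard route. The second half of your argument, producing the complete embedding $\imath(q)=\bigvee\{p:\varrho(p)\le_{\mathbb Q}q\}$ from a projection $\varrho$, is essentially correct — including the check that $\imath(q)\neq\mathbb 0$, which is exactly the kind of non-degeneracy one must verify when the target is a \emph{bottomless} algebra. (A minor caveat running through both halves: you repeatedly pass from ``$q''\le q'$ in $\mathrm{RO}(\mathbb Q)$'' to ``$\imath(q'')\le\imath(q')$'', which needs $q''\le_{\mathbb Q}q'$; this is harmless for separative posets, or after replacing $q''$ by a common $\mathbb Q$-extension of $q''$ and $q'$, but it should be said.)

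The first half has a genuine gap, and it sits precisely at the step you omit: you never verify that $\varrho(p)=\bigvee\{q\in\mathbb Q:\imath(q)\le_{\mathbb P}p\}$ is nonzero, i.e.\ that the defining set is nonempty — and in general it is not. Take $\mathbb P=\mathbb Q\times\mathbb Q$ with the coordinatewise order and $\imath(q)=\langle q,\mathbb 1_{\mathbb Q}\rangle$: this is a complete embedding, but for $p=\langle\mathbb 1_{\mathbb Q},r\rangle$ with $r\neq\mathbb 1_{\mathbb Q}$ no $\imath(q)$ lies below $p$, so $\varrho(p)=\bigvee\varnothing$ is the zero of the full completion and not an element of the bottomless algebra $\mathrm{RO}(\mathbb Q)$. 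Your verification of the projection clause is vacuous at exactly these conditions, which is why the problem never surfaces in your write-up. The root cause is that the displayed formula is the wrong adjoint: the projection associated with a complete embedding is $\varrho(p)=\bigwedge\{q:p\le\imath(q)\}$ (computed in $\mathrm{RO}(\mathbb Q)$ after extending $\imath$ to the completions) — this is the formula in Karagila's Theorem~2.25 and the one the paper itself uses in Lemma~\ref{fhs}, and in the product example it correctly returns the first coordinate. With the infimum formula, non-vanishing is automatic (if $\varrho(p)$ were zero then $p\le\imath(\mathbb 0)$), and your arguments for $\varrho\circ\imath=\id_{\mathbb Q}$ and for the $p^\ast=p$ refinement clause adapt after being rewritten for the meet; as written, however, the first assertion is not proved, because the map you define is not total on $\mathbb P$.
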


\begin{proof}
A good proof of this well-known fact is found in \cite[Theorem 2.25]{Karagila}.
\end{proof}

Hence, the existence of a projection between two Boolean algebras is equivalent to the existence of a complete embedding between them but in the other direction.

\begin{definition}
If $\mathbb P$ is a forcing poset and $p\in\mathbb P$, we write
\[\mathbb P_{\downarrow p}:=\{q\in \mathbb P: q\le_{\mathbb P} p\}.\]
\end{definition}

\begin{lemma}\label{fhs}
    Let $\mathbb P$ and $\mathbb Q$ be forcing posets. Let $p\in\mathbb P$ and $\dot g$ be a $\mathbb P$-name with
    \[p\Vdash_{\mathbb P} ``\dot g\ \text{is}\ \hacek{\mathbb Q}\text{-generic}".\]
    Consider the map $\varrho:\mathbb P_{\downarrow p}\to \mathrm{RO}(\mathbb Q)$ defined, for $p'\le_{\mathbb P} p$, as
    \[\varrho(p'):=\bigwedge \{q\in\mathbb Q: p'\Vdash_{\mathbb P} (\hacek q\in \dot g)\}.\]
    Then $\varrho$ is a projection from $\mathbb P_{\downarrow p}$ to $\mathrm{RO}(\mathbb Q)_{\downarrow \varrho(p)}$, as well as its extension to $\mathrm{RO}(\mathbb P)_{\downarrow p}$ that can be constructed by the usual density argument. Moreover, for each $\mathbb P$-generic filter $G$ over $V$ with $p\in G$, the upwards closure in $\mathrm{RO}(\mathbb Q)$ of $\varrho''G$ is given by
    \[\{b\in\mathrm{RO}(\mathbb Q): \exists q\in i_G(\dot g): (q\le b)\}.\]
\end{lemma}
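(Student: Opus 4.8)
The plan is to verify the three clauses of the definition of a projection (Definition~\ref{weakproj}) for $\varrho$ directly from the forcing relation, and then to compute $\varrho''G$ by unwinding the definitions. Throughout write $D_{p'}:=\{q\in\mathbb Q:p'\Vdash_{\mathbb P}\check q\in\dot g\}$, so that $\varrho(p')=\bigwedge D_{p'}$ computed in $\mathrm{RO}(\mathbb Q)$; note $\mathbb 1_{\mathbb Q}\in D_{p'}$ (as $\dot g$ is forced to be a filter) and $D_{p'}$ is upward closed.

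First I would check that $\varrho$ lands in $\mathrm{RO}(\mathbb Q)$, i.e.\ that $\varrho(p')\neq\mathbb 0$. If $\bigwedge D_{p'}=\mathbb 0$, then $\{q\in\mathbb Q:q\perp d\text{ for some }d\in D_{p'}\}$ is dense in $\mathbb Q$ (a $q_1$ with no incompatible member of $D_{p'}$ below it would be a lower bound of $D_{p'}$, hence $\mathbb 0$); but $p'$ forces $\check D_{p'}\subseteq\dot g$ and forces $\dot g$ to be a filter meeting every ground-model dense set, so $p'$ would force two incompatible conditions into $\dot g$ — impossible. Monotonicity is immediate: $p''\le_{\mathbb P}p'$ gives $D_{p'}\subseteq D_{p''}$, hence $\varrho(p'')\le\varrho(p')$; in particular $\varrho$ maps into $\mathrm{RO}(\mathbb Q)_{\downarrow\varrho(p)}$ and carries the top $p$ to the top $\varrho(p)$.

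The crux is the lifting clause: given $p'\le_{\mathbb P}p$ and $\mathbb 0\neq b\le_{\mathrm{RO}(\mathbb Q)}\varrho(p')$, I must produce $p''\le_{\mathbb P}p'$ with $\varrho(p'')\le b$. By density of $\mathbb Q$ in $\mathrm{RO}(\mathbb Q)$ pick $q_0\in\mathbb Q$ with $q_0\le b$; then $q_0\le d$ for every $d\in D_{p'}$. It suffices to produce $p''\le_{\mathbb P}p'$ with $p''\Vdash_{\mathbb P}\check q_0\in\dot g$, for then $q_0\in D_{p''}$ and $\varrho(p'')=\bigwedge D_{p''}\le q_0\le b$. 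To find such a $p''$ I would argue by contradiction: if no extension of $p'$ forces $\check q_0$ into $\dot g$, then $p'\Vdash\check q_0\notin\dot g$; since $\dot g$ is forced $\mathbb Q$-generic and $\{q:q\le q_0\}\cup\{q:q\perp q_0\}$ is dense in $\mathbb Q$, $p'$ forces $\dot g$ to meet it at some $q'\perp q_0$, and a witness gives $p''\le_{\mathbb P}p'$ with $q'\in D_{p''}$; the contradiction should then come from $q_0\le\bigwedge D_{p'}$ together with the fact that $p''$ forces $q'$ and all of $\check D_{p'}$ into the single filter $\dot g$. I expect extracting this contradiction to be the main obstacle, since $D_{p''}$ may properly contain $D_{p'}$ and one only directly controls the interaction of $q'$ with $D_{p'}$; if the bare argument does not close, the robust alternative is to pass through the complete embedding $\imath\colon\mathrm{RO}(\mathbb Q)\to\mathrm{RO}(\mathbb P)_{\downarrow p}$, $\imath(q)=\llbracket\check q\in\dot g\rrbracket$, induced by $\dot g$, and to invoke Lemma~\ref{karagila}, after comparing the explicit formulas, to recognise $\varrho$ as the canonical projection attached to $\imath$. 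The extension of $\varrho$ to $\mathrm{RO}(\mathbb P)_{\downarrow p}$ would then be the usual supremum over conditions lying below a given element, still a projection by a density argument.

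Finally, for the formula for $\varrho''G$, fix a $\mathbb P$-generic $G$ with $p\in G$. The set $\imath^{-1}[G]$ is the $\mathrm{RO}(\mathbb Q)$-generic filter induced by $\imath$, and since $\imath^{-1}[G]\cap\mathbb Q=i_G(\dot g)$ it equals $\{b\in\mathrm{RO}(\mathbb Q):\exists q\in i_G(\dot g)\ (q\le b)\}$. For $p'\in G$ below $p$ and $d\in D_{p'}$ we have $p'\le\imath(d)$, so $p'\le\bigwedge_{d\in D_{p'}}\imath(d)=\imath(\varrho(p'))$ since $\imath$ preserves infima, whence $\imath(\varrho(p'))\in G$, i.e.\ $\varrho(p')\in\imath^{-1}[G]$; this puts $\varrho''G$, hence its upward closure, inside the displayed set. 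Conversely, if $q\in i_G(\dot g)$ and $q\le b$, some $p''\in G$ below $p$ forces $\check q\in\dot g$, so $q\in D_{p''}$ and $\varrho(p'')\le q\le b$, placing $b$ in the upward closure of $\varrho''G$. Equality follows.
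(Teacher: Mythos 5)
Your verification of $\varrho(p')\neq\mathbb 0$, of monotonicity, and your computation of the upward closure of $\varrho''G$ are all sound. But the obstacle you flag at the lifting clause is a genuine gap, and neither of your two routes closes it. The direct route fails exactly where you suspect: from $p''\Vdash\check q'\in\dot g$ with $q'\perp q_0$ you only learn that $q'$ is compatible with every member of $D_{p'}$, which is perfectly consistent with $q_0\le\bigwedge D_{p'}$ (take $D_{p'}=\{\mathbb 1_{\mathbb Q}\}$). The fallback also fails: the canonical projection attached to $\imath$ --- whether written as $\bigvee\{q\in\mathbb Q:\imath(q)\le p'\}$ as in Lemma \ref{karagila}, or as the least element of the complete subalgebra $\imath[\mathrm{RO}(\mathbb Q)]$ above $p'$, i.e.\ $\bigwedge\{b\in\mathrm{RO}(\mathbb Q):p'\le\imath(b)\}$ --- is in general \emph{strictly below} $\bigwedge\{q\in\mathbb Q:p'\le\imath(q)\}$, because an element of $\mathrm{RO}(\mathbb Q)$ need not be the infimum of the poset elements above it. So ``comparing the explicit formulas'' does not identify $\varrho$ with that projection, and the two maps genuinely differ.

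In fact the strong lifting clause of Definition \ref{weakproj} can outright fail for $\varrho$ as defined, so no argument from the stated hypotheses alone can close your gap. Take $\mathbb Q=2^{<\omega}$, $\mathbb P=\mathrm{RO}(\mathbb Q)\setminus\{\mathbb 0\}$, $\dot g$ the canonical name for the induced $\mathbb Q$-generic filter (Lemma \ref{26}), and $p=\mathbb 1$. Here $\llbracket\check q\in\dot g\rrbracket=q$, so for $p':=\langle 0,0\rangle\vee\langle 1,1\rangle$ one gets $D_{p'}=\{q\in 2^{<\omega}:p'\le q\}=\{\langle\rangle\}$ and $\varrho(p')=\mathbb 1$; yet for $b:=\langle 0,1\rangle\le\varrho(p')$ there is no $p''\le p'$ with $\varrho(p'')\le b$, since in this example $p''\le\varrho(p'')$ always holds while $p''\le p'$ is incompatible with $\langle 0,1\rangle$. (Only the \emph{weak} projection property survives: first shrink $p'$ to some $p^\ast$ forcing a specific $\check q^\ast\in\dot g$ with $\varrho(p^\ast)=q^\ast$.) The paper itself offers no proof to compare against --- it cites \cite[Lemma 2.1]{FHS} --- but the upshot for your write-up is that proving the displayed statement requires either computing the infimum over all of $\mathrm{RO}(\mathbb Q)$ (where your subalgebra/reduction strategy does work verbatim), or exploiting extra structure of $\dot g$ and $\mathbb P$ beyond what is assumed; your proposal as written cannot be completed.
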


\begin{proof}
    The proof of this lemma can be found in \cite[Lemma 2.1]{FHS}.
\end{proof}

In the situation described in Lemma \ref{fhs}, we refer to the map \[\varrho:\mathrm{RO}(\mathbb P)_{\downarrow p}\to\mathrm{RO}(\mathbb Q)_{\downarrow \varrho(p)}\] as \emph{the projection induced by $\dot g$ and $p$}. We stated Lemma \ref{fhs} primarily to obtain a specific corollary, which is used repeatedly throughout the paper, and is as follows. Let $\pi:\mathbb P\to\mathbb Q$ be a weak projection, and let $\Gamma$ be the standard $\mathbb P$-name for a generic filter, namely,
\[\Gamma:=\{\langle \hacek p,p\rangle: p\in\mathbb P\}.\]
Then, clearly, any $p\in\mathbb P$ forces that $\Gamma$ is $\mathbb P$-generic, and, thanks to Lemma \ref{filterweak}, we also know that any $p\in\mathbb P$ forces that $\pi''\Gamma$ is $\mathbb Q$-generic. Hence, we can consider the projection induced by $\pi''\Gamma$ and $p$, which is of the form
\[\mathrm{IP}(\pi,p):\mathrm{RO}(\mathbb P)_{\downarrow p} \to \mathrm{RO}(\mathbb Q)_{\downarrow \pi(p)}.\]
Note that the \emph{induced projection} $\mathrm{IP}(\pi,p)$ only depends on the projection $\pi:\mathbb P\to\mathbb Q$ and on the choice of a specific element $p\in\mathbb P$, and nothing else, since $\Gamma$ is canonical.

\begin{definition}\label{wposbullet}
    Let $\mathsf{wPos}^\bullet$ denote the concrete, locally small category where the objects are ordered pairs $\langle \mathbb P,p\rangle$, with $\mathbb{P}$ a poset and $p \in \mathbb{P}$, while the morphisms from $\langle\mathbb P,p\rangle$ to $\langle \mathbb Q,q\rangle$ are weak projections $\pi:\mathbb P\to\mathbb Q$ such that $\pi(p)=q$.
\end{definition}

\begin{definition}\label{bcBa}
    Let $\mathsf{bcBa}$ denote the concrete, locally small category where the objects are bottomless complete Boolean algebras and the morphisms are projections between partial orders.
\end{definition}

\begin{lemma}\label{cat}
    Consider the mapping $\Psi:\mathsf{wPos}^\bullet\to\mathsf{bcBa}$ defined as follows. On objects, it acts as \[\Psi(\langle \mathbb P,p\rangle):=\mathrm{RO}(\mathbb P)_{\downarrow p},\]
    while on morphisms, it sends any weak projection $\pi:\langle \mathbb P,p\rangle\to\langle \mathbb Q,\pi(p)\rangle$ to
    \[\Psi(\pi):=\mathrm{IP}(\pi,p):\mathrm{RO}(\mathbb P)_{\downarrow p}\to\mathrm{RO}(\mathbb Q)_{\downarrow \pi(p)}=\Psi(\langle \mathbb P,p\rangle)\to\Psi(\langle \mathbb Q,\pi(p)\rangle).\]
    Thus, $\Psi$ is a functor from the category $\mathsf{wPos}^\bullet$ to the category $\mathsf{bcBa}$. 
\end{lemma}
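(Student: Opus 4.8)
The plan is to check the four things a functor must do: send objects of $\mathsf{wPos}^\bullet$ to objects of $\mathsf{bcBa}$, send morphisms to morphisms, preserve identities, and preserve composition. The object part is routine, since for any bottomless complete Boolean algebra $\mathbb B$ and any $b\in\mathbb B$ the principal segment $\mathbb B_{\downarrow b}$ is again a bottomless complete Boolean algebra (with top $b$, relativised complement $x\mapsto b\wedge\neg x$, and unchanged joins), so $\Psi(\langle\mathbb P,p\rangle)=\mathrm{RO}(\mathbb P)_{\downarrow p}$ is an object of $\mathsf{bcBa}$. For the morphism part, recall from the discussion preceding Definition \ref{wposbullet} that when $\pi:\mathbb P\to\mathbb Q$ is a weak projection every $p\in\mathbb P$ forces, by Lemma \ref{filterweak}, that $\pi''\Gamma$ is $\mathbb Q$-generic, so Lemma \ref{fhs} applies with $\dot g:=\pi''\Gamma$ and yields the induced projection $\mathrm{IP}(\pi,p):\mathrm{RO}(\mathbb P)_{\downarrow p}\to\mathrm{RO}(\mathbb Q)_{\downarrow\pi(p)}$; as $\pi(p)$ is by definition the distinguished point of $\langle\mathbb Q,\pi(p)\rangle$, this is a morphism of $\mathsf{bcBa}$ with domain $\Psi(\langle\mathbb P,p\rangle)$ and codomain $\Psi(\langle\mathbb Q,\pi(p)\rangle)$. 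I would also note in passing the two elementary closure facts — needed anyway for $\mathsf{wPos}^\bullet$ and $\mathsf{bcBa}$ to be categories — that a composite of weak projections is a weak projection and a composite of projections is a projection, both obtained by directly chasing the defining clauses.

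The key tool, which I would isolate first, is a \emph{uniqueness criterion} for projections between bottomless complete Boolean algebras: if $\varrho_1,\varrho_2:\mathbb B\to\mathbb C$ are projections and, for every $\mathbb B$-generic filter $\bar G$ over $V$, the filters on $\mathbb C$ generated by $\varrho_1''\bar G$ and by $\varrho_2''\bar G$ coincide, then $\varrho_1=\varrho_2$. The proof is by contraposition: if $\varrho_1(b)\not\le\varrho_2(b)$ (which we may assume by symmetry when $\varrho_1(b)\ne\varrho_2(b)$), applying the projection property of $\varrho_1$ to $\varrho_1(b)\wedge\neg\varrho_2(b)\le\varrho_1(b)$ produces some $b'\le b$ with $\varrho_1(b')\le\neg\varrho_2(b)\le\neg\varrho_2(b')$, so $\varrho_1(b')$ and $\varrho_2(b')$ are incompatible in $\mathbb C$; choosing a $\mathbb B$-generic $\bar G$ with $b'\in\bar G$, the common filter generated by $\varrho_1''\bar G$ and $\varrho_2''\bar G$ would contain the two incompatible elements $\varrho_1(b'),\varrho_2(b')$, which is impossible for a proper filter. (Bottomlessness is exactly what makes all these filters proper and guarantees $b'\ne 0$.)

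With the criterion in hand, functoriality reduces to computing, for a $\mathbb P$-generic $G\ni p$, the filter generated by $\mathrm{IP}(\pi,p)''G$; by the last clause of Lemma \ref{fhs} this filter is the one generated by $i_G(\pi''\Gamma)=\pi''G$, namely $\{b\le\pi(p):\exists p'\in G,\ \pi(p')\le b\}$. For identities, take $\pi=\id_\mathbb P$: the filter is $\{b\le p:\exists p'\in G,\ p'\le b\}$, which is exactly the $\mathrm{RO}(\mathbb P)_{\downarrow p}$-generic filter determined by $G$ — the same filter generated by $\id_{\mathrm{RO}(\mathbb P)_{\downarrow p}}''G$ — so the criterion gives $\Psi(\id_{\langle\mathbb P,p\rangle})=\mathrm{IP}(\id_\mathbb P,p)=\id_{\Psi(\langle\mathbb P,p\rangle)}$. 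For composition, let $\pi:\langle\mathbb P,p\rangle\to\langle\mathbb Q,q\rangle$ and $\pi':\langle\mathbb Q,q\rangle\to\langle\mathbb R,r\rangle$ be morphisms (so $\pi(p)=q$, $\pi'(q)=r$, $(\pi'\circ\pi)(p)=r$) and fix a $\mathbb P$-generic $G\ni p$: on one hand $\mathrm{IP}(\pi'\circ\pi,p)''G$ generates $\{b\le r:\exists p'\in G,\ (\pi'\circ\pi)(p')\le b\}$; on the other, $\mathrm{IP}(\pi,p)''G$ generates the filter on $\mathrm{RO}(\mathbb Q)_{\downarrow q}$ whose induced $\mathbb Q$-generic is $H:=\pi''G$, and then $\mathrm{IP}(\pi',q)''H$ generates $\{b\le r:\exists t\in H,\ \pi'(t)\le b\}$, which by monotonicity of $\pi'$ equals $\{b\le r:\exists p'\in G,\ \pi'(\pi(p'))\le b\}$ — the same filter. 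The criterion then yields $\Psi(\pi'\circ\pi)=\mathrm{IP}(\pi',q)\circ\mathrm{IP}(\pi,p)=\Psi(\pi')\circ\Psi(\pi)$.

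The step I expect to be the main obstacle is the bookkeeping inside the composition computation: one has to consistently identify a generic filter on a relativised algebra $\mathrm{RO}(\mathbb Q)_{\downarrow q}$ with the $\mathbb Q$-generic filter it induces, and in particular verify that this induced $\mathbb Q$-generic is \emph{precisely} $\pi''G$ and not a proper subfilter of it — and this is exactly where the downward-directedness of generic filters and the monotonicity of weak projections get used. Once that identification is nailed down, everything else is a direct unwinding of Lemmas \ref{filterweak} and \ref{fhs}.
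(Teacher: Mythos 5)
Your argument is correct, and it is worth noting that the paper itself gives no in-text proof of Lemma \ref{cat}: it simply cites \cite[Lemma 2.10]{DPT}. So what you have produced is a genuine, self-contained verification rather than a rederivation of the paper's argument. The organising device you isolate --- the uniqueness criterion that two projections between bottomless complete Boolean algebras agreeing on the filters generated by images of all generics must be equal --- is a clean way to reduce both functoriality identities ($\Psi(\id)=\id$ and $\Psi(\pi'\circ\pi)=\Psi(\pi')\circ\Psi(\pi)$) to the explicit description of $\varrho''G$ in the last clause of Lemma \ref{fhs}, and your contrapositive argument for the criterion (producing $b'\le b$ with $\varrho_1(b')$ and $\varrho_2(b')$ incompatible) is correct. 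The composition computation also correctly handles the point that the image of a filter base and the image of the filter it generates have the same upward closure, which is where monotonicity of $\pi'$ enters. Two small remarks, neither of which I regard as a gap. First, the quantifier ``for every $\mathbb B$-generic filter $\bar G$ over $V$'' in your criterion should, strictly speaking, be read as a statement of the forcing relation (or be run over countable elementary submodels), since generic filters over $V$ need not exist; in your applications the hypothesis is in fact verified via Lemma \ref{fhs}, which is itself a forcing-theoretic statement, so the argument goes through in the standard way. Second, your proof tacitly uses that $\mathrm{IP}(\pi,p)$ maps $p$ to $\pi(p)$ (so that the codomain really is $\mathrm{RO}(\mathbb Q)_{\downarrow\pi(p)}$); this is asserted in the paper's setup of the induced projection and follows from a one-line computation with $\dot g=\pi''\Gamma$, but it would be worth stating explicitly since the whole pointed-category formalism rests on it.
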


\begin{proof}This lemma is proved, with different notation, in \cite[Lemma 2.10]{DPT}.
\end{proof}

To wrap up the section, we recall the definition of quotient forcing.

\begin{definition}
    Given a projection $\varrho:\mathbb P\to\mathbb Q$ and a $\mathbb Q$-generic filter $H$ over $V$, we define the \emph{quotient forcing} $\mathbb P/H$ as the subposet of $\mathbb P$ of all $p\in\mathbb P$ with $\varrho(p)\in H$.
\end{definition}

\begin{lemma}
    Every $\mathbb P/H$-generic filter over $V[H]$ is $\mathbb P$-generic over $V$.
\end{lemma}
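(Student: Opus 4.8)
The plan is to take an arbitrary $\mathbb P/H$-generic filter $G$ over $V[H]$ and verify the two defining properties of a $\mathbb P$-generic filter over $V$: that $G$ is a filter on the whole poset $\mathbb P$, and that $G$ meets every dense subset $D\subseteq\mathbb P$ lying in $V$.

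First I would check that $G$ is a filter on $\mathbb P$ and not merely on $\mathbb P/H$. Directedness is immediate, because $\mathbb P/H$ carries the order inherited from $\mathbb P$, so any common lower bound of two elements of $G$ found inside $\mathbb P/H$ is also a common lower bound in $\mathbb P$; and $\mathbb 1_{\mathbb P}\in G$ since $\varrho(\mathbb 1_{\mathbb P})=\mathbb 1_{\mathbb Q}\in H$, so $\mathbb 1_{\mathbb P}\in\mathbb P/H$. The only nontrivial point is upward closure: if $p\in G$ and $p\le_{\mathbb P}p'$ with $p'\in\mathbb P$, then $\varrho(p)\le_{\mathbb Q}\varrho(p')$, and since $\varrho(p)\in H$ and $H$ is a filter we get $\varrho(p')\in H$, i.e. $p'\in\mathbb P/H$; then $G$, being upward closed inside $\mathbb P/H$, contains $p'$.

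The heart of the argument is genericity. Fix $D\in V$ dense in $\mathbb P$; I claim $D\cap(\mathbb P/H)$ is dense in $\mathbb P/H$ and lies in $V[H]$. Membership in $V[H]$ is clear, since $D\in V\subseteq V[H]$ and $\mathbb P/H$ is definable from $\varrho\in V$ and $H$. For density, fix $p\in\mathbb P/H$ and work in $V$ with the set $A_p:=\{\varrho(d):d\in D,\ d\le_{\mathbb P}p\}\subseteq\mathbb Q$. Using the projection property in the strong form guaranteed by Definition \ref{weakproj} (for every $q\le_{\mathbb Q}\varrho(p)$ there is $p'\le_{\mathbb P}p$ with $\varrho(p')\le_{\mathbb Q}q$) together with density of $D$ in $\mathbb P$, one sees that $A_p$ is predense below $\varrho(p)$: indeed given $q\le_{\mathbb Q}\varrho(p)$, lift it to $p'\le_{\mathbb P}p$, refine to $d\in D$ with $d\le_{\mathbb P}p'$, and note $\varrho(d)\in A_p$ with $\varrho(d)\le_{\mathbb Q}q$. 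Hence the set $B_p:=\{q\in\mathbb Q:q\perp_{\mathbb Q}\varrho(p)\}\cup\{q\in\mathbb Q:\exists a\in A_p,\ q\le_{\mathbb Q}a\}$ is dense in $\mathbb Q$ and lies in $V$. Since $H$ is $\mathbb Q$-generic over $V$ and $\varrho(p)\in H$, the filter $H$ meets $B_p$ in an element compatible with $\varrho(p)$, hence necessarily in the second part: there is some $a=\varrho(d)\in H$ with $d\in D$ and $d\le_{\mathbb P}p$ (using upward closure of $H$ to conclude $a\in H$ from $q\le_{\mathbb Q}a$, $q\in H$). Then $d\in D\cap(\mathbb P/H)$ and $d\le_{\mathbb P}p$, establishing density. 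Applying $\mathbb P/H$-genericity of $G$ over $V[H]$ to the dense set $D\cap(\mathbb P/H)\in V[H]$ yields $G\cap D\neq\emptyset$, which completes the proof.

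I expect the only genuine subtlety to be the predensity claim for $A_p$: this is exactly where the full strength of ``projection'' (as opposed to merely order-preserving) is needed, via the lifting of conditions $q\le_{\mathbb Q}\varrho(p)$ to conditions $p'\le_{\mathbb P}p$; after that, the passage from predensity of $A_p$ below $\varrho(p)$ to density of $B_p$ in $\mathbb Q$, and the extraction of the witness $d$ from $H\cap B_p$, are routine manipulations with filters, as is the verification that $G$ is a filter on $\mathbb P$.
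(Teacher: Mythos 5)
Your proof is correct. The paper itself gives no argument for this lemma: it simply cites Kunen (Lemma V.4.45), where the quotient is set up via a complete embedding $\imath:\mathbb Q\to\mathrm{RO}(\mathbb P)$, and then remarks that the two formulations of the quotient agree by Lemma \ref{karagila}. You instead work directly with the projection-based definition $\mathbb P/H=\{p:\varrho(p)\in H\}$, and your argument is the standard self-contained one: the filter verification is routine (order-preservation of $\varrho$ gives upward closure in all of $\mathbb P$, and $\varrho(\mathbb 1_{\mathbb P})=\mathbb 1_{\mathbb Q}$ is available because $\varrho$ is a full projection, not merely a weak one), and the genericity step correctly isolates where the lifting clause of Definition \ref{weakproj} with $p^\ast=p$ is used, namely to show that $A_p=\{\varrho(d):d\in D,\ d\le_{\mathbb P}p\}$ is dense below $\varrho(p)$, whence $B_p$ is dense in $\mathbb Q$ and an element of $H\cap B_p$ (necessarily compatible with $\varrho(p)$, as both lie in the filter $H$) yields $d\in D\cap(\mathbb P/H)$ below $p$. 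What your route buys is that one never has to pass to $\mathrm{RO}(\mathbb P)$ or translate between the complete-embedding and projection pictures; what the paper's citation buys is brevity and alignment with the textbook treatment. Both are valid.
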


\begin{proof}
    The proof of this lemma is in \cite[Lemma V.4.45]{Kunen}. There, the quotient is defined from a complete embedding, but it is equivalent to our definition thanks to Lemma \ref{karagila}.
\end{proof}

\section{Sigma-Příkrý Forcing}

To state our Definition \ref{k-Solovay}, the analogue of Definition \ref{Solovay} in the higher context of $L(V_{\kappa+1})$, we first need to discuss $\Sigma$-Příkrý forcing. This concept was introduced by Alejandro Poveda, Assaf Rinot and Dima Sinapova in their article \cite{PRS1}, and further developed in \cite{PRS2} and \cite{PRS3}, where they axiomatised fundamental properties satisfied by the original ``vanilla Příkrý forcing'' defined by Karel Příkrý in \cite{Pri}. \medskip

Before defining what a $\Sigma$-Příkrý forcing is, we need to fix a so-called \emph{$\Sigma$-sequence}: a non-decreasing sequence $\langle \kappa_n: n<\omega\rangle$ of regular uncountable cardinals converging to some strong limit $\kappa$. This will serve as a blanket assumption throughout this article.

\begin{definition}
Given a $\Sigma$-sequence $\langle \kappa_n:n<\omega\rangle$ as before, a forcing poset $\langle \mathbb P, \le, \mathbb 1\rangle$ is \emph{$\Sigma$-Příkrý} if there exist
\begin{itemize}
    \item a surjective map $\ell:\mathbb P \to \omega$, called \emph{length};
    \item a cardinal $\mu$;
    \item a map $c:\mathbb P \to \mu$
\end{itemize}
satisfying the following conditions:
\begin{itemize}
    \item for all $q\le p$, it holds $\ell(q)\ge \ell(p)$; 
    \item for all $p\in\mathbb P$, there is $q\le p$ with $\ell(q)=\ell(p)+1$;
    \item the subposet $\langle \ell^{-1}(\{n\})\cup\{\mathbb 1\}, \le, \mathbb 1\rangle$ is $\kappa_n$-directed-closed, meaning that if $L\subseteq\mathbb P$ is a set of pairwise compatible elements with $|L|<\kappa_n$ and such that $\ell(p)=n$ for all $p\in L$, there is some $r\in\mathbb P$ with $\ell(r)=n$ and $r\le p$ for all $p\in L$;
    \item $\mathbb 1 \Vdash_{\mathbb P} (\check \mu = \check \kappa ^+)$;
    \item for all $p$ and $q$ in $\mathbb P$, if $c(p)=c(q)$ then there is $r\le p$, $r \le q$ such that \[\ell(r)=\ell(p)=\ell(q);\]
    \item for $q\le p$, $n<\omega$ and $m<\omega$, if $\ell(q)=\ell(p)+n+m$, then the set
    \[\{r\le p: q\le r,\ \ell(r)=\ell(p)+n\}\]
    has a greatest element, denoted by $m(p,q)$;
    \item in the particular case $m=0$, we write $w(p,q)$ instead of $0(p,q)$;
    \item for all $p\in\mathbb P$, the set $W(p):=\{w(p,q): q\le p\}$ has size ${<}\mu$;
    \item for $p'\le p$, the map $q\mapsto w(p,q)$ is order-preserving from $W(p')$ to $W(p)$;
    \item if $U\subseteq\mathbb P$ is such that for all $r\in U$ and $q\le r$ if $\ell(q)=\ell(r)$ then $q\in U$, then, for all $p\in\mathbb P$ and $n<\omega$, there is $q\le p$ with $\ell(q)=\ell(p)$ such that
    \[\{r\in \mathbb P : r\le q,\ \ell(r)=\ell(q)+n\}\]
    is either contained in $U$ or disjoint from $U$.
\end{itemize}
\end{definition}

We refer the reader to the articles \cite{PRS1}, \cite{PRS2} and \cite{PRS3} for more details on this kind of forcings and their applications to various branches of set theory. However, we state a couple of important properties that are going to be fundamental in this article. The first, typically referred to as the \emph{strong Příkrý property}, is:


\begin{lemma}[Strong Příkrý Property]\label{strongPříkrýproperty}
    Let $\mathbb P$ be a $\Sigma$-Příkrý poset with $\ell:\mathbb P\to\omega$ as length. For all $p\in\mathbb P$ and all dense open sets $D\subseteq\mathbb P$, there are a condition $q\le p$ with $\ell(q)=\ell(p)$ and $n>0$ such that every $r\le q$ with $\ell(r)\ge\ell(q)+n$ belongs to $D$.
\end{lemma}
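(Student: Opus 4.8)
The plan is to lean on the last clause in the definition of $\Sigma$-Příkrý forcing — the homogeneity property for length-hereditary sets — and to diagonalise it against the $\kappa_n$-directed-closure of the level subposets. Fix $p$ and a dense open $D\subseteq\mathbb P$. Since $D$ is open it is in particular \emph{length-hereditary} (if $r\in D$ and $r'\le r$ with $\ell(r')=\ell(r)$, then $r'\in D$), so the homogeneity clause applies with $U:=D$. I would recursively construct a $\le$-decreasing chain $p=q_{-1}\ge q_0\ge q_1\ge\cdots$ of conditions, all of length $\ell(p)$: given $q_{n-1}$, apply the homogeneity clause to $q_{n-1}$, to $D$, and to the number $n$, obtaining $q_n\le q_{n-1}$ with $\ell(q_n)=\ell(q_{n-1})$ such that
\[\{r\le q_n:\ \ell(r)=\ell(q_n)+n\}\ \text{is either contained in}\ D\ \text{or disjoint from}\ D.\]
The crucial observation is that these decisions persist downwards: if $q'\le q_n$ with $\ell(q')=\ell(q_n)$, then $\{r\le q':\ \ell(r)=\ell(q')+n\}\subseteq\{r\le q_n:\ \ell(r)=\ell(q_n)+n\}$, so $q'$ inherits the same dichotomy.

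Next I would collapse the chain to a single condition. The set $\{q_n:n<\omega\}$ is a $\le$-chain, hence pairwise compatible; it has size $\aleph_0<\kappa_{\ell(p)}$; and all of its members have length $\ell(p)$. By $\kappa_{\ell(p)}$-directed-closure of the subposet $\ell^{-1}(\{\ell(p)\})\cup\{\mathbb 1\}$ there is a condition $q$ with $\ell(q)=\ell(p)$ and $q\le q_n$ for every $n<\omega$. By the persistence observation, for every $n$ the set $\{r\le q:\ \ell(r)=\ell(q)+n\}$ is contained in $D$ or disjoint from $D$. It remains to see that the first alternative holds for some $n>0$. By density there is $r'\le q$ with $r'\in D$; set $m:=\ell(r')-\ell(q)\ge 0$. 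If $m>0$, then $r'$ witnesses that $\{r\le q:\ \ell(r)=\ell(q)+m\}$ meets $D$, so that set is contained in $D$ and we take $n:=m$. If $m=0$, use the clause granting a one-step extension to get $r''\le r'$ with $\ell(r'')=\ell(q)+1$; since $D$ is open, $r''\in D$, so $\{r\le q:\ \ell(r)=\ell(q)+1\}$ meets $D$ and is therefore contained in $D$, and we take $n:=1$.

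Finally, to pass from ``$\ell(r)=\ell(q)+n$'' to ``$\ell(r)\ge\ell(q)+n$'': given $r\le q$ with $\ell(r)=\ell(q)+n+j$ for some $j\ge 0$, the clause on greatest elements, applied to the base $q$, the condition $r\le q$ and the decomposition $\ell(r)=\ell(q)+n+j$, produces an intermediate condition $s:=j(q,r)$ with $q\ge s\ge r$ and $\ell(s)=\ell(q)+n$; then $s$ lies in $\{r\le q:\ \ell(r)=\ell(q)+n\}\subseteq D$, and since $r\le s$ and $D$ is open, $r\in D$. This furnishes the required $q$ and $n$. I expect the only genuine subtlety to be the diagonalisation step — reconciling countably many homogeneity decisions at distinct levels into one condition — which goes through precisely because the homogeneity clause returns a \emph{same-length} strengthening and because density of $D$ (rather than the clause itself, which is neutral between its two alternatives) is what pins down the useful alternative; the rest is routine bookkeeping with the $\Sigma$-Příkrý axioms.
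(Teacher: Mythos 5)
Your proof is correct, and it is essentially the standard argument: the paper itself only cites \cite[Corollary 2.7]{PRS1} for this lemma, and the proof there proceeds exactly as you do --- decide each level $n$ by the homogeneity clause, fuse the countably many same-length strengthenings via $\kappa_{\ell(p)}$-directed-closure, use density (plus a one-step extension when needed) to pin down the ``contained in $D$'' alternative at some level $n>0$, and propagate to higher levels via the interpolant $m(p,q)$ and openness of $D$.
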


\begin{proof}
    The proof of this result can be found in \cite[Corollary 2.7]{PRS1}.
\end{proof}

Another fact about $\Sigma$-Příkrý forcings that is crucial for us is the following.

\begin{lemma}\label{bddsbs}
Forcing with a $\Sigma$-Příkrý poset does not add bounded subsets of $\kappa$.
\end{lemma}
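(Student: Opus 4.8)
The statement is that a $\Sigma$-Příkrý poset $\mathbb P$ adds no new bounded subsets of $\kappa$. Fix a bounded subset $\dot a$ of $\kappa$ in the generic extension, say $p \Vdash \dot a \subseteq \check\theta$ for some $\theta < \kappa$, and pick $n < \omega$ large enough that $\theta < \kappa_n$ (possible since the $\kappa_n$ converge to $\kappa$). The plan is to show that $\dot a$ is decided by a condition of the form $q$ with $\ell(q) = \ell(p)$ together with the ``top part'' of the forcing below $q$ at a fixed length; more precisely, I will produce a single condition $q \le p$ with $\ell(q) = \ell(p)$ that already decides the membership $\check\alpha \in \dot a$ for every $\alpha < \theta$, which forces $\dot a$ into the ground model.

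First I would handle a single ordinal $\alpha < \theta$. For each such $\alpha$, the set $D_\alpha := \{ r \in \mathbb P : r \text{ decides } \check\alpha \in \dot a \}$ is dense open. By the Strong Příkrý Property (Lemma \ref{strongPříkrýproperty}), starting from $p$ there is $q_\alpha \le p$ with $\ell(q_\alpha) = \ell(p)$ and $n_\alpha > 0$ such that every $r \le q_\alpha$ with $\ell(r) \ge \ell(q_\alpha) + n_\alpha$ lies in $D_\alpha$. Now, using the $\kappa_{\ell(p)}$-directed-closure of the length-$\ell(p)$ part of the poset and the fact that $\theta < \kappa_n \le \kappa_{\ell(p)}$ — here I should be careful: I should first pass to a condition of length at least $n$ before running the argument, or simply note $\langle \ell^{-1}(\{m\}) \cup \{\mathbb 1\}, \le \rangle$ is $\kappa_m$-directed-closed and choose $m = \ell(p)$ large by extending $p$ using the second bullet of the definition of $\Sigma$-Příkrý finitely or $\omega$-many times if needed, then restart — I can amalgamate the $\theta$-many conditions $\langle q_\alpha : \alpha < \theta \rangle$ (after arranging they all have the same length and are pairwise compatible, which the coherence of the $w(\cdot,\cdot)$ structure and directed-closure give) into a single $q \le p$ with $\ell(q) = \ell(p)$ refining all of them.

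The key remaining point is to deal with the ``tail lengths'' $n_\alpha$, which a priori vary with $\alpha$ and need not be bounded. The standard move: argue that below the amalgamated $q$, for each $\alpha$ the decision of $\check\alpha \in \dot a$ is actually \emph{uniform} in the tail, i.e. does not depend on which $r \le q$ of length $\ge \ell(q) + n_\alpha$ one picks — otherwise two such $r$'s would be compatible (via $w(q,\cdot)$ and directed closure, or via the colouring $c$) and give contradictory decisions. Hence $q$ itself decides $\check\alpha \in \dot a$ for each $\alpha < \theta$: define $a := \{\alpha < \theta : q \Vdash \check\alpha \in \dot a\} \in V$, and then $q \Vdash \dot a = \check a$. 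Finally, since $p$ was arbitrary and such $q \le p$ always exists, the set of conditions forcing ``$\dot a \in V$'' (i.e., $\dot a$ equals some ground-model set) is dense below $p$, so $\mathbb 1 \Vdash \dot a \in \check V$, completing the proof.

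\textbf{Main obstacle.} The delicate part is the uniformity-in-the-tail argument and the amalgamation over $\theta$-many coordinates simultaneously: one must verify that the conditions $q_\alpha$ can be chosen coherently (same length $\ell(p)$, pairwise compatible) so that $\kappa_{\ell(p)}$-directed-closure applies to the whole family of size $\theta < \kappa_{\ell(p)}$, and that the resulting $q$ genuinely decides each $\check\alpha \in \dot a$ outright rather than only ``eventually''. This is exactly where the full package of $\Sigma$-Příkrý axioms — the structure of $w(p,q)$, the order-preservation of $q \mapsto w(p,q)$, the smallness of $W(p)$, and the colouring $c$ — is used to force compatibility of conditions with the same length; I expect the cleanest route is to cite or mirror the corresponding tail-decision lemma already present in \cite{PRS1} rather than re-derive it from scratch.
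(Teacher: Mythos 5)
The paper does not actually prove Lemma \ref{bddsbs} internally: its entire proof is a citation to \cite[Lemma 2.10]{PRS1}. Your sketch follows the standard direct argument (decide $\check\alpha\in\dot a$ for each $\alpha<\theta$ via the Strong Příkrý Property, amalgamate $\theta$-many same-length conditions using $\kappa_{\ell(p)}$-directed-closure after first lengthening $p$ so that $\theta<\kappa_{\ell(p)}$), and that skeleton is sound; the amalgamation worry you raise is easily handled by building the $q_\alpha$ as a $\le$-decreasing chain of fixed length, which is automatically pairwise compatible, with directed closure supplying lower bounds at limit stages.

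There is, however, a genuine gap at the step you yourself flag as delicate: the uniformity of the decision in the tail. Your proposed justification --- that two extensions $r,r'\le q$ of length $\ell(q)+n_\alpha$ deciding $\check\alpha\in\dot a$ oppositely ``would be compatible (via $w(q,\cdot)$ and directed closure, or via the colouring $c$)'' --- is false. In Příkrý-type forcings, distinct same-length extensions of a condition are typically incompatible (e.g.\ one-step extensions by two different measure-one points), directed closure only applies to families that are \emph{already} pairwise compatible, and $c$ yields compatibility only when the colours happen to agree, which cannot be arranged here. The correct tool is the last clause of the $\Sigma$-Příkrý definition (the partition property for sets $U$ closed under same-length extensions): apply it to $U_\alpha:=\{r\in\mathbb P: r\Vdash\check\alpha\in\dot a\}$ with $n=n_\alpha$ to shrink $q$ (keeping its length) so that the set of extensions of length $\ell(q)+n_\alpha$ is either contained in or disjoint from $U_\alpha$; combined with the Strong Příkrý Property \ref{strongPříkrýproperty} (all such extensions decide) and the $m(p,q)$-interpolation axiom (every $r\le q$ of length $\ge\ell(q)+n_\alpha$ factors through one of length exactly $\ell(q)+n_\alpha$), this yields that $q$ itself decides $\check\alpha\in\dot a$. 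This application must be interleaved into the recursion for each $\alpha$. You list these axioms as ``the full package'' but never deploy the partition clause, instead deferring to the tail-decision lemma of \cite{PRS1} --- which is, in effect, exactly what the paper does by citing \cite[Lemma 2.10]{PRS1} outright.
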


\begin{proof}
The proof of this lemma is contained in \cite[Lemma 2.10]{PRS1}. 
\end{proof}


In order to introduce systems and weak systems of forcing posets, we adopt a category-theoretic approach, which provides a cleaner and more insightful perspective on the fundamental properties of the definition. This is similar to our approach in the statement of Lemma \ref{cat}, in contrast to Lemma 2.10 of \cite{DPT}.

\begin{definition}
    A \emph{posetal category} is a category $\mathsf{D}$ where there is at most one morphism from any one object to another. Such a category can be viewed as a partial order, where $d \le e$ if and only if there exists a morphism $d \to e$. We focus on posetal categories that are \emph{downwards directed}, meaning that for any pair of objects $d$ and $e$ in the category there are an object $f$ and morphisms $f\to d$ and $f\to e$. Equivalently, viewing the category as a partial order, we require that any two elements have a lower bound.
\end{definition}

A \emph{directed diagram} in a category $\mathsf{C}$ is a functor from a downwards directed posetal category $\mathsf{D}$ to $\mathsf{C}$. In this article, we do not consider upwards directed categories, as these are simply the opposites of downwards directed categories.\medskip


An example of a directed diagram in the category $\mathsf{wPos}^\bullet$ is the following:
\begin{center}
    \begin{tikzcd}
    \langle \mathbb P_0, p_0\rangle \arrow[r, "\pi_{0,1}"] \arrow[rd, "\pi_{0,2}"'] & \langle \mathbb P_1, p_1\rangle \arrow[d, "\pi_{1,2}"] \\
    & \langle \mathbb P_2, p_2\rangle
\end{tikzcd}
\end{center}
where the maps $\pi_{0,1}$, $\pi_{0,2}$ and $\pi_{1,2}$ are weak projections such that $p_1=\pi_{0,1}(p_0)$ and $p_2=\pi_{0,2}(p_0)$. One important fact about directed diagrams is that they are preserved by functors. So for example we can apply the functor $\Psi$ from Lemma \ref{cat} and find another directed diagram
\begin{center}
    \begin{tikzcd}
    \mathrm{RO}(\mathbb P_0)_{\downarrow p_0} \arrow[r, "\varrho_{0,1}"] \arrow[rd, "\varrho_{0,2}"'] & \mathrm{RO}(\mathbb P_1)_{\downarrow p_1} \arrow[d, "\varrho_{1,2}"] \\
    & \mathrm{RO}(\mathbb P_2)_{\downarrow p_2}
\end{tikzcd}
\end{center}
recalling that $\mathrm{RO}(\mathbb P_i)_{\downarrow p_i}=\Psi(\langle \mathbb P_i,p_i\rangle)$ and $\varrho_{i,j}:=\mathrm{IP}(\pi_{i,j}, p_i)=\Psi(\pi_{i,j})$.

\begin{definition}\label{def: nice systems}
    A weak directed system of forcing posets or, shortly, a weak system, is a cone over a directed diagram in the category $\mathsf{wPos}^\bullet$.
\end{definition}

Definition \ref{def: nice systems} is equivalent to Definition 3.6 in \cite{DPT}, but is expressed using the language of category theory. For sake of completeness and clarity, now we unravel Definition \ref{def: nice systems} to show what is really a weak system. First, we fix a downwards directed posetal category $\mathsf D$. For each object $d$ of $\mathsf{D}$, we consider a partial order $\mathbb P_d$ and an element $p_d \in \mathbb P_d$. For each arrow $d \to e$ in $\mathsf{D}$, we consider a weak projection $\pi_{d,e} : \mathbb P_d \to \mathbb P_e$ such that $\pi_{d,e}(p_d) = p_e$. This can be considered as a directed diagram in the category $\mathsf{wPos}^\bullet$ if
\begin{itemize}
    \item $\pi_{d,d}=\mathrm{id}_{\mathbb P_d}$ for each object $d$ in $\mathsf D$;
    \item $\pi_{e,f}\circ \pi_{d,e}=\pi_{d,f}$ whenever $d\to e$ and $e\to f$ in $\mathsf D$.
\end{itemize}
Now, a \emph{cone} over this directed diagram consists of a partial order $\mathbb P_\infty$, an element $p_\infty \in \mathbb P_\infty$, and weak projections $\pi_{\infty, d}$ for each object $d$ of $\mathsf{D}$ such that:
\begin{itemize}
\item $\pi_{\infty,d}(p_\infty) = p_d$ for each object $d$ of $\mathsf{D}$;
\item $\pi_{d,e} \circ \pi_{\infty,d} = \pi_{\infty,e}$ for all arrows $d \to e$ in $\mathsf{D}$.
\end{itemize}
When the context is clear, we often abuse notation by saying ``let $\langle \mathbb P_\infty, p_\infty\rangle$ be a weak system" without explicitly writing the morphisms $\pi_{\infty,d}$, or even just ``let $\mathbb P_\infty$ be a weak system", implicitly assuming that $p_\infty = \mathbb 1_{\mathbb P_\infty}$.\medskip

If every $\mathbb P_d$ for $d$ an object of $\mathsf{D}$, as well as $\mathbb P_\infty$, is a $\Sigma$-Příkrý forcing poset, we say that $\mathbb P_\infty$ is a \emph{weak $\Sigma$-system}.

\begin{definition}\label{miau}
    A directed system of Boolean algebras or, shortly, a system, is a cone over a directed diagram in the category $\mathsf{bcBa}$.
\end{definition}

Assume $\langle\mathbb P_\infty, p_\infty\rangle$ is a weak system over some directed diagram. Since such directed diagram is a functor from a downwards directed posetal category $\mathsf D$ to $\mathsf{wPos}^\bullet$, we can compose this functor with the functor $\Psi$ from Lemma \ref{cat}. This way, we get a directed diagram in the category $\mathsf{bcBa}$, and $\Psi(\langle \mathbb P_\infty, p_\infty\rangle)=\mathrm{RO}(\mathbb P_\infty)_{\downarrow p_\infty}$ is a cone over this directed diagram, with the projections given by $\Psi(\pi)=\mathrm{IP}(\pi_{\infty, d}, p_\infty)$. We can always do this, whenever we have a weak system. We call this the \emph{Boolean completion} of the weak system, and write $\mathbb B_d:=\mathrm{RO}(\mathbb P_d)_{\downarrow p_d}$ for all objects $d$ of $\mathsf D$, in order to have less cumbersome notation.\medskip

Another important definition from \cite{DPT} is the following. For the rest of the section, let $\kappa$ be a strong limit cardinal and let $\lambda>\kappa$ be inaccessible.

\begin{definition}\label{nice systems: capturing, bounded, amenable}
    A weak system $\mathbb P_\infty$ is $(\kappa,\lambda)$-nice if it is:
    \begin{itemize}
        \item\label{item 1: capturing} \emph{$\kappa$-capturing}, meaning that if $G\subseteq\mathbb P_\infty$ is a generic filter over $V$, then for every subset $x\subseteq\kappa$ in $V[G]$ and every object $d$ in $\mathsf D$ there is an arrow $e\to d$ such that $x\in V[\pi''_{\infty, e}G]$;
        \item \emph{$\lambda$-bounded}, meaning that $\mathbb P_d\in H(\lambda)$ for all objects $d$ of $\mathsf D$;
        \item \emph{amenable to interpolation}, meaning that whenever $G\subseteq\mathbb P_\infty$ is a generic filter over $V$, then $\lambda=(\kappa^+)^{V[G]}$ and $\cof(\kappa)^{V[G]}=\omega$.
    \end{itemize}
\end{definition}

For our purposes, the $\kappa$-capturing property is not enough, and we need the apparently stronger $({<}\lambda)$-capturing property to hold in order to prove our main theorems. Crucially, and contrary to our original beliefs, that were present in an earlier version of this article, this is directly implied by the $\kappa$-capturing property. A similar behaviour can be noticed in the classical Solovay model:

\begin{lemma}\label{lem: alternative def of Solovay}
    Let $V\subseteq M$ be models of \textsf{ZFC}, with $L(\mathbb R)^M$ Solovay over $V$. Then for all $\alpha<\omega_1^M$ and all $x\subseteq\alpha$ in $M$, $x$ is \emph{small-generic} over $V$: there is a countable poset $\mathbb P$ in $V$ and a filter $g\subseteq\mathbb P$ generic over $V$ with $V[x]\subseteq V[g]$.
\end{lemma}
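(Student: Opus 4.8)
The plan is to reduce a subset of a countable ordinal to a subset of $\omega$, which is covered by the second clause of Definition \ref{Solovay}, by means of a coding argument. Recall first that for a set of ordinals $A$ the notation $V[A]$ denotes the $\subseteq$-least transitive model of $\mathsf{ZF}$ extending $V$, with the same ordinals, and containing $A$ (this is a model of $\mathsf{ZFC}$ since $V$ is), so that $A\in V[B]$ implies $V[A]\subseteq V[B]$.

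If $\alpha\le\omega$ then $x\subseteq\omega$ already, and Definition \ref{Solovay} applies to $x$ directly; so assume $\omega<\alpha<\omega_1^M$. Working in $M$, fix a bijection $b\colon\omega\to\alpha$ and let
\[E:=\{\langle m,n\rangle\in\omega\times\omega : b(m)<b(n)\},\]
a well-ordering of $\omega$ of order type $\alpha$ whose transitive collapse is precisely $b$. Put $x':=b^{-1}[x]\subseteq\omega$, so that $x=b[x']$, and encode the pair $\langle E,x'\rangle$ as a single subset $z\subseteq\omega$ lying in $M$ in any fixed recursive fashion (for instance via a recursive pairing of $\omega$ together with a split of $\omega$ into evens and odds), so that both $E$ and $x'$ are recursive in $z$.

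Next I would apply the second clause of Definition \ref{Solovay} to the real $z$: there are a countable poset $\mathbb P\in V$ and a filter $g\subseteq\mathbb P$ generic over $V$ with $V[z]\subseteq V[g]$. It then suffices to check that $x\in V[z]$. Since $E,x'\in V[z]$ and $V\subseteq V[z]\subseteq M$, and since well-foundedness of a relation on $\omega$ is absolute between transitive models of $\mathsf{ZF}$, the structure $(\omega,E)$ is a well-order in $V[z]$; by absoluteness of the Mostowski collapse, its transitive collapse computed inside $V[z]$ is the function $b$, whence $b\in V[z]$ and therefore $x=b[x']\in V[z]\subseteq V[g]$. Thus $V[x]\subseteq V[g]$, i.e.\ $x$ is small-generic over $V$.

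I do not expect a genuine obstacle here: the argument is essentially coding-plus-absoluteness bookkeeping, and the only mildly delicate points — the precise meaning of $V[\,\cdot\,]$ for a set of ordinals and the absoluteness of the transitive collapse between transitive $\mathsf{ZF}$-models — are entirely standard. The one subtlety worth stating explicitly is that one must carry along the well-order $E$ (equivalently, the collapsing bijection $b$), and not merely the copy $x'\subseteq\omega$ of $x$, since $b$ need not lie in $V[x']$.
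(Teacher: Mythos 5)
Your proof is correct, and it takes a genuinely different route from the paper's. You work directly from the two clauses of Definition \ref{Solovay}: you code the collapsing bijection $b$ (via the induced well-order $E$ of $\omega$) together with the pulled-back set $x'$ into a single real $z$, apply the small-genericity clause to $z$, and recover $x$ inside $V[z]$ by Mostowski absoluteness. The paper instead first invokes Bagaria--Woodin's Theorem \ref{BagariaWoodin} to reduce to the case $M=V[C]$ for $C$ generic for the L\'evy collapse, and then reruns Solovay's original argument: the bijection $f:\alpha\to\omega$ is arranged to be added by $\Coll(\omega,{\le}\alpha)$, the real $y=f''x$ is captured by some $\ell_{\lambda,\beta}''G$, and taking $\gamma=\max\{\alpha,\beta\}$ puts both $f$ and $y$ into $V[\ell_{\lambda,\gamma}''G]$. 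Your argument is more elementary and self-contained (it uses neither Theorem \ref{BagariaWoodin} nor any internal structure of the L\'evy collapse), and you correctly flag the one genuine subtlety — that the bijection itself must be carried along, since it need not lie in $V[x']$; the paper handles that same subtlety by sourcing the bijection from a small collapse rather than by coding it into the real. What the paper's route buys in exchange is the extra information that the witnessing small forcing can always be taken to be a L\'evy collapse $\Coll(\omega,{\le}\gamma)$ with $g$ the projection of the ambient generic, which is in the spirit of the later higher-analogue arguments; for the statement of the lemma as written, your proof fully suffices.
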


\begin{proof}
Note that, thanks to Bagaria-Woodin's Theorem \ref{BagariaWoodin}, it is enough to prove this lemma in the case $M=V[C]$ for $C\subseteq\Coll(\omega, {<}\omega_1^M)$ a filter generic over $V$.\medskip

From now onwards the proof will be the same as the original proof of Theorem \ref{welldef} done by Solovay in his \cite[Section 3.4]{Sol70}. There, he proved that for all $y\subseteq\omega$ in $M$, $y$ is small-generic over $V$, but during the proof he showed something more: indeed, that for all such $y$ there is some $\beta<\omega_1^M$ such that, if we call \[\ell_{\lambda,\beta}:\Coll(\omega,{<}\lambda)\to\Coll(\omega,{\le}\beta)\] the projection (like we did in Example \ref{lévy}), then it holds $V[y]\subseteq V[\ell_{\lambda,\beta}''G]$. Notice that in this case $\ell_{\lambda,\beta}''G\subseteq\Coll(\omega,{\le}\beta)$ is the filter that is generic over $V$, and the forcing poset $\Coll(\omega,{\le}\beta)$ is countable in $M$ because $\beta$ itself is countable.\medskip


So, in our case, let $\alpha<\omega_1^M$ and let $x\subseteq\alpha$ be in $M$. Notice that, from the point of view of $M$, $\alpha$ is countable, so there is a bijection $f:\alpha\to\omega$ that is being added by the forcing $\Coll(\omega,{\le}\alpha)$. Let $y:=f''x$, which is a subset of $\omega$ and belongs to $M$.\medskip

Thanks to Solovay's proof of Theorem \ref{welldef}, as we just remarked, there is $\beta<\omega_1^M$ with $V[y]\subseteq V[\ell_{\lambda,\beta}''G]$. If we define $\gamma:=\max\{\alpha,\beta\}$, we have $\gamma<\omega_1^M$, so the partial order $\Coll(\omega,{\le}\gamma)$ is countable in $M$, and if we call $g:=\ell_{\lambda,\gamma}''G\subseteq \Coll(\omega,{\le}\gamma)$, this $g$ is a filter that is $V$-generic, and from $f\in V[g]$ and $y\in V[g]$ we get $x\in V[g]$.
\end{proof}

\begin{lemma}\label{39}
    A $(\kappa,\lambda)$-nice weak system is also $({<}\lambda)$-capturing, meaning that if $G\subseteq\mathbb P_\infty$ is a generic filter over $V$, then for every $\alpha<(\kappa^+)^{V[G]}$, every subset $x\subseteq\alpha$ is any set in $V[G]$ and every object $d$ in $\mathsf D$ there is an arrow $e\to d$ such that $x\in V[\pi''_{\infty, e}G]$
\end{lemma}

\begin{proof}In the situation described by the statement, notice that, if we denote by $\mathrm{trcl}(\{x\})$ the transitive closure of $\{x\}$, this has size at most $\kappa$ in $V[G]$. Hence, there is a bijection $f:\eta\to \mathrm{trcl}(\{x\})$ in $V[G]$ for some $\eta\le\kappa$. Let us now define the following subset of $\eta^2$:
\[E:=\{\langle \alpha,\beta\rangle\in \eta^2: f(\alpha)\in f(\beta)\}.\]
We can code $E$ as a subset of $\kappa$ thanks to a Gödel's pairing function, hence by the $\kappa$-capturing property we know there is an arrow $e\to d$ such that $E\in V[\pi_{\infty,e}''G]$. But the transitive collapse of $\langle \eta,E\rangle$, that can be computed inside $V[\pi_{\infty,e}''G]$, is $\langle \mathrm{trcl}(\{x\}),\in\rangle$ hence we get $x\in V[\pi_{\infty,e}''G]$.
\end{proof}

\section{Examples of Nice Weak Sigma-Systems}

In the previous section, we recalled the notion of $(\kappa,\lambda)$-nice weak $\Sigma$-systems, introduced in \cite{DPT} in order to construct models of \textsf{ZF} satisfying certain regularity properties for subsets of $\kappa$. The aim of this section is to present the two main examples of $(\kappa,\lambda)$-nice weak $\Sigma$-systems.\medskip

Our first example is the \emph{Merimovich supercompact extender-based Příkrý forcing}, introduced by Carmi Merimovich in \cite{Meri}. We show it has the structure of a $(\kappa,\lambda)$-nice weak $\Sigma$-system.




\begin{definition}\label{merimovich}
    Let $\lambda$ be an inaccessible cardinal and $\kappa<\lambda$ a $(<\lambda)$-supercompact cardinal. This means we assume the existence of an elementary embedding $j:V\to M$ such that $\crit(j)=\kappa$, $j(\kappa)=\lambda$, and $M^{<\lambda}\subseteq M$. Let us now consider the downwards directed posetal category $\mathsf D$ where the objects are the elements in the set \[\mathcal D:=\{d\in [\lambda-\kappa]^{<\lambda} : \kappa\in d\}\]
    and we have an arrow $d\to e$ if and only if $d\supseteq e$. This posetal category is downwards directed because for all $d$ and $e$ in $\mathcal D$, it holds that $d\cup e\in \mathcal D$ and also $d\cup e\to d$, $d\cup e\to e$.\medskip
    
    For each $d\in\mathcal D$, we call $d$-\emph{object} any map $\nu: \dom(\nu)\to\kappa$ such that
    \begin{itemize}
        \item $\kappa\in\dom(\nu)\subseteq d$;
        \item $\left|\dom(\nu)\right|<\kappa$;
        \item if $\alpha<\beta$ are in $\dom(\nu)$, then $\nu(\alpha)<\nu(\beta)$.
    \end{itemize}
The set of $d$-objects is called $\Ob(d)$. Given $\nu$ and $\mu$ in $\Ob(d)$ we say that $\nu\ll \mu$ if
\begin{itemize}
    \item $\dom(\nu)\subseteq\dom(\mu)$;
    \item $|\nu|<\mu(\kappa)$;
    \item for all $\alpha\in \dom(\nu)$,  it holds $\nu(\alpha)<\mu(\kappa)$.
\end{itemize}
Now, for $d\in\mathcal D$, let us define the \emph{maximal coordinate} of $d$ as
\[\mc(d):=\{\langle j(\alpha),\alpha\rangle: \alpha\in d\}.\]
For $d\in \mathcal D$, the maximal coordinate induces a $\kappa$-complete ultrafilter over $\Ob(d)$:
\[E(d):=\{X\subseteq\Ob(d):\mc(d)\in j(X)\}.\]
We define the \emph{Merimovich supercompact extender-based Příkrý forcing} (or, just, Merimovich forcing) as the poset $\langle \mathbb P_\infty, \le_\infty\rangle$ where the conditions in $\mathbb P_\infty$ are pairs $\langle f,A\rangle$ such that
\begin{itemize}
    \item $\dom(f)\in \mathcal D$;
    \item $f:\dom(f)\to \kappa^{<\omega}$, and $f(\alpha)$ is order-increasing for all $\alpha\in \dom(f)$;
    \item $A\in E(\dom(f))$ and for each $\nu\in A$, $\nu(\kappa)>\max(f(\alpha))$.
\end{itemize}
Given any two conditions $\langle f,A\rangle$ and $\langle g,B\rangle$ in $\mathbb P_\infty$, we say that $\langle f,A\rangle\le^\circ \langle g,B\rangle$ if
\begin{itemize}
    \item $f\subseteq g$;
    \item for every object $\nu\in B$, it holds $\nu\upharpoonright \dom(f) \in A$.
    \end{itemize}
For $p=\langle f,A\rangle\in \mathbb P_\infty$ and $\nu\in A$, we will call $p^\curvearrowright \nu$ the condition $\langle f_\nu, A_\nu\rangle$ where
\begin{itemize}
    \item $f_\nu$ is the function with domain $\dom(f_\nu)=\dom(f)$ and such that
    \[f_\nu(\alpha)=\begin{cases} f(\alpha)& \text{if}\ \alpha\not\in\dom(\nu);\\
    f(\alpha)^\frown\langle \nu(\alpha)\rangle& \text{if}\ \alpha\in\dom(\nu);\end{cases}\]
    \item $A_\nu:=\{\mu\in A: \nu\ll\mu\}.$
\end{itemize}
If $p=\langle f,A\rangle\in\mathbb P_\infty$ and $\vec\nu\in A^{<\omega}$ is a $\ll$-increasing sequence, we define $p^\curvearrowright \vec\nu$ as
\[p^\curvearrowright \vec\nu:=(\cdots((p^\curvearrowright \vec\nu_0)^\curvearrowright \vec\nu_1)^\curvearrowright\cdots\vec\nu_{|\vec\nu|-1}).\]
Now the ordering $\le_\infty$ of the forcing poset $\mathbb P_\infty$ is defined as follows: $\langle f,A\rangle\le_\infty\langle g,B\rangle$ if and only if there is a $\ll$-increasing sequence $\vec\nu\in B^{<\omega}$ with $\langle f,A\rangle\le^\circ \langle g,B\rangle^\curvearrowright \vec\nu$.\medskip

In addition, for each $d\in\mathcal D$, the poset $\mathbb P_d$ will be the subposet of $\mathbb P_\infty$ consisting of all elements $\langle f,A\rangle$ with $\dom(f)\subseteq d$, and the map $\pi_{e,d}:\mathbb P_e\to\mathbb P_d$, for $d\preccurlyeq e$, is defined as \[\pi_{e,d}(\langle f,A\rangle):=\langle f\upharpoonright d, A\upharpoonright d\rangle.\]
\end{definition}


\begin{lemma}\label{merinonfully}
    If $\kappa<\lambda$ are such that $\kappa$ is $(<\lambda)$-supercompact and $\lambda$ is inaccessible, the Merimovich forcing is a $(\kappa,\lambda)$-nice weak $\Sigma$-system.
\end{lemma}

\begin{proof}
    The proof of this fact is in \cite[Section 4.1]{DPT}.
\end{proof}

In \cite{DPT}, the authors introduce a diagonal version of the above forcing, designed by Poveda and the second author of this paper during a visit of the latter to Harvard in the spring of 2024.

\begin{definition}\label{meridiag}
Let $\lambda$ be an inaccessible cardinal and $\kappa=\sup_{n<\omega}\kappa_n$, where each of the cardinals $\kappa_n$ is $({<}\lambda)$-supercompact, so for each $n<\omega$ there is some $j_n: V\to M_n$ with $\crit(j_n)=\kappa_n$, $j_n(\kappa_n)>\lambda$ and $M_n^{<\lambda}\subseteq M_n$. We now define, for $n<\omega$,
    \[\mathcal D_n:=\{d \in [\{\kappa_n\}\cup(\lambda-\kappa)]^{<\lambda}: \kappa_n\in d\}.\]
    Given $d\in \mathcal D_n$, we call $d$-object any map $\nu: \dom(\nu)\to \kappa_n$ such that:
    \begin{itemize}
        \item $\kappa_n\in \dom(\nu)\subseteq d$;
        \item $\left|\dom(\nu)\right|<\kappa_n$;
        \item $\nu(\kappa_{n})$ is an inaccessible cardinal above $\kappa_{n-1}$;
        \item if $\alpha<\beta$ are in $\dom(\nu)$, then $\nu(\alpha)<\nu(\beta)$.
    \end{itemize}
    The set of $d$-objects is denoted by $\Ob_n(d)$, and notice that the $n$ can be calculated from $d$ since $\kappa_n=\min(d)$. For $d\in \mathcal D_n$, define the \emph{maximal coordinate} of $d$ as
    \[\mc_n(d):=\{\langle j_n(\alpha),\alpha\rangle:\alpha\in d\}.\]
    For $d\in \mathcal D_n$, this induces a $\kappa_n$-complete ultrafilter over $\Ob(d)$:
    \[E_n(d):=\{X\subseteq \Ob_n(d): \mc_n(d)\in j_n(X)\}.\]
    Clearly, $E_n(d)$ is a $\kappa_n$-complete (yet, not necessarily normal) measure on $\mathrm{Ob}_n(d)$. Since $\kappa_n\in \dom(\nu)$ for any $\nu\in\mathrm{Ob}_n(d)$, it is nevertheless the case that $E_n(d)$ projects onto a normal measure; specifically, it projects to \[\mathcal{U}_n:=\{X\subseteq \kappa_n\mid \kappa_n\in j_n(X)\}\]
via the map sending $\nu$ to $\nu(\kappa_n)$.\medskip

Next, let $n\le m$ and let $d_n\in\mathcal D_n$ and $d_m\in\mathcal D_m$, respectively. We write  $d_n\subseteq^\ast d_m$ whenever $d_n\setminus\{\kappa_n\}\subseteq d_m$. Now, assume that $d_n\subseteq^\ast d_m$. Given $\nu\in\mathrm{Ob}_n(d_n)$ and $\mu\in\mathrm{Ob}_m(d_m)$, we write $\nu\ll \mu$ whenever
\begin{itemize}
    \item $\dom(\nu)\subseteq^\ast \dom(\mu)$;
    \item $\left|\nu\right|<\mu(\kappa_m)$;
    \item $\nu(\alpha)<\mu(\kappa_m)$ for all $\alpha\in\dom(\nu)$.
\end{itemize}

This definition yields a transitive order between objects. We are now ready to define the \emph{diagonal version of the Merimovich supercompact extender-based Příkrý forcing} (or, just, diagonal Merimovich forcing) as follows. A condition in $\mathbb P_\infty$ will be a sequence of length $\omega$ of the form
\[p=\langle f^p_0,\dots, f^p_{\ell(p)-1},\langle f^p_{\ell(p)}, A^p_{\ell(p)}\rangle,\langle f^p_{\ell(p)+1},A^p_{\ell(p)+1}\rangle\dots\rangle\]
satisfying the following conditions:
\begin{itemize}
    \item for each $n<\omega$, $f_n^p$ is a partial function from $\dom(f^p_n)\subseteq\lambda$ to $\kappa_n$ with $\dom(f^p_n)\in \mathcal D_n$ and $f^p_n(\kappa_n)>\kappa_{n-1}$ is an inaccessible cardinal;
    \item for each $n\ge \ell(p)$, $A_n^p\in E_n(\dom(f^p_n))$ and the sequence
    \[\langle \dom(f^p_n): n\ge \ell(p)\rangle\]
    is $\subseteq^\ast$-increasing.
\end{itemize}
Given two conditions $p$ and $q$ in $\mathbb P_\infty$, we say that $p\le^\circ q$ if and only if
\begin{itemize}
    \item $\ell(p)=\ell(q)$;
    \item $f^q_n\subseteq f^p_n$ for all $n<\omega$;
    \item for all $n\ge\ell(p)$, and all $\nu\in A^p_n$, it holds that
    \[(\nu\upharpoonright \mathrm{dom}(f^q_n))\in A^q_n.\]
\end{itemize}

Now, let $p\in\mathbb P_\infty$ and $\nu\in A^p_{\ell(p)}$. We define $p\cat\nu$ as the sequence
\[\langle f^p_0,\dots, f^p_{\ell(p)-1},f^p_{\ell(p)}\oplus \nu, \langle f^p_{\ell(p)+1}, (A^p_{\ell(p)+1})_{\langle\nu\rangle}\rangle,\langle f^p_{\ell(p)+2}, (A^p_{\ell(p)+2})_{\langle\nu\rangle}\rangle\dots\rangle\]
 where $f^p_{\ell(p)}\oplus \nu$ and $(A^p_n)_{\langle \nu\rangle}$ for $n>\ell(p)$ are defined as follows:
    \begin{itemize}
        \item $f^p_{\ell(p)}\oplus \nu$ is the function with domain $\dom(f^p_{\ell(p)})$ and values
        $$(f^p_{\ell(p)}\oplus \nu)(\alpha):=\begin{cases}
            \nu(\alpha) & \text{if}\ \alpha\in\dom(\nu);\\
            f^p_{\ell(p)}(\alpha) & \text{if}\ \alpha\not\in\dom(\nu).
        \end{cases}
        $$
        \item $(A^p_n)_{\langle \nu\rangle}:=\{\eta\in A^p_n\mid \nu\prec \eta\}$ for each $n>\ell(p)$.
    \end{itemize}
More generally, given a finite $\ll$-increasing sequence
\[\vec\nu=\langle \nu_{\ell(p)},\dots,\nu_{m-1}\rangle\]
where each $\nu_n$ belongs to $A^p_n$, one defines $p\cat\vec\nu$ as \[p\cat\vec\nu=(\cdots((p^\curvearrowright \vec\nu_{\ell(p)})^\curvearrowright \vec\nu_{\ell(p)+1})^\curvearrowright\cdots\vec\nu_{m-1}.\]

Now we are ready to define the ordering $\le_\infty$ on $\mathbb P_\infty$. We say that $p\le q$ if and only if $\ell(q)\le \ell(p)$ and there is a $\ll$-increasing sequence $\vec\nu$ such that $p\le^\circ q^\curvearrowright\vec\nu$. Next, we define the weak system structure of $\mathbb P_\infty$. Consider the downwards directed posetal category $\mathsf D$ where objects are $\subseteq^\ast$-increasing sequences $\langle d_n:n<\omega\rangle$ where each $d_n\in \mathcal D_n$. Given two objects $d$ and $e$, we have an arrow $e\to d$ if and only if $d_n\subseteq^\ast e_n$ for each $n<\omega$. This is downwards directed, because given any two objects $d$ and $e$ we can consider $f:=\langle d_n\cup e_n: n<\omega\rangle$, that is still an object of $\mathsf D$, and is such that $f\to e$ and $f\to d$ since $d_n\subseteq^\ast d_n\cup e_n$ and $e_n\subseteq^\ast d_n\cup e_n$ for each $n<\omega$.\medskip

For each object $d$ in the category $\mathsf D$, we define $\mathbb P_d$ as the subposet of $\mathbb P_\infty$ consisting of all elements $p$ such that $\dom(f^p_n)\subseteq d_n$ for each $n<\omega$.
\end{definition}

\begin{lemma}
    If $\lambda$ is inaccessible and $\kappa=\sup_{n<\omega}\kappa_n <\lambda$, where each of the $\kappa_n$ is $({<}\lambda)$-supercompact, the diagonal Merimovich forcing is a $(\kappa,\lambda)$-nice weak $\Sigma$-system.
\end{lemma}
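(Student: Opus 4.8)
The plan is to check in turn the structural requirements of Definition \ref{def: nice systems} (being a weak $\Sigma$-system) and then the three clauses of Definition \ref{nice systems: capturing, bounded, amenable}, adapting to the diagonal setting the argument given for the ordinary Merimovich forcing in \cite[Section 4.1]{DPT}. For the weak-system structure one first has to introduce the morphisms: take $\pi_{e,d}\colon\mathbb P_e\to\mathbb P_d$ and $\pi_{\infty,d}\colon\mathbb P_\infty\to\mathbb P_d$ to be the coordinate-wise restrictions, sending $p$ to the sequence with entries $f^p_n\upharpoonright d_n$ and $A^p_n\upharpoonright d_n$. These are order-preserving, and given $p$ the condition $p^\ast\le^\circ p$ obtained by shrinking each $A^p_n$ to the $\nu$ whose restriction to $d_n$ already lies in the relevant measure witnesses the second clause of Definition \ref{weakproj}; the identities $\pi_{d,d}=\mathrm{id}$, $\pi_{e,f}\circ\pi_{d,e}=\pi_{d,f}$ and the cone equations are immediate. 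That each $\mathbb P_d$ and $\mathbb P_\infty$ is $\Sigma$-Příkrý for $\langle\kappa_n\rangle$ is checked with: the given length $\ell$; the cardinal $\mu=\lambda$ for $\mathbb P_\infty$ (resp.\ the value $\mu_d$ that $\mathbb P_d$ forces to equal $\kappa^+$); a coloring $c(p)$ coding the stem $\langle f^p_0,\dots,f^p_{\ell(p)}\rangle$ by an ordinal $<\mu$; and $m(p,q)$, $w(p,q)$ given by the truncations of the $\ll$-increasing sequence $\vec\nu$ witnessing $q\le p$. The directed-closure clause uses $\kappa_n$-completeness of the $E_n$; the bound $|W(p)|<\mu$ holds because $W(p)$ is parametrised by finite $\ll$-increasing sequences drawn from the $A^p_n\subseteq\Ob_n(\dom(f^p_n))$ and $|\Ob_n(\dom(f^p_n))|<\lambda$ by inaccessibility; the final Rowbottom-type clause — whence the strong Příkrý property of Lemma \ref{strongPříkrýproperty} — is proved by a diagonal-intersection argument exploiting that each $E_n$ projects to the normal measure $\mathcal U_n$ on $\kappa_n$. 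Finally $\mathbb 1\Vdash\mu=\kappa^+$ follows from Lemma \ref{bddsbs} (no new bounded subsets of $\kappa$) together with a $\Delta$-system argument on stems giving the $\lambda$-chain condition of $\mathbb P_\infty$, using the inaccessibility of $\lambda$.

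For $\lambda$-boundedness: each object $d=\langle d_n\rangle$ of $\mathsf D$ has all $d_n$ of size $<\lambda$, so $|\Ob_n(d_n)|<\lambda$, every condition of $\mathbb P_d$ is an $\omega$-sequence of entries inside $H(\lambda)$, and hence $\mathbb P_d\in H(\lambda)$. For $\kappa$-capturing: fix a generic $G$, a set $x\subseteq\kappa$ in $V[G]$, an object $d$, a condition $p\in G$ and a name $\dot x$ with $p$ forcing $\dot x\subseteq\check\kappa$; applying the strong Příkrý property once for each $\alpha<\kappa$, and realising the resulting requirements simultaneously by means of $\kappa_n$-directed-closure, one builds an object $e\to d$ (with each $e_n\in\mathcal D_n$, since only $\le\kappa$-many coordinates per level are consumed) such that the value of $\dot x$ is already computed from $\pi''_{\infty,e}G$; hence $x\in V[\pi''_{\infty,e}G]$.

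For amenability to interpolation: $\cof(\kappa)^{V[G]}=\omega$ is witnessed by the $\omega$-sequence $\langle f^p_n(\kappa_n):n<\omega\rangle$ read off any $p\in G$, which is cofinal in $\kappa$ since $\kappa=\sup_n\kappa_n$. For $\lambda=(\kappa^+)^{V[G]}$, the $\lambda$-chain condition preserves $\lambda$ and all cardinals above it, while every $V$-cardinal $\alpha$ with $(\kappa^+)^V\le\alpha<\lambda$ is collapsed to $\kappa$: the extender coordinates of the generic supply a surjection of $\kappa$ onto $\alpha$ — this is exactly where the $({<}\lambda)$-supercompactness of the $\kappa_n$ and the requirement $j_n(\kappa_n)>\lambda$ enter — so that $\lambda$ becomes the successor of $\kappa$ in $V[G]$.

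The main obstacle is the Rowbottom-type clause of $\Sigma$-Příkrýness (equivalently, the strong Příkrý property), together with the closely related $\kappa$-capturing argument, where the features distinguishing the diagonal forcing from its non-diagonal ancestor — the $\subseteq^\ast$-coherence linking the domains $\dom(f^p_n)$ across levels and the level-sensitive order $\ll$ between objects of $\Ob_n$ and $\Ob_m$ for $n\le m$ — must be threaded carefully through the diagonal-intersection bookkeeping. Most of this is already carried out in \cite{DPT}; for the present lemma it suffices to confirm that the diagonal structure does not disturb those arguments, the only substantive changes being the replacement of the single measure $E$ by the coherent sequence $\langle E_n:n<\omega\rangle$ and of $\subseteq$ by $\subseteq^\ast$ in the relevant amalgamations of domains.
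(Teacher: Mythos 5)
The paper offers no argument for this lemma at all: it simply cites \cite[Section 4.2]{DPT}, so your outline — which identifies the verifications required by Definitions \ref{def: nice systems} and \ref{nice systems: capturing, bounded, amenable} and defers the substantive ones (the Rowbottom-type clause, the capturing argument, the collapsing of cardinals in $(\kappa,\lambda)$) to that same source — is essentially the same approach, just with the checklist made explicit. One detail in your sketch should be corrected: the colouring $c$ witnessing the linked-ness clause must code the \emph{entire} sequence $\langle f^p_n : n<\omega\rangle$ together with $\ell(p)$, not merely the initial segment $\langle f^p_0,\dots,f^p_{\ell(p)}\rangle$, since two conditions agreeing only on their finite stems may carry incompatible partial functions $f^p_n$, $f^q_n$ at levels $n>\ell(p)$ and then admit no common extension; with this repair the count of colours is still $\lambda^{\omega}=\lambda=\mu$ by inaccessibility of $\lambda$, so the clause $c:\mathbb P\to\mu$ survives. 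The rest of your sketch (coordinate-wise restrictions as the weak projections, $\lambda$-boundedness from $|d_n|<\lambda$, cofinality $\omega$ read off the generic stems, and $\lambda=(\kappa^+)^{V[G]}$ from the $\lambda$-c.c.\ plus the collapsing supplied by the extender coordinates) is consistent with what \cite{DPT} actually carries out.
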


\begin{proof}
    The proof of this fact is in \cite[Section 4.2]{DPT}.
\end{proof}
\section{Axiomatising Higher Solovay Models}\label{sec: Higher Solovay Models}
\color{black}

Now we are finally ready to define when a model $L(V_{\kappa+1})^M$ is a $\kappa$-Solovay model over $V$, following the footsteps of Definition \ref{Solovay}. Recall that, in this context, $\kappa$ is a fixed strong limit cardinal, supremum of a non-decreasing $\Sigma$-sequence $\langle \kappa_n:n<\omega\rangle$.

\begin{definition}\label{k-Solovay}
Let $V\subseteq M$ be models of \textsf{ZFC}. We say that the model $L(V_{\kappa+1})^M$ is a \emph{$\kappa$-Solovay model} over $V$ if $M$ satisfies the following conditions:
\begin{itemize}
\item for all subsets $x\subseteq\kappa$, the ordinal $(\kappa^+)^M$ is an inaccessible cardinal in $V[x]$;
\item for $x\subseteq \kappa$ in $M$, there is a $\Sigma$-Příkrý forcing $\mathbb P\in V$ with $|\mathbb P|^M\le\kappa$, and $g\subseteq\mathbb P$ generic over $V$ with $V[x]\subseteq V[g]$ and $\cof(\kappa)^{V[g]}=\omega$.
\end{itemize}
\end{definition}

Observe that, in this context,  the initial segment of the universe up to $\kappa$ cannot change:

\begin{lemma}
    Let $V\subseteq M$ be models of \textsf{ZFC}, with $L(V_{\kappa+1})^M$ $\kappa$-Solovay over $V$. Then $V_\kappa=V_\kappa^M$.
\end{lemma}

\begin{proof}
First we verify that $\wp^M(\alpha)= \wp(\alpha)$, for all $\alpha<\kappa$. If $a\in \wp^M(\alpha)$, then by Definition \ref{k-Solovay} there is a $\Sigma$-Příkrý forcing $\mathbb P\in V$, and $g\subseteq\mathbb P$ generic over $V$ with $a\in V[g]$. Lemma \ref{bddsbs} ensures that $\mathbb P$ does not add any new bounded subset of $\kappa$, yielding $a\in V$. In particular, $\kappa$ is strong limit in $M$. Now let $x\in V_\kappa^M$. Since $\kappa$ is a strong limit in $M$, we get $|\mathrm{trcl}(\{x\})|^M<\kappa$, and so there is a bijection in $M$ from $\mathrm{trcl}(\{x\})$ to some $\alpha<\kappa$. Let $E$ be the relation on $\alpha$ such that \[\langle \alpha, E\rangle \simeq \langle \mathrm{trcl}(\{x\}), \in\rangle.\] Since $E$ is canonically codeable as a subset of $\alpha$ and $\wp^M(\alpha)= \wp(\alpha)$, we deduce that $E\in V$. The Mostowski collapse of $\langle \mu, E\rangle$ computed in $V$ is $\langle \mathrm{trcl}(\{x\}), \in\rangle\in V$, hence $x\in V$.\end{proof}

As we already mentioned, in the article \cite{DPT} Dimonte, Poveda and the second author prove some regularity properties for the $\kappa$-reals of  $L(V_{\kappa+1})^{V[G]}$ for $G\subseteq\mathbb P_\infty$ generic over $V$, where $\mathbb P_\infty$ is a $(\kappa,\lambda)$-nice weak $\Sigma$-system. Our first new result, the higher analogue of Theorem \ref{welldef}, states that the models studied in \cite{DPT} are $\kappa$-Solovay model.

\begin{theorem}\label{k-welldef}
    Let $\mathbb P_\infty$ be a $(\kappa,\lambda)$-nice weak $\Sigma$-system, and let $G\subseteq \mathbb P_\infty$ be a generic filter over $V$. Then $L(V_{\kappa+1})^{V[G]}$ is $\kappa$-Solovay over $V$.
\end{theorem}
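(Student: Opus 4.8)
The plan is to verify the three conditions of Definition \ref{k-Solovay} for $M := V[G]$. The first condition, $V_\kappa = V_\kappa^{V[G]}$, is immediate from Lemma \ref{bddsbs}: forcing with a $\Sigma$-Příkrý poset adds no bounded subsets of $\kappa$, hence no new elements of $V_\kappa$, and since $\mathbb P_\infty$ is itself $\Sigma$-Příkrý this gives $V_\kappa = V_\kappa^{V[G]}$ directly. (One should note in passing that $\kappa$ remains a strong limit and that the $V_{\kappa+1}$ of $V[G]$ is genuinely larger, which is what makes the model interesting, but that is not needed for the definition.)

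For the second condition, I would use amenability to interpolation together with $\lambda$-boundedness. Amenability gives $\lambda = (\kappa^+)^{V[G]}$, so the target ordinal $(\kappa^+)^M$ is exactly $\lambda$, which is inaccessible in $V$ by hypothesis. Now fix $x \subseteq \kappa$ in $V[G]$; I must show $\lambda$ is inaccessible in $V[x]$. By Theorem \ref{fully} (the $({<}\lambda)$-capturing property, which applies since $\mathbb P_\infty$ is continuous and $(\kappa,\lambda)$-nice), there is an object $e$ of $\mathsf D$ with $x \in V[\pi''_{\infty,e}G]$, and by $\lambda$-boundedness $\mathbb P_e \in H(\lambda)$, so $\mathbb P_e$ has size $<\lambda$; since $\lambda$ is inaccessible in $V$, a forcing of size $<\lambda$ preserves its inaccessibility, so $\lambda$ is inaccessible in $V[\pi''_{\infty,e}G] \supseteq V[x]$, and a fortiori in the smaller model $V[x]$. (Here I use that $\pi''_{\infty,e}G$ is generic for $\mathbb P_e$, by Lemma \ref{filterweak}, for a poset in $V$ of size $<\lambda$.)

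The third condition is the substantive one and where most of the work lies. Fix $\alpha < \lambda = (\kappa^+)^{V[G]}$ and $x \subseteq \alpha$ in $V[G]$. Applying Theorem \ref{fully} again — this time crucially using the $({<}\lambda)$-capturing strengthening rather than mere $\kappa$-capturing, since $\alpha$ can be strictly between $\kappa$ and $\lambda$ — we get an object $e$ of $\mathsf D$ with $x \in V[\pi''_{\infty,e}G]$. Set $\mathbb P := \mathbb P_e$ and $g := \pi''_{\infty,e}G$, which is $\mathbb P_e$-generic over $V$ by Lemma \ref{filterweak}. Then $V[x] \subseteq V[g]$ by construction. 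It remains to check the two side conditions: that $|\mathbb P_e|^{V[G]} \le \kappa$, and that $\cof(\kappa)^{V[g]} = \omega$. For the first, $\lambda$-boundedness gives $\mathbb P_e \in H(\lambda)^V$, so $|\mathbb P_e|^V < \lambda$; since $\lambda = (\kappa^+)^{V[G]}$, we get $|\mathbb P_e|^{V[G]} \le \kappa$. (One subtlety worth spelling out: the cardinal computation happens in $V[G]$, where $\lambda$ is $\kappa^+$, so any $V$-set of $V$-cardinality $<\lambda$ has $V[G]$-cardinality at most $\kappa$.) For the second, $\mathbb P_e$ is a $\Sigma$-Příkrý forcing, so by Lemma \ref{bddsbs} it adds no bounded subsets of $\kappa$ and in particular the $\kappa_n$ remain regular in $V[g]$; moreover $\Sigma$-Příkrý forcings are designed precisely to singularise $\kappa$ to cofinality $\omega$ — I would cite the generic sequence of length $\omega$ built from the length function $\ell$, or alternatively derive $\cof(\kappa)^{V[g]}=\omega$ from the fact that $\cof(\kappa)^{V[G]}=\omega$ (amenability to interpolation) is witnessed by a sequence that, by $({<}\lambda)$-capturing applied to that sequence coded as a subset of some $\alpha < \lambda$, can be pulled back into some further $V[\pi''_{\infty,e'}G]$; absorbing $e'$ and $e$ into a common lower bound $d$ of $\mathsf D$ and using $V[\pi''_{\infty,d}G] \supseteq V[\pi''_{\infty,e}G], V[\pi''_{\infty,e'}G]$ then finishes it, replacing $\mathbb P_e$ by $\mathbb P_d$.

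The main obstacle is the last point about $\cof(\kappa)^{V[g]} = \omega$. It is not enough that $\cof(\kappa)^{V[G]} = \omega$, since $g$ generates a smaller model; the cleanest route is to absorb a witnessing $\omega$-sequence into the same intermediate extension using $({<}\lambda)$-capturing and directedness of $\mathsf D$, which is exactly why the continuity hypothesis (hence Theorem \ref{fully}) was isolated in Section §3. A secondary care point is making sure the cardinal arithmetic $|\mathbb P_e| \le \kappa$ is evaluated in the correct model; everything is consistent once one remembers $\lambda = (\kappa^+)^{V[G]}$.
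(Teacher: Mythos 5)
Your proposal is correct and follows essentially the same route as the paper: Lemma \ref{bddsbs} together with the standard coding of elements of $V_\kappa$ as bounded subsets of $\kappa$ for the first condition, capturing plus $\lambda$-boundedness and amenability to interpolation for the second, and $({<}\lambda)$-capturing plus directedness of $\mathsf D$ to absorb both $x$ and a cofinalising $\omega$-sequence into a single $V[\pi''_{\infty,e}G]$ for the third. The one caveat is that your first suggested route for $\cof(\kappa)^{V[g]}=\omega$ (that an arbitrary $\Sigma$-Příkrý poset generically singularises $\kappa$ via its length function) is not justified by the axioms as stated, but your fallback — capturing a witnessing sequence from $V[G]$ and passing to a common lower bound in $\mathsf D$ — is exactly the paper's argument.
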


\begin{proof}
    We verify that the conditions in Definition \ref{k-Solovay} are met. First, we have to prove that if $x\subseteq\kappa$ belongs to $V[G]$, then the ordinal $(\kappa^+)^{V[G]}$ is inaccessible in $V[x]$. Fix $x\subseteq\kappa$. Since $\mathbb P_\infty$ is $\kappa$-capturing, there is an object $e$ in $\mathsf D$ such that $x\in V[\pi''_{\infty,e}G]$. Now, since $\mathbb P_\infty$ is $\lambda$-bounded, we have $\mathbb P_e \in H(\lambda)$, and since $\mathbb P_e$ cannot affect the cardinal structure above its size, then $\mathbb P_e$ forces that ``$\lambda$ is inaccessible''. But $\mathbb P_\infty$ is amenable to interpolations, hence $\lambda=(\kappa^+)^{V[G]}$, so we get that $(\kappa^+)^{V[G]}$ is inaccessible in $V[\pi_{\infty,e}''G]$, and, since $x \in V[\pi_{\infty,e}''G]$, $(\kappa^+)^{V[G]}$ is inaccessible in $V[x]$.\medskip

Now we prove that if $x\subseteq\alpha$, there is a $\Sigma$-Příkrý $\mathbb P\in V$ with $|\mathbb P|^{V[G]}\le\kappa$, and a $g\subseteq\mathbb P$ generic over $V$ such that $V[x]\subseteq V[g]$ and $\cof(\kappa)^{V[g]}=\omega$. Since $\mathbb P_\infty$ is amenable to interpolations, $\cof(\kappa)^{V[G]}=\omega$. In particular, in $V[G]$ there is a countable sequence that is cofinal in $\kappa$, and such a sequence is a subset of $\kappa$. Hence, by the $\kappa$-capturing property, there is some object $d$ in $\mathsf D$ such that in the extension $V[\pi''_{\infty,d}G]$ the cardinal $\kappa$ has cofinality $\omega$, and crucially $x\in V[\pi''_{\infty,d}G]$. 
  Notice also that, since $\mathbb P_\infty$ is $\lambda$-bounded, then $\mathbb P_d$ has cardinality less than $(\kappa^+)^{V[G]}$, hence $|\mathbb P_d|^{V[G]}\le \kappa$. 
  \end{proof}

\begin{corollary}
Let $G$ be a $V$-generic filter for the Merimovich forcing relative to $\kappa<\lambda$, introduced in Definition \ref{merimovich}. Then $L(V_{\kappa+1})^{V[G]}$ is a $\kappa$-Solovay model over $V$.
\end{corollary}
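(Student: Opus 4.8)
The plan is to derive this corollary directly from Theorem~\ref{k-welldef}. That theorem says precisely that whenever $\mathbb{P}_\infty$ is a \emph{continuous} $(\kappa,\lambda)$-nice weak $\Sigma$-system and $G\subseteq\mathbb{P}_\infty$ is generic over $V$, then $L(V_{\kappa+1})^{V[G]}$ is $\kappa$-Solovay over $V$. So the only thing left to check is that the Merimovich forcing of Definition~\ref{merimovich}, for $\kappa<\lambda$ with $\kappa$ being $({<}\lambda)$-supercompact and $\lambda$ inaccessible, satisfies both hypotheses.

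First I would invoke the lemma immediately following Definition~\ref{merimovich}, which states (citing \cite[Section 4.1]{DPT}) that under exactly these large-cardinal assumptions the Merimovich forcing is a $(\kappa,\lambda)$-nice weak $\Sigma$-system. Then I would invoke Lemma~\ref{merifully}, which establishes that the Merimovich forcing is moreover a continuous weak system: every condition $\langle f,A\rangle$ has $\dom(f)\in\mathcal D$, so every element of $\mathbb{B}_\infty$ lives in some $\mathbb{B}_d$, and the union characterisation of continuity from the remark after Definition~\ref{cont} applies. Combining these two facts, the Merimovich forcing is a continuous $(\kappa,\lambda)$-nice weak $\Sigma$-system, so Theorem~\ref{k-welldef} applies verbatim to any $V$-generic $G$ for it.

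There is essentially no obstacle here — this is a pure instantiation corollary, and the real work was done in Section~3 (Theorem~\ref{fully}), Section~4 (Lemma~\ref{merifully}), and Theorem~\ref{k-welldef} itself. If anything deserves a sentence of care it is simply matching the hypotheses: one should note that the large-cardinal setup named in the corollary (``the Merimovich forcing relative to $\kappa<\lambda$'') is by convention the setup of Definition~\ref{merimovich}, i.e.\ $\kappa$ is $({<}\lambda)$-supercompact and $\lambda$ is inaccessible, which are exactly the hypotheses required by the two lemmata being cited. I would therefore keep the proof to two or three lines.

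\begin{proof}
By the lemma following Definition~\ref{merimovich} (see \cite[Section 4.1]{DPT}), the Merimovich forcing relative to $\kappa<\lambda$ is a $(\kappa,\lambda)$-nice weak $\Sigma$-system, and by Lemma~\ref{merifully} it is a continuous weak system. Hence it is a continuous $(\kappa,\lambda)$-nice weak $\Sigma$-system, and Theorem~\ref{k-welldef} applies: for any $G\subseteq\mathbb{P}_\infty$ generic over $V$, the model $L(V_{\kappa+1})^{V[G]}$ is $\kappa$-Solovay over $V$.
\end{proof}
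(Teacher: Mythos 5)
Your proof is correct and is exactly the instantiation the paper intends: the paper states this corollary without proof, leaving it as an immediate consequence of Theorem \ref{k-welldef} together with the lemma following Definition \ref{merimovich} and Lemma \ref{merifully}, which is precisely what you wrote.
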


\begin{corollary}
Let $G$ be a $V$-generic filter for the diagonal Merimovich forcing relative to $\kappa<\lambda$, introduced in Definition \ref{meridiag}. Then $L(V_{\kappa+1})^{V[G]}$ is a $\kappa$-Solovay model over $V$.
\end{corollary}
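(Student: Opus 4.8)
The plan is to obtain this corollary as an immediate consequence of Theorem~\ref{k-welldef}: that theorem says that if $\mathbb P_\infty$ is a continuous $(\kappa,\lambda)$-nice weak $\Sigma$-system and $G\subseteq\mathbb P_\infty$ is $V$-generic, then $L(V_{\kappa+1})^{V[G]}$ is $\kappa$-Solovay over $V$. So all that remains is to check that the diagonal Merimovich forcing of Definition~\ref{meridiag}, under the stated hypotheses on $\kappa<\lambda$, meets both hypotheses of that theorem.

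First I would cite the lemma immediately preceding Lemma~\ref{diagomerifully} (whose proof is given in \cite[Section 4.2]{DPT}): when $\lambda$ is inaccessible and $\kappa=\sup_{n<\omega}\kappa_n$ with each $\kappa_n$ being $({<}\lambda)$-supercompact, the diagonal Merimovich forcing is a $(\kappa,\lambda)$-nice weak $\Sigma$-system. Then I would invoke Lemma~\ref{diagomerifully}, which establishes that this weak system is in addition \emph{continuous} in the sense of Definition~\ref{cont} --- the point being that every condition $p\in\mathbb P_\infty$ has all coordinate domains $\dom(f^p_n)$ in $\mathcal D_n$, so every element of $\mathbb B_\infty$ already lies in some $\mathbb B_d$ and hence $\bigcup_d \imath''_{d,\infty}\mathbb B_d=\mathbb B_\infty$. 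With both hypotheses in place, applying Theorem~\ref{k-welldef} to $\mathbb P_\infty$ and $G$ yields the conclusion.

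Since everything here is a formal assembly of results already proved, there is no genuine obstacle: the substantive work --- niceness from \cite{DPT}, continuity from Lemma~\ref{diagomerifully}, and the derivation of the three clauses of Definition~\ref{k-Solovay} from continuity and niceness carried out in the proof of Theorem~\ref{k-welldef} --- is already done. The only thing one must be slightly careful about is that the concrete poset spelled out in Definition~\ref{meridiag} genuinely instantiates the abstract weak-system framework of Section~§3, that is, that the projections $\pi_{e,d}$ are weak projections cohering into a cone over the directed diagram indexed by $\mathsf D$; but this is precisely the content of the cited lemma from \cite{DPT}, so nothing further is required.
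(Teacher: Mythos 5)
Your proposal is correct and matches the paper's intent exactly: the corollary is an immediate consequence of the niceness lemma from \cite[Section 4.2]{DPT}, the continuity established in Lemma \ref{diagomerifully}, and Theorem \ref{k-welldef}. Nothing further is needed.
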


We end this section by proving a couple of properties of $\kappa$-Solovay models. In order to do that, recall the Trace Lemma from the article \cite{DPT}:

\begin{lemma}\label{OGtrace}
Let $\mathbb P$ be a $\Sigma$-Příkrý forcing and let $G\subseteq\mathbb P$ be generic over $V$. Suppose that in $V[G]$ it holds $\cof(\kappa)=\omega$, and $f:\kappa\to E$ is a function in $V[G]$ with $E\in V$. Then there is a sequence $\langle B_n:n<\omega\rangle\in V[G]$ consisting of bounded subsets of $\kappa$ (each $B_n\in V$ by Lemma \ref{bddsbs}) with $\bigcup_{n<\omega} B_n=\kappa$ and $(f\upharpoonright B_n)\in V$ for all $n<\omega$.
\end{lemma}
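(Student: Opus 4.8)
The plan is to exploit the strong Příkrý property (Lemma \ref{strongPříkrýproperty}) together with the fact that $\Sigma$-Příkrý forcing adds no bounded subsets of $\kappa$ (Lemma \ref{bddsbs}), in the spirit of the classical "trace" arguments for Příkrý-type forcings. First I would fix a name $\dot f$ for the function $f:\kappa\to E$ and a condition $p_0\in G$ forcing that $\dot f$ is such a function with $E\in V$. Since $\cof(\kappa)^{V[G]}=\omega$, the generic filter $G$ determines, in the usual way for $\Sigma$-Příkrý forcing, an increasing sequence of lengths and a corresponding decomposition of $\kappa$; concretely, for each condition $q$ in the generic, the "stem" part $w(q,\cdot)$ and the length function $\ell$ organize the conditions below $q$ of the same length into $\kappa_{\ell(q)}$-directed-closed pieces. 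The key observation is that for each ordinal $\xi<\kappa$, the statement "$\dot f(\check\xi)=\check e$" for the correct $e$ is decided by the generic, and by the strong Příkrý property, below any condition $q$ there is $q'\le q$ with $\ell(q')=\ell(q)$ and some $n$ such that every extension $r\le q'$ with $\ell(r)\ge\ell(q')+n$ decides $\dot f(\check\xi)$.

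The central step is to run this argument uniformly. For each $n<\omega$, I would let $B_n$ be (roughly) the set of $\xi<\kappa$ such that the value $\dot f(\check\xi)$ is already decided by the stem of the generic condition of length $n+1$ that lies in $G$ — more precisely, decided by all extensions of that condition of the same length. Because the length-$n$ part of the forcing is $\kappa_n$-directed-closed and $\kappa_n<\kappa$, and because a single condition of length $n+1$ in $G$ together with its same-length extensions forms a $\kappa_n$-directed-closed piece, the values of $\dot f$ decided "at level $n$" in this way can be amalgamated into a single partial function; one checks this partial function is exactly $f\restriction B_n$ up to bookkeeping. That $\bigcup_{n<\omega}B_n=\kappa$ follows from the strong Příkrý property: every $\xi<\kappa$ gets its value $f(\xi)$ decided by \emph{some} finite stage, since $G$ is generic and meets the dense set of conditions deciding $\dot f(\check\xi)$, and by the strong form this happens already at a bounded-length stem. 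That each $B_n$ is a bounded subset of $\kappa$: since $B_n$ is defined from a condition of length $n+1$ in $G$ whose stem has size ${<}\mu=\kappa^+$ — actually I would need $B_n$ to be bounded, which should come from the fact that the relevant stem/decision data at a fixed finite level lives below some $\kappa_m<\kappa$, or alternatively by observing the $B_n$ can be chosen to be an increasing sequence of \emph{bounded} sets because $\cof(\kappa)=\omega$ lets us intersect with the cofinal $\omega$-sequence. Finally, $(f\restriction B_n)\in V$: this function is a bounded subset of $\kappa\times E\subseteq\kappa\times\mathrm{Ord}$, coded as a bounded subset of $\kappa$ (using $E\in V$ and that $\kappa$ is a strong limit so $|E\cap V_\kappa|$-type codings stay below $\kappa$), hence lies in $V$ by Lemma \ref{bddsbs}.

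The main obstacle I anticipate is making the decomposition $\langle B_n:n<\omega\rangle$ genuinely live in $V[G]$ and simultaneously guaranteeing each $B_n$ is \emph{bounded} in $\kappa$: the naive "decided at level $n$" set need not be bounded, since even a single level-$n$ stem has many same-length extensions ranging cofinally. The fix will likely be to combine the level-$n$ decision data with the countable cofinal sequence $\langle\gamma_n:n<\omega\rangle\in V[G]$ witnessing $\cof(\kappa)=\omega$, setting $B_n$ to be (the decided-by-level-$n$ set) $\cap\,\gamma_n$, or rather to argue that every $\xi<\gamma_n$ has its value decided by a uniformly chosen condition, using $\kappa_n$-directed-closedness to handle all ${<}\kappa_n$-many, then ${<}\gamma_n$-many after passing to a larger level. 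One then checks $\bigcup_n B_n=\kappa$ from cofinality and that the whole assignment $n\mapsto(f\restriction B_n)$ is computable in $V[G]$ from $\dot f$, $G$, and the sequence $\langle\gamma_n\rangle$. I would present the argument by first doing the case where $f$ is the characteristic function of a subset of $\kappa$ (so "$E=2$") to isolate the combinatorics, then note the general case is identical with $2$ replaced by $E$.
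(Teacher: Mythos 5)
The paper does not actually prove this lemma --- it is quoted from \cite[Lemma~3.3]{DPT} --- so there is no in-paper argument to compare against; I will assess your sketch on its own terms. Your overall skeleton is the right one (and is the same machinery this paper deploys in the proof of the Interpolation Lemma~\ref{lemma: interpolation}): use the Strong Příkrý Property together with $\kappa_{\ell(r)}$-directed-closedness of the fixed-length pieces to show that, densely often, one can find a single condition $r$ such that for every $\xi$ in a prescribed set of size ${<}\kappa_{\ell(r)}$ there is $k_\xi<\omega$ with every $q\le r$ of length $\ge\ell(r)+k_\xi$ deciding $\dot f(\check\xi)$; then pick such $r_n\in G$ for the $n$-th block, and let $B_{n,k}$ be the $\xi$'s with $k_\xi\le k$, decided by a single condition of $G$ of length $\ge\ell(r_n)+k$. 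Your closing paragraph gestures at exactly this.

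However, your stated justification of the key point $(f\upharpoonright B_n)\in V$ is wrong, and this is a genuine gap. You argue that $f\upharpoonright B_n$ is a bounded subset of $\kappa\times E$ and hence, after coding, a bounded subset of $\kappa$, so it lies in $V$ by Lemma~\ref{bddsbs}. But the \emph{range} of $f\upharpoonright B_n$ need not be bounded in $\kappa$ (nor contained in any small $V$-enumerable piece of $E$), so the code is not a bounded subset of $\kappa$. If this argument worked, the lemma would be trivial (take $B_n=\gamma_n$) and moreover false: for vanilla Příkrý forcing at a measurable $\kappa$ (a $\Sigma$-Příkrý poset with the constant $\Sigma$-sequence), let $f(n)$ be the $n$-th point of the Příkrý sequence for $n<\omega$ and $f(\xi)=0$ otherwise; then $f\upharpoonright\omega\notin V$ even though $\omega$ is bounded in $\kappa$. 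The correct reason $(f\upharpoonright B_n)\in V$ is that a \emph{single} condition $q\in G$ (which is an element of $V$) decides $\dot f(\check\xi)$ for every $\xi\in B_n$, so $f\upharpoonright B_n$ coincides with the function $\xi\mapsto$ ``the unique value forced by $q$'', which is definable in $V$ from $q$ and $\dot f$. Relatedly, your provisional definition of $B_n$ as the set of $\xi$ decided ``by all same-length extensions'' of a condition does not work as stated (distinct extensions may decide incompatibly, and only those in $G$ agree with $f$): you need the condition itself to decide. Finally, the claim $\bigcup_n B_n=\kappa$ cannot be extracted merely from ``$G$ meets each $D_\xi$'', since the witnesses vary over $\kappa$-many $\xi$ while you only choose countably many conditions; it requires the density, provable in $V$, of the sets of conditions that uniformly reduce the decision of all $\xi<\kappa_n$ to a finite length increase --- precisely the $R_n$-type density argument carried out in the proof of Lemma~\ref{lemma: interpolation}.
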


\begin{proof}
    The proof of this lemma can be found in \cite[Lemma 3.3]{DPT}.
\end{proof}

We can prove a version of this lemma for the abstract definition of $\kappa$-Solovay models:

\begin{lemma}[Trace Lemma]\label{trace}
Suppose $V\subseteq M$ are models of $\mathsf{ZFC}$ and assume that $L(V_{\kappa+1})^M$ is a $\kappa$-Solovay model over $V$. Let $f:\kappa\to E$ be any function in $M$, such that $E\in V$ and $|E|<(\kappa^+)^M$. There is a sequence $\langle B_n:n<\omega\rangle\in M$ consisting of bounded subsets of $\kappa$ such that $\bigcup_n B_n=\kappa$ and $(f\upharpoonright B_n)\in V$ for all $n<\omega$.
\end{lemma}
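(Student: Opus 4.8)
The strategy is to reduce the abstract statement to the concrete Trace Lemma \ref{OGtrace} of \cite{DPT} by exploiting the third clause of Definition \ref{k-Solovay}. First I would handle the case where $|E| \le \kappa$ (the general case reduces to this immediately: pick a bijection in $M$ between $E$ and some ordinal $\mu \le \kappa$, transport $f$ along it, and push the resulting $\langle B_n \rangle$ back; the restrictions $f\upharpoonright B_n$ live in $V$ iff the transported ones do, since the bijection is fixed and lies in... well, actually one must be slightly careful here). Let me instead encode things as a subset of an ordinal. Fix $f : \kappa \to E$ in $M$ with $E \in V$ and $|E|^M < (\kappa^+)^M$. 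Using a well-ordering of $E$ in $V$ of some order type $\delta$ (note $\delta$ need not be $<\kappa$ a priori, but $\delta < (\kappa^+)^M$ might fail — however $|E| < (\kappa^+)^M$ means $|E|^M \le \kappa$, so we may take $\delta < \kappa^+$ and in fact there is a surjection, even a bijection in $M$, from $\kappa$ onto $E$ when $E$ is infinite; the finite case is trivial). So without loss of generality $E = \kappa$, i.e. $f : \kappa \to \kappa$, and we may further code $f$ as a single subset $x = \{\langle \xi, f(\xi)\rangle : \xi < \kappa\} \subseteq \kappa \times \kappa$, identified via a pairing function in $V$ with a subset $x \subseteq \kappa$.

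\textbf{Applying clause (3).} Now apply the third bullet of Definition \ref{k-Solovay} to $x$ (taking $\alpha = \kappa$): there is a $\Sigma$-Příkrý forcing $\mathbb{P} \in V$ with $|\mathbb{P}|^M \le \kappa$, and a filter $g \subseteq \mathbb{P}$ generic over $V$ with $V[x] \subseteq V[g]$ and $\cof(\kappa)^{V[g]} = \omega$. In particular $f \in V[g]$. I would then invoke Lemma \ref{OGtrace} inside $V[g]$: there $\cof(\kappa) = \omega$, $f : \kappa \to \kappa$ (or $f : \kappa \to E$ viewing $E \in V$) lies in $V[g]$, so there is a sequence $\langle B_n : n < \omega\rangle \in V[g]$ of bounded subsets of $\kappa$, each in $V$ by Lemma \ref{bddsbs}, with $\bigcup_n B_n = \kappa$ and $(f \upharpoonright B_n) \in V$ for all $n$.

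\textbf{Pulling back into $M$.} The remaining point is that the sequence $\langle B_n : n < \omega\rangle$ lies in $M$, not merely in $V[g]$. This is where one must check that $V[g]$ sits inside $M$, which is \emph{not} part of the definition --- we only have $V[x] \subseteq V[g]$, and $V[g]$ may contain information not in $M$. So the naive pull-back fails, and this is the main obstacle. The fix is to not extract the sequence directly but to extract \emph{enough of it} that can be reconstructed in $M$: observe that once we know, for each $n$, that $B_n \in V$ and $f \upharpoonright B_n \in V$, the pair $\langle B_n, f\upharpoonright B_n\rangle$ is determined by a single element of $V$. What $V[g]$ gives us beyond $M$ is merely the \emph{enumeration} $n \mapsto B_n$. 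But in fact we don't need that particular enumeration: working in $M$, consider the set $\mathcal{S} := \{\, b \in V : b \text{ bounded in } \kappa,\ (f \upharpoonright b) \in V \,\}$, which is a subset of $V$ definable in $M$ from $f$ and the class $V$ (the latter being available since $V_\kappa = V_\kappa^M$ and, more to the point, $V$ is a definable class over $M$ in the relevant sense for $\kappa$-sized objects --- here we use that all the relevant $b$ and $f\upharpoonright b$ are bounded subsets of $\kappa$, hence elements of $V_\kappa = V_\kappa^M$). By the $V[g]$-argument, $\bigcup \mathcal{S} \supseteq \bigcup_n B_n = \kappa$, so $\mathcal{S}$ is a family of bounded subsets of $\kappa$, lying in $V_\kappa^M$, covering $\kappa$. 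Since $\cof(\kappa)^M = \omega$ (as $\cof(\kappa)^{V[g]} = \omega$ and... again one must be careful --- but $\cof(\kappa)^M = \omega$ does follow, since clause (3) applied to any $x$ together with a cofinal $\omega$-sequence being present in $V[g]$, combined with the Trace Lemma machinery; alternatively one shows directly $M$ has a cofinal $\omega$-sequence in $\kappa$ from the structure of $\kappa$-Solovay models) and since $\mathcal{S}$ covers $\kappa$ by bounded sets, in $M$ we may choose a countable subfamily $\{B'_n : n < \omega\} \subseteq \mathcal{S}$ with $\bigcup_n B'_n = \kappa$ --- concretely, fix a cofinal $\langle \gamma_n : n<\omega\rangle$ in $\kappa$ in $M$, and for each $n$ pick (in $M$) some $B'_n \in \mathcal{S}$ with $\gamma_n \in B'_n$. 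Then $\langle B'_n : n < \omega\rangle \in M$ is as required: each $B'_n$ is a bounded subset of $\kappa$, $(f \upharpoonright B'_n) \in V$ by membership in $\mathcal{S}$, and $\bigcup_n B'_n = \kappa$. The subtle verifications --- that $\mathcal S$ is genuinely a set and definable over $M$, that $\cof(\kappa)^M = \omega$, and that $\mathcal S$ really covers $\kappa$ (which is exactly what Lemma \ref{OGtrace} in $V[g]$ buys us) --- are what the written proof needs to nail down carefully.
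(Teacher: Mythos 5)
Your overall strategy -- reduce to a function into an ordinal below $(\kappa^+)^M$, code it as a subset of that ordinal, invoke clause (3) of Definition \ref{k-Solovay} to get a $\Sigma$-Příkrý $\mathbb P$ and a $V$-generic $g$ with $V[f]\subseteq V[g]$ and $\cof(\kappa)^{V[g]}=\omega$, and then apply Lemma \ref{OGtrace} inside $V[g]$ -- is exactly the paper's proof. Where you diverge is at the very end, and that is where the proposal develops a genuine gap. You assert that $V[g]\subseteq M$ ``is not part of the definition'', but the quantifier ``there is \dots $g\subseteq\mathbb P$ generic over $V$'' in Definition \ref{k-Solovay} is a condition that \emph{$M$ satisfies}, i.e.\ the witness $g$ is an element of $M$; this is how the clause is verified in Theorem \ref{k-welldef} (the witness is $\pi''_{\infty,e}G\in V[G]=M$) and how it is consumed in the Interpolation Lemma (which explicitly takes $g\in M$). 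With $g\in M$ one has $V[g]\subseteq M$, the sequence $\langle B_n:n<\omega\rangle$ produced by Lemma \ref{OGtrace} already lies in $M$, and the proof is finished. Your workaround is therefore unnecessary -- and, as written, it does not work. The class $\mathcal S=\{b: b\in V,\ b\text{ bounded in }\kappa,\ (f\upharpoonright b)\in V\}$ must be definable over (or at least amenable to) $M$ in order to choose $B'_n\in\mathcal S$ with $\gamma_n\in B'_n$ \emph{inside} $M$ and assemble the sequence there; an externally chosen $\omega$-sequence of elements of $M$ need not belong to $M$. You justify definability by saying the relevant objects live in $V_\kappa=V_\kappa^M$, but that is false for $f\upharpoonright b$: if $f''b$ is cofinal in $\kappa$ (possible since $|b|$ can exceed $\cof(\kappa)=\omega$), or if $E$ has rank $\ge\kappa$, then $f\upharpoonright b$ has rank $\ge\kappa$ and is not in $V_\kappa$, so ``$(f\upharpoonright b)\in V$'' cannot be read off from $V_\kappa=V_\kappa^M$ and is not obviously an $M$-definable predicate for an arbitrary inner model $V$ of $M$. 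Your parallel claim that $\cof(\kappa)^M=\omega$ suffers from the same circularity under your reading.

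A smaller but real problem is the opening reduction. You replace $E$ by $\kappa$ using a bijection that exists ``in $M$''; but if that bijection is not in $V$, then knowing that the transported restrictions lie in $V$ tells you nothing about whether $f\upharpoonright B_n\in V$. The correct move (which you gesture at with the $V$-well-ordering but then abandon) is the paper's: take the bijection $b\colon E\to\alpha$ in $V$ with $\alpha=|E|^V<(\kappa^+)^M$, work with $b\circ f\colon\kappa\to\alpha$, and code it as a subset of $\alpha$ via Gödel pairing -- there is no need to get down to $E=\kappa$, since clause (3) applies to any $\alpha<(\kappa^+)^M$ and Lemma \ref{OGtrace} to any target $E\in V$.
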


\begin{proof}
First, notice that if $E\in V$ is such that $\left|E\right|<(\kappa^+)^M$, then there is some cardinal $\alpha<(\kappa^+)^M$ such that $E$ and $\alpha$ are in bijection, and the bijection $b:E\to \alpha$ is in $V$. So it is enough to prove the theorem for $b\circ f$ in place of $f$ (and then apply $b^{-1}$). Namely, it is enough to prove the theorem when $E$ is some $\alpha<(\kappa^+)^M$.\medskip

In this case, notice that a map $f:\kappa\to\alpha$ can be seen as a subset $f\subseteq\alpha\times\alpha$, and by using the Gödel pairing function we can encode in a definable way $f$ as a subset of $\alpha$. Thanks to Definition \ref{k-Solovay}, there is a $\Sigma$-Příkrý forcing $\mathbb P$ such that $|\mathbb P|^M\le \kappa$ and a filter $g\subseteq\mathbb P$ that is generic over $V$ with $V[f]\subseteq V[g]$, and also $\cof(\kappa)^{V[g]}=\omega$.\medskip

Now we apply Lemma \ref{OGtrace} to this $\Sigma$-Příkrý forcing poset and to the map $f:\kappa\to\alpha$ in $V[g]$, finding a sequence $\langle B_n:n<\omega\rangle\in V[g]\subseteq M$ consisting of bounded subsets of $\kappa$ such that $\bigcup_n B_n=\kappa$ and $(f\upharpoonright B_n)\in V$ for all $n<\omega$, as desired.
\end{proof}

\begin{lemma}[Interpolation Lemma]\label{lemma: interpolation}
    Let $V\subseteq M$ be models of \textsf{ZFC} and $L(V_{\kappa+1})^M$ be a $\kappa$-Solovay model over $V$. Let $\mathbb{P}_1\in V$ be a $\Sigma$-Příkrý poset with $|\mathbb{P}_1|< (\kappa^+)^M$, and $\mathbb{P}_2\in V$ be any forcing poset (not necessarily $\Sigma$-Příkrý, and possibly trivial).\medskip
    
    Let $\pi:\mathbb P_1\to\mathbb P_2$ be a weak projection and let $\varrho:\mathbb B_1\to\mathbb B_2$ denote the image of $\pi$ applied to the pointed poset $\langle\mathbb P_1,\mathbb 1_{\mathbb P_1}\rangle$ under the functor $\Psi$ defined in Lemma \ref{cat}: hence, $\varrho$ is a projection between bottomless complete Boolean algebras and
    \[\mathbb B_1:=\mathrm{RO}(\mathbb P_1);\hspace{.5cm}\mathbb B_2:=\mathrm{RO}(\mathbb P_2)_{\downarrow \pi(\mathbb 1_{\mathbb P_1})}.\]
Let $g$ be $\mathbb B_2$-generic over $V$ with $g\in M$, and let $q\in \mathbb B_1/g$. Then there is a sequence $\langle p_n:n<\omega\rangle\in M$ of elements of $\mathbb P_1$ decreasing in the ordering  $\le_{\mathbb B_1}$, with $p_n\in \mathbb B_1/g$, $p_n\le_{\mathbb B_1} q$ and $\ell(p_n)<\ell(p_{n+1})$, where $\ell$ is the length of $\mathbb P_1$, for all $n<\omega$, and
\[h:=\{b\in\mathbb B_1: \exists n<\omega: p_n\le_{\mathbb B_1} b\}\]
is in $M$ and is $\mathbb B_1/g$-generic over $V$ with $q\in h$.
%
    %


\end{lemma}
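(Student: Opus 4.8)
The plan is to build $\langle p_n:n<\omega\rangle$ by an $\omega$-step recursion carried out inside $M$, so that the filter it generates meets every dense open subset of $\mathbb P_1$ lying in $V$. First I would set $\mathbb Q:=\{p\in\mathbb P_1:\varrho(p)\in g\}$ and observe, using that $g$ is $\mathbb B_2$-generic over $V$, that $\varrho(b)\in g$ holds exactly when some $p\in\mathbb P_1$ with $p\le b$ has $\varrho(p)\in g$; hence $\mathbb Q$ is definable from $\mathbb P_1,\varrho\in V$ and $g\in M$, so $\mathbb Q\in M$, it is a dense subposet of $\mathbb B_1/g$, and its members are precisely the conditions of $\mathbb P_1$ that belong to $\mathbb B_1/g$. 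It then suffices to produce a $\le_{\mathbb B_1}$-decreasing, $\ell$-strictly increasing sequence $\langle p_n\rangle$ of elements of $\mathbb Q$ with $p_0\le_{\mathbb B_1}q$ such that every dense open $D\subseteq\mathbb P_1$ in $V$ contains some $p_n$: then $h:=\{b\in\mathbb B_1:\exists n\,(p_n\le_{\mathbb B_1}b)\}$ is a filter in $M$ with $q\in h$, it is $\mathbb B_1$-generic over $V$ (as $\mathbb P_1$ is dense in $\mathbb B_1$), and $h\subseteq\mathbb B_1/g$, so $\varrho''h$ generates a $\mathbb B_2$-generic filter over $V$ contained in $g$, hence equal to $g$; that is exactly what it means for $h$ to be $\mathbb B_1/g$-generic over $V$.

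Next I would assemble the bookkeeping data from the hypotheses on $M$. By the second clause of Definition~\ref{k-Solovay} applied to $x=\emptyset$, $(\kappa^+)^M$ is inaccessible in $V$, so the family $\mathcal D$ of dense open subsets of $\mathbb P_1$ that lie in $V$ has $V$-cardinality at most $2^{|\mathbb P_1|}<(\kappa^+)^M$; thus in $M$ it may be enumerated as $\langle D_\xi:\xi<\kappa\rangle$. By the first clause $V_\kappa=V_\kappa^M$, so $V$ and $M$ have the same bounded subsets of $\kappa$, and in particular every $\kappa_n$ stays regular and uncountable in $M$; the third clause with $x=\emptyset$ gives $\cof(\kappa)^M=\omega$, and applied to a subset of $\kappa$ coding $g\restriction\varrho''\mathbb P_1$ together with $q$ it provides a $\Sigma$-Příkrý forcing $\mathbb P\in V$ with $|\mathbb P|^M\le\kappa$ and a $\mathbb P$-generic $g'$ over $V$ with $\cof(\kappa)^{V[g']}=\omega$ and $\mathbb Q,q\in V[g']\subseteq M$. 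Finally, for each $\xi$ I replace $D_\xi$ by its strengthening
\[
D_\xi^{\ast}:=\bigl\{s\in\mathbb P_1:\exists\,n>0\ \forall r\le s\,(\ell(r)\ge\ell(s)+n\;\to\;r\in D_\xi)\bigr\}\in V,
\]
which, by the Strong Příkrý Property (Lemma~\ref{strongPříkrýproperty}), is again dense open and is moreover \emph{dense without increasing the length}: below any $p$ there is $\tilde q\le p$ with $\ell(\tilde q)=\ell(p)$ and $\tilde q\in D_\xi^{\ast}$. Since a $\le_{\mathbb B_1}$-decreasing sequence with $\ell\to\infty$ that enters $D_\xi^{\ast}$ must eventually enter $D_\xi$, it is enough to meet all the $D_\xi^{\ast}$.

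The recursion itself runs as follows. Pick $p_0\in\mathbb Q$ with $p_0\le_{\mathbb B_1}q$ and $\ell(p_0)\ge1$. Given $p_m$ with $n_m:=\ell(p_m)$, I want $p_{m+1}\le p_m$ in $\mathbb Q$ with $\ell(p_{m+1})=n_m+1$ that lies in $D_\xi^{\ast}$ for every $\xi$ in a prescribed batch. To schedule the batches I apply the Trace Lemma (Lemma~\ref{trace}) to the surjection $\xi\mapsto D_\xi^{\ast}$ from $\kappa$ onto the $V$-set $\{D_\xi^{\ast}:\xi<\kappa\}$ (whose $V$-size is $<(\kappa^+)^M$): this yields $\langle B_k:k<\omega\rangle\in M$ with each $B_k$ bounded in $\kappa$, $\bigcup_k B_k=\kappa$, and $E_k:=\{D_\xi^{\ast}:\xi\in B_k\}\in V$ of $V$-size $<\kappa$. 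I then schedule $E_k$ to be discharged at the least stage $m\ge k$ with $|E_k|^V<\kappa_{n_m}$ — which exists because $n_m\to\infty$ — so that at each stage $m$ the family $\mathcal E_m$ of sets still to be discharged has size $<\kappa_{n_m}$, by regularity of $\kappa_{n_m}$. At stage $m$ I work inside the part of $\mathbb Q$ consisting of its conditions $\le p_m$ of length $n_m$ and run a sub-recursion of length $|\mathcal E_m|<\kappa_{n_m}$: at successor steps I refine through the next member of $\mathcal E_m$ (each $D^{\ast}\in\mathcal E_m$ being dense in this part of $\mathbb Q$ without raising the length), and at the $<\kappa_{n_m}$-many limit steps I take lower bounds inside $\mathbb Q$, arriving at $\tilde q_m\le p_m$ in $\mathbb Q$ with $\ell(\tilde q_m)=n_m$ and $\tilde q_m\in D^{\ast}$ for all $D^{\ast}\in\mathcal E_m$; I then let $p_{m+1}\le\tilde q_m$ be any element of $\mathbb Q$ of length $n_m+1$. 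Since every $E_k$ is eventually discharged, $\langle p_n\rangle$ enters each $D_\xi^{\ast}$, hence each $D_\xi$, as required.

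The hard part — and the place where the $\Sigma$-Příkrý machinery is genuinely needed — is the last paragraph's use of the \emph{quotient}: I must know that the part of $\mathbb Q$ below $p_m$ of length $n_m$ is $\kappa_{n_m}$-directed-closed, that every $V$-dense-open $D^{\ast}$ stays dense-without-raising-the-length inside $\mathbb Q$, and that a condition of $\mathbb Q$ can be refined within $\mathbb Q$ to the next length. None of these is formal, since $\mathbb Q\notin V$. I would isolate them in an auxiliary claim asserting that, below any condition, $\mathbb B_1/g$ carries a $\Sigma$-Příkrý structure over $V[g]$ with the same length $\ell$: its level-closure is inherited from that of $\mathbb P_1$ by combining the regularity of $\kappa_n$ in $M$ with the Trace Lemma, which breaks any $V[g]$-small directed subset of $\mathbb Q$ into countably many pieces lying in $V$, to which the closure of $\mathbb P_1$ in $V$ applies, the resulting lower bounds being pulled back inside $\mathbb Q$ by a density argument for $g$ that uses that $\varrho$ is a projection; the density features of $\mathbb B_1/g$ over $V[g]$, and hence those of the $V$-definable sets $D$ and $D^{\ast}$ relative to $\mathbb Q$, then follow from the same projection-plus-genericity argument. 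Granting this claim, the recursion above is legitimate and finishes the proof.
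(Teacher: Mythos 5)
There is a genuine gap, and it sits exactly where you flag ``the hard part'': your recursion at stage $m$ is carried out \emph{inside the quotient} $\mathbb Q=\{p\in\mathbb P_1:\varrho(p)\in g\}$, and it needs (a) that each $D_\xi^{\ast}$ is dense in $\mathbb Q$ below $p_m$ \emph{without raising the length}, and (b) that the length-$n_m$ part of $\mathbb Q$ below $p_m$ is $\kappa_{n_m}$-directed-closed. Neither follows from the $\Sigma$-Příkrý structure of $\mathbb P_1$ in $V$ plus genericity of $g$. For (b), your proposed repair (Trace Lemma to split a small directed $L\subseteq\mathbb Q$ into countably many $V$-pieces, take lower bounds in $V$, pull back into $\mathbb Q$ ``by a density argument'') does not go through: the lower bounds of the separate $V$-pieces need not be pairwise compatible, so they cannot be recombined; and even for a single piece $L_k\in V$, the set of its length-$n_m$ lower bounds need not project to a set that is predense below an element of $g$ (the projection property only produces, below a given $c\le\bigwedge\varrho''L_k$, some $p$ with $\varrho(p)\le c$ — it gives no control relating $p$ to $L_k$ in $\mathbb B_1$). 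The same obstruction hits (a): density of the length-preserving refinement into $D_\xi^{\ast}$ holds among length-$n_m$ conditions of $\mathbb P_1$ in $V$, but genericity of $g$ only lets you meet the $\varrho$-image of sets that are dense in $(\mathbb B_1)_{\downarrow q}$ outright, not of sets dense relative to a fixed length. Quotients of level-closed forcings by projections are not in general level-closed, so the auxiliary claim cannot simply be asserted.

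The paper's proof avoids the quotient entirely, and this is the idea your write-up is missing. Using the Trace Lemma exactly as you do, it packages, for each $n$, the requirement ``eventually inside $D_\alpha$ for every $\alpha\in B_n$'' into a single set
\[
R_n:=\bigl\{r\le p_{n-1}:\ \ell(r)>\ell(p_{n-1}),\ \forall\alpha\in B_n\ \exists k_\alpha\ \forall p\le r\,(\ell(p)\ge\ell(r)+k_\alpha\to p\in D_\alpha)\bigr\},
\]
which lies in $V$ because $D\upharpoonright B_n\in V$. The directed-closure-plus-Strong-Příkrý argument proving that $R_n$ is dense in $(\mathbb B_1)_{\downarrow p_{n-1}}$ is then carried out wholly in $V$, on $\mathbb P_1$ itself, where the $\Sigma$-Příkrý axioms are available; one then uses a \emph{single} application of genericity of $g$ against the dense set $\varrho''R_n\subseteq(\mathbb B_2)_{\downarrow\varrho(p_{n-1})}$ to choose $p_n\in R_n$ with $\varrho(p_n)\in g$. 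All the closure bookkeeping happens in $V$; genericity of $g$ is used only to land back in $\mathbb B_1/g$ once per step. I recommend restructuring your argument along these lines; the rest of your proposal (the reduction to meeting all $V$-dense sets, the genericity of the resulting $h$, and the scheduling via the Trace Lemma) is sound.
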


\begin{proof}As $|\mathbb{P}_1|<(\kappa^+)^M$ and $(\kappa^+)^M$ is inaccessible, ${|\wp(\mathbb{P}_1)|}<(\kappa^+)^M$. Consider
\[E:=\{D\in\wp(\mathbb P_1)^V: D\ \text{is dense open in}\ \mathbb P_1\}.\]
This set has size at most $\kappa$ in $M$, hence there is $D:=\langle D_\alpha: \alpha<\kappa\rangle\in M$ that is an enumeration of it. This $D$ is a map from $\kappa$ to $E$, hence we can apply the Trace Lemma \ref{trace} to find a sequence $\langle B_n:n<\omega\rangle\in M$ consisting of bounded subsets of $\kappa$ such that $\bigcup_n B_n=\kappa$ and $(D\upharpoonright B_n)\in V$ for all $n<\omega$. Let us now build a sequence $\langle p_n:n<\omega\rangle$ in $M$ such that $p_{n+1}\le_{\mathbb B_1} p_n\le_{\mathbb B_1} q$ and $\ell(p_{n+1})>\ell(p_n)$ for $n<\omega$.\medskip

We proceed by induction. For $n=0$, let us define
\begin{eqnarray*}R_0:=\left\{r\in\mathbb P_1\cap (\mathbb B_1)_{\downarrow q}: \forall \alpha\in B_0\ \exists k_\alpha<\omega\right.\\\left.\forall p\in(\mathbb P_1)_{\downarrow r} \ (\ell(p)\ge\ell(r)+k_\alpha\to p\in D_\alpha)\right\}.\end{eqnarray*}
Note first that $R_0\in V$ because $(D\upharpoonright B_0)\in V$: this is crucial to run the forthcoming argument. Let us prove that $R_0$ is a dense subset of $(\mathbb B_1)_{\downarrow q}$: to do so, fix $b\in (\mathbb B_1)_{\downarrow q}$. Since $B_0$ is a bounded subset of $\kappa$, there is $m<\omega$ with $B_0\subseteq \kappa_m$. Let $\langle \alpha_\beta:\beta<\gamma\rangle$ be an injective enumeration of $B_0$ and let us pick some $r\le_{\mathbb B_1} b$ with $\ell(r)>m$.\medskip

Now, we want to build a decreasing sequence $\langle u_\beta: \beta<\gamma\rangle$ below $r$ in the ordering $\le_{\mathbb B_1}$ such that for each $\beta<\gamma$ there is a natural number $k_{\alpha_\beta}<\omega$ such that for each $p\le_{\mathbb B_1} u_\beta$ with $\ell(p)\ge\ell(u_\beta)+k_{\alpha_\beta}$ it holds $p\in D_{\alpha_\beta}$. This can be done by utilising the Strong Příkrý Property \ref{strongPříkrýproperty} and the $\kappa_{\ell(r)}$-directed-closedness of the subposet $\langle \ell^{-1}(\{\ell(r)\})\cup\{\mathbb 1_{\mathbb P_1}\}, \le_{\mathbb P_1}, \mathbb 1_{\mathbb P_1}\rangle$, by keeping also $\ell(u_\beta)=\ell(r)$ for all $\beta<\gamma$.\medskip

Finally, if we let $r'\le_{\mathbb B_1} u_\beta$ for all $\beta<\gamma$, again using the $\kappa_{\ell(r)}$-directed-closedness of the aforementioned subposet, we get that $r'\le_{\mathbb B_1} b$ and $r'\in R_0$, proving our claim.\medskip

Now that we know $R_0$ is dense in $(\mathbb B_1)_{\downarrow q}$, and since $\varrho$ is a projection, $\varrho''R_0$ is a dense subset of $(\mathbb B_2)_{\downarrow \varrho(q)}$. We picked $q\in\mathbb B_1/g$, hence $\varrho(q)\in g$ and so $\varrho''R_0\cap g\neq\varnothing$. We now can pick a condition $p_0\in R_0$ such that $\varrho(p_0)\in \varrho''R_0\cap g$, as we wanted.\medskip

Now suppose we already defined $p_m$ for $m\le n$, some $n<\omega$. Define now
\begin{eqnarray*}R_{n+1}:=\left\{r\in(\mathbb P_1)_{\downarrow p_n}: \ell(r)>\ell(p_n), \forall \alpha\in B_{n+1}\ \exists k_\alpha<\omega\right.\\\left.\forall p\in(\mathbb P_1)_{\downarrow r} \ (\ell(p)\ge\ell(r)+k_\alpha\to p\in D_\alpha)\right\}.\end{eqnarray*}
We could carry out the same argument as before to show $R_{n+1}$ is dense in $(\mathbb B_1)_{\downarrow p_n}$. Therefore, we can find a $p_{n+1}\in R_{n+1}$ such that $\varrho(p_{n+1})\in \varrho''R_{n+1}\cap g$.\medskip

Thus, we constructed a sequence $\langle p_n:n<\omega\rangle\in M$, decreasing in the ordering $\le_{\mathbb B_1}$, with $\ell(p_{n+1})>\ell(p_n)$, $p_n\in R_n$, $p_n\le_{\mathbb B_1} q$ and $\varrho(p_n)\in g$ for all $n<\omega$. Let
\[h:=\{b\in\mathbb B_1: \exists n<\omega: p_n\le_{\mathbb B_1} b\}.\]
This $h\in M$. To finish the proof, we shall show that $h$ is $\mathbb B_1/g$-generic over $V[g]$.\medskip

By construction, $p_n\in \mathbb B_1/g$ for all $n<\omega$, so it is enough to show $h$ is $\mathbb B_1$-generic over $V$. Let $D'\in V$ be dense open in $\mathbb B_1$ and consider the set $D^\ast\subseteq\mathbb P_1$ defined as
\[D^\ast:=(D'\cap (\mathbb P_1)_{\downarrow q})\sqcup\{q'\in\mathbb P_1: q\ \text{and}\ q'\ \text{are incompatible}\}.\]
This $D^\ast$ is a dense open subset of $\mathbb P_1$, hence there is some $\alpha<\kappa$ such that $D^\ast=D_\alpha$. Let $n<\omega$ be such that $\alpha\in B_n$. By definition of $R_n$, there is some $k_\alpha<\omega$ such that every $p\le_{\mathbb P_1} p_n$ with $\ell(p)\ge \ell(p_n)+\kappa_\alpha$ belongs to $D_\alpha$. By construction, since the lengths of the elements of the sequence $\langle p_n:n<\omega\rangle$ are always increasing there is some $m<\omega$ such that $\ell(p_m) \ge \ell(p_n)+k_\alpha$, yielding $p_m\in D_\alpha=D^\ast$. Clearly $p_m\le_{\mathbb B_1} q$, meaning that $p_m\in D'$. Also $p_m\in h$, hence witnessing $h\cap D'\neq\varnothing$.
\end{proof}

\begin{lemma}[Capturing Lemma]\label{nocap}
    Assume we are in the hypotheses of the Interpolation Lemma \ref{lemma: interpolation}, and there are also another forcing poset $\mathbb P_3\in V$, a $\mathbb P_3$-generic filter $g_3\in M$, and in $V[g]$ there are an element $t\in \mathbb B_1/g$ with $t\le q$ and a projection 
  \[\sigma: (\mathbb B_1/g)_{\downarrow t} \to \mathbb B_3:=\mathrm{RO}(\mathbb P_3).\]
    Then we can pick $h$ as in the Interpolation Lemma \ref{lemma: interpolation} with $t\in h$ and $\sigma''(h_{\downarrow t})=g_3$.
%
\end{lemma}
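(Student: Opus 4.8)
The plan is to \emph{deduce the Capturing Lemma from the Interpolation Lemma} by enlarging the codomain of the projection so that it absorbs both $\sigma$ and $g_3$ into a single two‑step iteration; the reason to proceed this way is that $\sigma$ and $g_3$ live in $V[g]$, not in $V$, so one cannot comfortably feed them into the construction of $\langle p_n\rangle$ (the dense sets $R_n$ and the Trace Lemma there are all set up over $V$), whereas the corresponding data over $V$ — a ground‑model projection and a ground‑model generic — can be. Concretely: working in $V$, fix a $\mathbb B_2$‑name $\dot\sigma$ for $\sigma$ with $\varrho(t)\Vdash_{\mathbb B_2}$ ``$\dot\sigma$ is a projection from $(\check{\mathbb B}_1/\Gamma)_{\downarrow\check t}$ to $\check{\mathbb B}_3$'' (legitimate since $\sigma\in V[g]$ and $\varrho(t)\in g$), put $\mathbb B_2^\dagger:=\mathbb B_2\ast\check{\mathbb B}_3\in V$, and define
\[\varrho^\dagger\colon(\mathbb B_1)_{\downarrow t}\to(\mathbb B_2^\dagger)_{\downarrow\varrho^\dagger(t)},\qquad \varrho^\dagger(a):=\langle\varrho(a),\dot\sigma(\check a)\rangle,\]
where $\varrho^\dagger(t)=\langle\varrho(t),\check{\mathbb 1}_{\mathbb B_3}\rangle$. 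Since $a\le_{\mathbb B_1}t$ forces $\check a\in(\check{\mathbb B}_1/\Gamma)_{\downarrow\check t}$, the map $\varrho^\dagger$ is well defined and lies in $V$, and by the standard fact that the two‑step composition of a projection with a forced projection is again a projection, $\varrho^\dagger$ is a projection.

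Next set $g\ast g_3:=\{\langle b,\dot b_3\rangle\in\mathbb B_2^\dagger: b\in g\text{ and }i_g(\dot b_3)\in g_3\}$. Using that $g_3$ is $\mathbb B_3$‑generic \emph{over $V[g]$} — the natural reading of the hypothesis, since $t$ and $\sigma$ are objects of $V[g]$ — the filter $g\ast g_3$ is $\mathbb B_2^\dagger$‑generic over $V$, it lies in $M$ because $g,g_3\in M$, and $\varrho^\dagger(t)\in g\ast g_3$, i.e.\ $t\in\mathbb B_1/(g\ast g_3)$. Now run the proof of the Interpolation Lemma \ref{lemma: interpolation} line by line, with $\varrho$ replaced by $\varrho^\dagger$, with $g$ replaced by $g\ast g_3$, and with the distinguished condition $q$ replaced by $t$: this is legitimate because in that proof the poset $\mathbb P_2$ and the weak projection $\pi$ never appear directly — only the Boolean projection $\varrho$ and the algebra $\mathbb B_2$ do — and because every dense set $R_n$ built there sits below the distinguished condition, so only the restriction of the projection below $t$, which is exactly $\varrho^\dagger$, is ever used. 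This produces a $\le_{\mathbb B_1}$‑decreasing sequence $\langle p_n:n<\omega\rangle\in M$ with $\ell(p_n)<\ell(p_{n+1})$, $p_0\le_{\mathbb B_1}t$, and $\varrho^\dagger(p_n)\in g\ast g_3$ for all $n$, together with $h:=\{b\in\mathbb B_1:\exists n<\omega\ (p_n\le_{\mathbb B_1}b)\}\in M$, which the final part of that proof shows to be $\mathbb B_1$‑generic over $V$.

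It remains to read off the conclusions. From $\varrho^\dagger(p_n)\in g\ast g_3$ we get $\varrho(p_n)\in g$ and $\sigma(p_n)\in g_3$; in particular $p_n\in\mathbb B_1/g$ and $h\subseteq\mathbb B_1/g$, so, as $h$ is $\mathbb B_1$‑generic over $V$ and $t\le_{\mathbb B_1}q$ forces $q\in h$, the filter $h$ satisfies every clause of the Interpolation Lemma, and in addition $t\in h$. For the equality: since $t\in h$, the filter $h_{\downarrow t}$ is $(\mathbb B_1)_{\downarrow t}$‑generic over $V$ and contained in $\{b\le_{\mathbb B_1}t:\varrho^\dagger(b)\in g\ast g_3\}$, whence $(\varrho^\dagger)''(h_{\downarrow t})$ is $(\mathbb B_2^\dagger)_{\downarrow\varrho^\dagger(t)}$‑generic over $V$ and contained in the (also $V$‑generic) filter $g\ast g_3\cap(\mathbb B_2^\dagger)_{\downarrow\varrho^\dagger(t)}$; two generic filters on a complete Boolean algebra with one contained in the other coincide, so $(\varrho^\dagger)''(h_{\downarrow t})$ generates $g\ast g_3$, and projecting onto the second coordinate and interpreting by $g$ gives $\sigma''(h_{\downarrow t})=g_3$, as desired. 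The one genuinely delicate point is the observation that $g_3$ must be taken generic over $V[g]$ (not merely over $V$) for $g\ast g_3$ to be a $V$‑generic for the iteration $\mathbb B_2^\dagger$; the verification that $\varrho^\dagger$ is a projection is routine, and everything else is a verbatim transcription of the argument for the Interpolation Lemma.
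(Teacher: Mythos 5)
Your argument is correct, but it takes a genuinely different route from the paper's. The paper proves the Capturing Lemma by re-running the interpolation construction from scratch: it defines dense sets $F_n\subseteq(\mathbb B_1/g)_{\downarrow t}$ (whose density is checked via auxiliary ground-model sets $S_n$ and the projection $\varrho$), and at each stage uses that $\sigma''F_n$ is dense in $\mathbb B_3$ to choose $t_n$ with $\sigma(t_n)\in g_3$ in addition to $\varrho(t_n)\in g$; the conclusion $\sigma''(h_{\downarrow t})=g_3$ then follows by maximality of generic filters, exactly as in your last step. You instead absorb $\sigma$ and $g_3$ into a single two-step iteration $\mathbb B_2\ast\check{\mathbb B}_3$ with the combined projection $\varrho^\dagger$ and the generic $g\ast g_3$, and invoke the Interpolation Lemma as a black box. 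This is cleaner in that it exposes the Capturing Lemma as a formal corollary of the Interpolation Lemma stated for an arbitrary Boolean projection --- your observation that the proof of Lemma \ref{lemma: interpolation} never uses the poset $\mathbb P_2$ or the weak projection $\pi$, only $\varrho$ and $\mathbb B_2$, is correct, and the paper itself exploits this in Claim \ref{52} --- and it avoids duplicating the density argument. The price is the verification that $\varrho^\dagger$ is a projection, which you dismiss as routine but which is really the crux: it amounts to composing the dense embedding $a\mapsto\langle\varrho(a),\check a\rangle$ of $(\mathbb B_1)_{\downarrow t}$ into $\mathbb B_2\ast(\mathbb B_1/\Gamma)_{\downarrow t}$ with the projection $\langle b,\dot a\rangle\mapsto\langle b,\dot\sigma(\dot a)\rangle$, and the projection property requires a maximum-principle/mixing argument to produce a witnessing \emph{name} $\dot a'$ before pulling it back, via density of the embedded copy of $\mathbb B_1$, to an actual element. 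This is a standard fact, but a couple of lines of justification would be appropriate since everything rests on it. Finally, your explicit remark that $g_3$ must be generic over $V[g]$ rather than merely over $V$ is well taken: the paper's own proof needs the same, since the dense sets $\sigma''F_n$ it intersects with $g_3$ live in $V[g]$, so you and the paper are reading the hypothesis identically.
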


\begin{proof}
We mimic the construction of the proof of the Interpolation Lemma \ref{lemma: interpolation}, by also ensuring that the $\mathbb B_1/g$-generic $h$ captures the given $\mathbb P_3$-generic $g_3$. Let
\begin{eqnarray*}F_0:=\left\{u\in\mathbb P_1\cap (\mathbb B_1/g)_{\downarrow t}: \forall \alpha\in B_0\ \exists n_\alpha<\omega\right.\\\left.\forall p\in(\mathbb P_1)_{\downarrow u} \ (\ell(p)\ge\ell(u)+n_\alpha\to p\in D_\alpha)\right\}.\end{eqnarray*}
We want to show that $F_0$ is a dense subset of $(\mathbb B_1/g)_{\downarrow t}$. Let $s\in(\mathbb B_1/g)_{\downarrow t}$ and
\begin{eqnarray*}S_0:=\left\{r\in\mathbb P_1\cap (\mathbb B_1)_{\downarrow s}: \forall \alpha\in B_0\ \exists k_\alpha<\omega\right.\\\left.\forall p\in(\mathbb P_1)_{\downarrow r} \ (\ell(p)\ge\ell(r)+k_\alpha\to p\in D_\alpha)\right\}.\end{eqnarray*}
This set $S_0$ is a dense subset of $(\mathbb B_1)_{\downarrow s}$, because its definition is the same as the one of $R_0$ in the Interpolation Lemma \ref{lemma: interpolation}, except on $(\mathbb B_1)_{\downarrow s}$ instead of $(\mathbb B_1)_{\downarrow q}$. Since the map $\varrho:\mathbb B_1\to\mathbb B_2$ is a projection, $\varrho''S_0$ is dense in $(\mathbb B_2)_{\varrho(s)}$. But $\varrho(s)\in g$, so $g\cap \varrho''S_0\neq\varnothing$. Pick $s'\in S_0$ with $\varrho(s')\in g$. Then $s'\le_{\mathbb B_1} s$ and $s'\in F_0$.\medskip

We proved that $F_0$ is dense in $(\mathbb B_1/g)_{\downarrow t}$. Since $\sigma:(\mathbb B_1/g)_{\downarrow t}\to\mathbb B_3$ is a projection, $\sigma''F_0$ is a dense subset of $\mathbb B_3$, hence $g_3\cap \sigma''F_0\neq\varnothing$. Pick $t_0\in F_0$ with $\sigma(t_0)\in g_3$. It is clear by construction that $t_0\le_{\mathbb B_1} t$ and $\varrho(t_0)\in g$. In general, we define a sequence $\langle t_n:n<\omega\rangle$ in $M$ that is $\le_{\mathbb B_1}$-decreasing and such that $\sigma(t_n)\in g_3$ for all $n<\omega$. This will be done following the previous argument taking as dense sets
\begin{eqnarray*}F_{n+1}:=\left\{u\in\mathbb P_1\cap (\mathbb B_1/g)_{\downarrow t_n}: \forall \alpha\in B_0\ \exists n_\alpha<\omega\right.\\\left.\forall p\in(\mathbb P_1)_{\downarrow u} \ (\ell(p)\ge\ell(u)+n_\alpha\to p\in D_\alpha)\right\}.\end{eqnarray*}
As in the proof of the Interpolation Lemma \ref{lemma: interpolation}, we can show that
\[h:=\{b\in\mathbb B_1: \exists n<\omega : t_n\le_{\mathbb B_1} b\}\]
is $\mathbb B_1/g$-generic over $V[g]$. By construction, $t\in h$ and $\sigma(t_n)\in g_3$ for all $n<\omega$. Since $\sigma$ is a projection and $t\in h$, then $\sigma''(h_{\downarrow t})$ is $\mathbb B_3$-generic. Also, $\sigma''(h_{\downarrow t})\subseteq g_3$ so, by maximality, $\sigma''(h_{\downarrow t})= g_3$. Hence, $h$ has the desired capturing feature.
\end{proof}

\section{Characterising Higher Solovay Models}

In this section we prove the main theorem of this article, the higher analogue of Theorem \ref{BagariaWoodin}.

\begin{theorem}\label{TWgen}
Suppose $V\subseteq M$ are models of \textsf{ZFC} and assume that $L(V_{\kappa+1})^M$ is a $\kappa$-Solovay model over $V$. Let $\mathbb P_\infty$ be a $(\kappa,(\kappa^+)^M)$-nice weak $\Sigma$-system in $V$. Then there is a forcing notion $\mathbb W$ in $M$ (the \emph{constellating poset}) 
such that in the forcing extension by $\mathbb W$ there is a filter $C\subseteq\mathbb P_\infty$ generic over $V$ such that $M$ and $V[C]$ have the same subsets of $\kappa$, hence the same $L(V_{\kappa+1})$. 
\end{theorem}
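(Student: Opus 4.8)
The plan is to mimic the Bagaria--Woodin argument, building a filter $C\subseteq\mathbb P_\infty$ generic over $V$ by a transfinite/recursive construction carried out inside a forcing extension of $M$. First I would work with the Boolean completion $\mathbb B_\infty:=\mathrm{RO}(\mathbb P_\infty)$ and its system of projections $\varrho_{\infty,d}:\mathbb B_\infty\to\mathbb B_d$, together with the complete embeddings $\imath_{d,\infty}$ supplied by Lemma \ref{karagila}; continuity of the weak system (Theorem \ref{fully}) gives that $\mathbb B_\infty=\bigcup_d\imath''_{d,\infty}\mathbb B_d$, which is what lets every piece of data that shows up live at some finite stage. Since each $\mathbb P_d\in H(\lambda)$ with $\lambda=(\kappa^+)^M$, each $\mathbb B_d$ has size $\le\kappa$ in $M$, so it is a legitimate target for the Trace Lemma \ref{trace}, the Interpolation Lemma \ref{lemma: interpolation} and the Capturing Lemma \ref{nocap}. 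The constellating poset $\mathbb W$ should be (essentially) a quotient/limit of the $\mathbb B_\infty/g$-type quotients, set up so that a $\mathbb W$-generic over $M$ assembles a coherent thread through the diagram; the natural candidate is to take $\mathbb W$ to be $\mathbb B_\infty/g$ for a suitable $g\in M$, or an inverse-limit poset of such quotients along a cofinal chain of objects $d$.

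The heart of the construction is to enumerate, in $M$, all the dense open subsets of $\mathbb P_\infty$ lying in $V$ — there are at most $|\wp(\mathbb P_\infty)|^M$ of them, but the relevant point is that each individual dense set is determined by bounded data once we localise to some $\mathbb P_d$ — and to recursively pick a decreasing thread meeting each of them, using at each step the Interpolation Lemma to descend inside a quotient and the Capturing Lemma to guarantee that the induced projections to the already-chosen finite pieces $\pi''_{\infty,d}C$ hit the right generics. Concretely: given the partial thread deciding the diagram up to object $d$, one uses $({<}\lambda)$-capturing (Theorem \ref{fully}) to see that any new dense set of $\mathbb P_\infty$ is ``reflected'' into some $\mathbb P_e$ with $e\to d$, applies the Trace Lemma to slice the (size-$\le\kappa$) family of dense sets into countably many $V$-pieces, and then runs the Interpolation/Capturing machinery of §5 exactly as in those lemmas to extend the thread while preserving genericity over $V$ of all the projections $\pi''_{\infty,e}C$. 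Coherence of the $\pi_{d,e}$ ensures the pieces fit together into a single $C$; $\kappa$-capturing of $\mathbb P_\infty$ then forces every $x\subseteq\kappa$ in $V[C]$ to appear at a finite stage, which is how one eventually argues $\wp(\kappa)^{V[C]}=\wp(\kappa)^M$ and hence $L(V_{\kappa+1})^{V[C]}=L(V_{\kappa+1})^M$.

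For the equality of the $\kappa$-powersets I would argue both inclusions separately. The inclusion $\wp(\kappa)^{V[C]}\subseteq\wp(\kappa)^M$ is the easy direction: $C\in M^{\mathbb W}$, and one should choose $\mathbb W$ (or rather verify) so that forcing with $\mathbb W$ over $M$ adds no new subsets of $\kappa$ — this is where $\Sigma$-Příkrý-style closure and the fact that $\kappa$-Solovay-ness makes everything below $(\kappa^+)^M$ small-generic over $V$ get used, reducing to the no-new-bounded-subsets property (Lemma \ref{bddsbs}) together with a cofinality-$\omega$ trace argument (Lemma \ref{OGtrace}/\ref{trace}). The reverse inclusion $\wp(\kappa)^M\subseteq\wp(\kappa)^{V[C]}$ is exactly what the Capturing Lemma is engineered to deliver: for a given $x\subseteq\kappa$ in $M$, Definition \ref{k-Solovay} provides a $\Sigma$-Příkrý $\mathbb P_x\in V$ with a $V$-generic $g_x\in M$ capturing $x$, and during the recursion we arrange (via Lemma \ref{nocap}, taking $\mathbb P_3=\mathbb P_x$, $g_3=g_x$) that $C$ captures $g_x$, so $x\in V[C]$. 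I expect the main obstacle to be the bookkeeping: one must capture $\kappa^+$-many subsets of $\kappa$ of $M$ simultaneously while only having countably-many-at-a-time control from the Trace Lemma, so the recursion has to be organised along a chain of objects of $\mathsf D$ of length $(\kappa^+)^M$ (using directedness of $\mathsf D$ and $\lambda$-boundedness to keep each stage of size $\le\kappa$), and one has to check that the limit stages glue correctly and that genericity over $V$ of $C$ — not merely of each finite projection — is preserved. Handling the limit-stage coherence, and making precise the single poset $\mathbb W$ whose generic codes the whole thread, is the delicate part; everything else is a careful repackaging of the §5 lemmas.
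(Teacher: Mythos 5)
You have assembled the right toolkit (Interpolation, Capturing and Trace Lemmas, continuity, $\kappa$-capturing, and Definition \ref{k-Solovay}(iii) to produce, for each $x\in\wp(\kappa)^M$, a small $V$-generic $g_x\in M$ to be captured), and the two inclusions you isolate are exactly the ones to prove. But the central object is missing: you never actually define $\mathbb W$, and you flag precisely the definition of ``the single poset whose generic codes the whole thread'' as the unresolved delicate part. Your two candidates would not work. A single quotient $\mathbb B_\infty/g$ forced over $M$ gives no control over the images $\varrho_{\infty,e}''C$ at all the other objects $e$ of $\mathsf D$, and an inverse limit along a cofinal chain presupposes that $\mathsf D$ has a cofinal chain, which a downwards directed posetal category need not have and which the argument should not require. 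The idea you are missing is to let $\mathbb W$ be the poset of triples $\langle p,d,g\rangle$ with $d$ an object of $\mathsf D$, $g\in M$ a $V$-generic filter on $\mathbb B_d$, and $p\in\mathbb B_\infty/g$, ordered by $\langle q,e,h\rangle\le\langle p,d,g\rangle$ iff $q\le p$ and either $(e,h)=(d,g)$ or $e\to d$ and $h$ is $\mathbb B_e/g$-generic over $V[g]$. With this $\mathbb W$, all of the transfinite recursion, bookkeeping along a length-$(\kappa^+)^M$ chain, and limit-stage coherence that you worry about simply disappear: one proves that various sets of conditions are dense (nonemptiness of cones via the Interpolation Lemma; the sets $D_x$ via the Capturing Lemma applied to $g_x$; the sets $E_e$ of conditions whose index refines $e$ via the Interpolation Lemma again) and lets a single $\mathbb W$-generic $H$ over $M$ do all the gluing. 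The filter $C$ is then read off as the set of $p\in\mathbb B_\infty$ below which some first coordinate of a condition in $H$ lies.

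There is also a genuine error in your argument for $\wp(\kappa)^{V[C]}\subseteq\wp(\kappa)^M$. You propose to show that $\mathbb W$ adds no new subsets of $\kappa$ over $M$, reducing to Lemma \ref{bddsbs} and a trace argument. The constellating poset has no such closure property and nothing of the sort is needed: the theorem asserts $\wp(\kappa)^{V[C]}=\wp(\kappa)^M$, not $\wp(\kappa)^{M[H]}=\wp(\kappa)^M$. The correct mechanism, which you half-state in your second paragraph but then abandon, is this: by $\kappa$-capturing, any $x\in\wp(\kappa)^{V[C]}$ lies in $V[\varrho_{\infty,e}''C]$ for some object $e$; by density of $E_e$ and a compatibility argument inside the filter $H$, the projection $\varrho_{\infty,e'}''C$ for a suitable $e'\to e$ coincides with the third coordinate $g$ of some condition in $H$, and those coordinates are, by the very definition of $\mathbb W$, elements of $M$. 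Hence $x\in V[g]\subseteq M$. The step you would need to supply --- and which your proposal does not contain --- is the verification that $\varrho_{\infty,d}''C$ equals the generic $g$ appearing in conditions of $H$ with index $d$, which again is a short density-and-compatibility computation in $\mathbb W$ rather than anything about closure of $\mathbb W$.
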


\begin{proof}Let us denote by $\mathbb P_d$, $\pi_{\infty,d}$ and $\pi_{e,d}$ and  the same sets and maps as in the discussion after Definition \ref{def: nice systems}, for $d$ object in a downwards directed posetal category $\mathsf D$ and $e$ such that $e\to d$. Let also $\mathbb B_\infty$, $\mathbb B_d$, $\varrho_{\infty,d}$ and $\varrho_{e,d}$ be the images of the corresponding objects and morphisms under the functor $\Psi$, where we are considering the posets $\mathbb P_d$ pointed with $p_d=\pi_{\infty,d}(\mathbb 1_{\mathbb P_\infty})$, as we did after Definition \ref{miau}.\medskip

Working in $M$, we define $\mathbb W$ as the collection of all triples $\langle p,d,g\rangle\in M$ such that
\begin{itemize}
    \item $d$ is an object of $\mathsf D$;
    \item $g\subseteq\mathbb B_d$ is a generic filter over $V$;
    \item $p\in \mathbb B_\infty/g$.
\end{itemize}
The order between the conditions is $\langle q,e,h\rangle\le \langle p,d,g\rangle$ if and only if $q\le p$, $d=e$ and $g=h$; or if $q\le p$, $e\to d$ with $e\neq d$ and $h\subseteq\mathbb B_e/g$ is a generic filter over $V[g]$. Let us start by showing that $\mathbb W\neq\varnothing$, and something more, in the following Claim \ref{52}:

\begin{claim}\label{52}Given $d$ an object of $\mathsf D$ and $p\in\mathbb B_\infty$, there is $g\in M$ with $\langle p,d,g\rangle\in \mathbb W$.\end{claim}


\begin{proof}
Apply the Interpolation Lemma \ref{lemma: interpolation} to the models $V\subseteq M$, posets $\mathbb P_1:=\mathbb P_d$ and $\mathbb P_2:=\{\varrho_{\infty,d}(p)\}$, the weak projection $\pi:\mathbb{P}_{d}\rightarrow\{\varrho_{\infty,d}(p)\}$ sending everything to $\varrho_{\infty,d}(p)$, the trivial $\mathbb B_2$-generic $\{\varrho_{\infty,d}(p)\}$ and the condition $\varrho_{\infty,d}(p)\in \mathbb B_d$.\medskip

 Our Interpolation Lemma \ref{lemma: interpolation} finds us a generic $g\subseteq\mathbb B_d/\{\varrho_d(p)\}=\mathbb B_d$ in $M$ with $\varrho_d(p)\in g$. By definition of the constellating poset, it holds $\langle p,d,g\rangle\in\mathbb W$.
\end{proof}



Now let $H$ be a $\mathbb W$-generic filter over $M$. Then we define, in $M[H]$,
\[C:=\{p\in \mathbb B_\infty: \langle q,d,g\rangle\in H\ \text{for some}\ q\le p, d, g\}.\]

\begin{claim}
    $C$ is a $\mathbb B_\infty$-generic filter over $V$.
\end{claim}

\begin{proof}First of all, $C\neq\varnothing$ because $H\neq\varnothing$ and if $\langle q,d,g\rangle\in H$ then $q\in C$. Then, if $p$ and $p'$ are in $C$, it holds $\langle q,d,g\rangle\in H$ and $\langle q',d',g'\rangle\in H$ for $q\le p$ and $q'\le p'$, so since $H$ is a filter we can find some $\langle q'', d'',g''\rangle$ in $H$ that is below both of the conditions, and $q''\le p$, $q''\le p'$ with $q''\in C$. Finally, if $p\in C$ and $p\le p'$, the same $\langle q,d,g\rangle$ witnessing $p\in C$ holds for $p'$, leading to $p'\in C$. This shows $C$ is a filter.\medskip

Now let us prove $\mathbb B_\infty$-genericity. Pick $D\subseteq\mathbb B_\infty$ dense open and in $V$, and let
\[D':=\{\langle q,e,h\rangle\in \mathbb W: q\in D\}.\]
We shall show $D'$ is dense in $\mathbb W$. Indeed, let $\langle q,e,h\rangle\in \mathbb W$. Then $q\in \mathbb B_\infty$ hence, by density of $D$ and by knowing that $q\in \mathbb B_\infty/h$, there is $q'\le q$ with $q'\in D\cap \mathbb B_\infty/h$. It follows that $\langle q',e,h\rangle\le \langle q,e,h\rangle$ and $\langle q',e,h\rangle\in D'$, hence $D'$ is dense in $\mathbb W$.\medskip

Since $H$ is $\mathbb W$-generic over $M$, we have $D'\cap H\neq\varnothing$. Now consider any element $\langle q,d,g\rangle\in D'\cap H$, then it holds that $q\in D\cap C$, hence $C$ is $\mathbb B_\infty$-generic over $V$.
%
%
%
\end{proof}

To finish the proof of Theorem \ref{TWgen}, we only need to prove that $M$ and $V[C]$ have the same subsets of $\kappa$, that is, $\wp(\kappa)^M=\wp(\kappa)^{V[C]}$. Indeed, in the statement of the theorem a filter $C$ generic for the poset $\mathbb P_\infty$ is required, but $\mathbb P_\infty$ and $\mathbb B_\infty=\mathrm{RO}(\mathbb P_\infty)$ are forcing equivalent, hence our filter $C$ that is $\mathbb B_\infty$-generic over $V$ is enough.


\begin{claim}\label{54}
    For all $x\in\wp(\kappa)^M$, the set \[D_x:=\{\langle q,d,g\rangle\in\mathbb W: x\in V[g]\}\] is dense.
\end{claim}

\begin{proof}
    Fix $x\in\wp(\kappa)^M$ and let $\langle p,d,h\rangle$ be a condition in $\mathbb W$. We want to show that there is $\langle q,e,h'\rangle\in\mathbb{W}$ with $\langle p,d,h\rangle\ge\langle q,e,h'\rangle$ and crucially $x\in V[h']$.\medskip
    
    By hypothesis, there is a $\Sigma$-Příkrý forcing notion $\mathbb Q$ in $V$, which has cardinality $|\mathbb Q|\le \kappa$ in $M$, and some $\mathbb Q$-generic filter $g\in M$ over $V$ such that $x\in V[g]$.\medskip
    
    Let $G$ be a filter $\mathbb{B}_\infty/h$-generic over $V[h]$ with $p\in G$. Notice that this $G$ need not to be in $M$. We claim there is a $\mathbb{Q}$-generic $g'\in V[h][G]=V[G]$, and to show it we are going to invoke the $\Sigma$-Příkrý nature of $\mathbb{Q}$ and its ``interpolation feature".\medskip

    More concretely, notice that $L(V_{\kappa+1})^{V[G]}$ is $\kappa$-Solovay over $V$, hence we can apply the Interpolation Lemma \ref{lemma: interpolation} to the models $V\subseteq V[G]$, the posets $\mathbb P_1:=\mathbb Q$ and $\mathbb P_2:=\{\mathbb 1_\mathbb Q\}$, the trivial weak projection $\pi:\mathbb Q\to\{\mathbb 1_\mathbb Q\}$ that sends everything to $\mathbb 1_\mathbb Q$, and the condition $\mathbb 1_\mathbb Q$. Then our Interpolation Lemma \ref{lemma: interpolation} yields a generic filter $g'_1\subseteq \mathbb R/\{\mathbb 1_\mathbb Q\}=\mathbb R$ in $V[G]$, where $\mathbb R:=\mathrm{RO}(\mathbb Q)$. Let $g':=g_1'\cap\mathbb Q$.\medskip

    Thus, $g'\in V[G]$ is $\mathbb Q$-generic over $V$. Now we exploit the $\kappa$-capturing property of $\mathcal P$, to find an object $e$ of $\mathsf D$ such that $e\to d$ and $g'\in V[h][\varrho_{\infty,e}''G]$. Hence,
    \[V[h][\varrho_{\infty,e}''G]\models ``g'\text{ is }\mathbb{Q}\text{-generic over}\ V".\]
    Now, recall that a forcing name $\tau$ is \emph{open} whenever $\langle \sigma,q\rangle\in \tau$ implies $\langle \sigma,r\rangle\in\tau$ for all $r\le q$. For every name $\sigma$ there is always an open name $\tau$ such that $\mathbb 1\Vdash (\tau=\sigma)$. Hence, fix $t\in \varrho_{\infty,e}''G\subseteq\mathbb B_e/h$ and let us pick an open $\mathbb B_e/h$-name $\gamma$ such that
    \[V[h]\models ``t \Vdash_{\mathbb B_e/h} ``\gamma\ \text{is}\ \mathbb Q\text{-generic over}\ V"".\]
Since $\gamma$ is open, we may assume that it is actually a $(\mathbb B_e/h)_{\downarrow t}$-name, for otherwise
\[\gamma':=\{\langle \sigma, v\rangle: v\le t,\ \langle \sigma, v\rangle\in \gamma\}\]
is a $(\mathbb B_e/h)_{\downarrow t}$-name with $t\Vdash_{\mathbb B_e/h} (\gamma=\gamma')$, thanks to the openness of $\gamma$.\medskip


Since $t\in\varrho_{\infty,e}''G$ and $p\in H$, we may also assume $t\le \varrho_{\infty,e}(p)$. For otherwise, say $\varrho_{\infty,e}(s)\le t$ for some $s\in H$. Then we can find some $s'\le s$ and $s'\le p$ with $s'\in H$. Replace $t$ with $t':=\varrho_{\infty,e}(s')\le\varrho_{\infty,e}(p)$, and we get the desired conclusion.\medskip

Working for a moment inside $V[h]$, let us consider the complete embedding \[\imath:\mathbb{R}\rightarrow(\mathbb{B}_{e}/h)_{\downarrow t}\] induced by $\gamma$, namely, the map defined by the formula
$$\imath(r):=\bigvee\{s\in(\mathbb{B}_{e}/h)_{\downarrow t}: s\Vdash_{(\mathbb{B}_{e}/h)_{\downarrow t}}\check{r}\in\gamma\}\big.$$ Appealing to Lemma \ref{karagila} we deduce (inside $V[h]$) the existence of a projection $$\sigma\colon(\mathbb{B}_{e}/h)_{\downarrow t}\rightarrow\mathbb R.$$ 
By the Capturing Lemma \ref{nocap}, applied to $V\subseteq M$, the projection $\sigma$ and the generic $g\in M$, we deduce there is a generic $h'\subseteq\mathbb B_e/h$ in $M$ with $t\in h'$ and $\sigma''(h'_{\downarrow t})=g$. Since $\sigma\in V[h]$ and $h'\in V[h][h']$, we get that $g\in V[h][h']=V[h']$. So we conclude that $\langle p,e,h'\rangle\le \langle p,d,h\rangle$ by definition, and $x\in V[g]\subseteq V[h']$, hence $D_x$ is dense.
\end{proof}

Now we are able to prove that $\wp(\kappa)^M\subseteq \wp(\kappa)^{V[C]}$. Let $x\in\wp(\kappa)^M$. Since $D_x$ is dense and $H$ is generic, $D_x\cap H\neq\varnothing$, hence we can pick $\langle q,d,g\rangle\in D_x\cap H$.\medskip

Fix $p\in C$. Then there is $\langle p',e,h\rangle\in H$ where $p'\le p$, $e$ is an object of $\mathsf D$ and $g$ is $\mathbb B_e$-generic. Since $H$ is a filter, there is a common extension $\langle p'',e',h'\rangle$ of $\langle p',e,h\rangle$ and $\langle q,d,g\rangle$. We know $p''\in \mathbb B_\infty/h'$, hence $\varrho_{\infty,e'}(p'')\in h'\subseteq \mathbb B_{e'}/g$. But now, we know
\[\varrho_{\infty,d}(p'')=\varrho_{e',d}(\varrho_{\infty,e'}(p''))\in g,\]
and from $\varrho_{\infty,d}(p'')\in g$ we get that $\varrho_{\infty,d}(p)\in g$. But $p$ was picked as any element of $C$, hence we proved $\varrho_{\infty,d}''C\subseteq g$ and since they are both generic $\varrho_{\infty,d}''C= g$.\medskip

But now $x\in V[g]=V[\varrho_{\infty,d}''C]\subseteq V[C]$, hence $\wp(\kappa)^M\subseteq \wp(\kappa)^{V[C]}$, as desired.



\begin{claim}
For all objects $e$ of $\mathsf D$, the set \[E_e:=\{\langle p,e',h\rangle\in\mathbb{W}: e'\to e \}\] is dense.
\end{claim}

\begin{proof}
Let $\langle q_0,e_0,h_0\rangle\in\mathbb W$. Since the posetal category $\mathsf D$ is downwards directed, there is an object $e'$ such that $e'\to e_0$ and $e'\to e$. We want to find $h$ such that $\langle q_0,e',h\rangle\le \langle q_0,e_0,h_0\rangle$. The main obstacle is to find a $\mathbb B_{e'}$-generic $h$ over $V$ which is also $\mathbb B_{e'}/h_0$ over $V[h_0]$.\medskip

Here the Interpolation Lemma \ref{lemma: interpolation} comes to our rescue. Since $\mathbb P_\infty$ is a $(\kappa,(\kappa^+)^M)$-nice weak $\Sigma$-system, the hypotheses are fulfilled when regarded with respect to $V\subseteq M$, the posets $\mathbb P_1:=\mathbb P_{e'}$ and $\mathbb P_2:=\mathbb P_{e_0}$, the weak projection $\pi_{e',e_0}$ and the condition $\varrho_{\infty,e'}(q_0)\in\mathbb B_{e'}/h_0$. The Interpolation Lemma \ref{lemma: interpolation} thus gives us a $\mathbb B_{e'}/h_0$-generic $h\in M$ with $\varrho_{\infty,e'}(q_0)\in h$. Hence $\langle q_0,e',h\rangle\in\mathbb W$ is below $\langle q_0,e_0,h_0\rangle$.
%
\end{proof}

To finish the proof, we just need $\wp(\kappa)^{V[C]}\subseteq \wp(\kappa)^M$. Let $x\in \wp(\kappa)^{V[C]}$. Thanks to the $\kappa$-capturing property, we know that there is an object $e$ of $\mathsf D$ such that $x\in V[\varrho_{\infty,e}''C]$. Now we know that $E_e$ is dense and $H$ is generic, so we can pick $\langle q,e',g\rangle\in H\cap E_e$.\medskip

As we did previously, we can prove that $\varrho_{\infty,e'}''C=g$, and we know $g\in M$, so
\[x\in V[\varrho_{\infty,e}''C]\subseteq V[\varrho_{\infty,e'}''C]=V[g]\subseteq M.\]

We just need to prove that if $x\in\wp(\kappa)^{V[C]}$ then $x\in M$. Thanks to the $\kappa$-capturing property of $(\kappa,(\kappa^+)^M)$-nice weak systems, we know there is an object $e$ in $\mathsf D$ such that $x\in{V[\varrho_e''C]}$. 
If now we pick any $\langle q,e',g\rangle \in H\cap E_e$, we know that $\varrho_{e'}''C=g$ as before, and $g\in M$, so $x\in V[\varrho_{e}''C]\subseteq V[\varrho_{e'}''C]=V[g]\subseteq M$, hence $x\in \wp(\kappa)^M$.
\end{proof}

Thanks to this result, we have a complete characterisation of $\kappa$-Solovay models:

\begin{corollary}\label{66}
$L(V_{\kappa+1})^M$ is a $\kappa$-Solovay model if and only if it is the $L(V_{\kappa+1}$) of a generic extension of $V$ via a $(\kappa,\lambda)$-nice weak $\Sigma$-system for some $\lambda>\kappa$ inaccessible.\end{corollary}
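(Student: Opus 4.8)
The plan is to obtain the corollary by combining Theorem~\ref{k-welldef} (for the ``$\Leftarrow$'' direction) with Theorem~\ref{TWgen} (for ``$\Rightarrow$''), after recording one preliminary observation: just as in the classical setting ``Solovay model'' is really a property of $L(\mathbb R)^M$ rather than of $M$, being a $\kappa$-Solovay model over $V$ is a property of the inner model $L(V_{\kappa+1})^M$ alone. Indeed $V_\kappa$, $\wp(\kappa)$ and the ordinal $(\kappa^+)$ are computed identically in $M$, in $L(V_{\kappa+1})^M$, and in any $V[C]$ with $L(V_{\kappa+1})^{V[C]}=L(V_{\kappa+1})^M$: the first two because $V_\kappa,\wp(\kappa)\subseteq V_{\kappa+1}$ and $L(V_{\kappa+1})$ is an inner model, and the third by the usual argument (the one showing $\omega_1^{L(\mathbb R)}=\omega_1$), since any well-ordering of $\kappa$ whose order type lies in $[\kappa,(\kappa^+)^M)$ is coded, via the absolute Gödel pairing bijection $\kappa\times\kappa\to\kappa$, by a subset of $\kappa$, and hence belongs to whichever of these models contains $\wp(\kappa)$.

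For ``$\Leftarrow$'', suppose $L(V_{\kappa+1})^M=L(V_{\kappa+1})^{V[C]}$ where $C\subseteq\mathbb P_\infty$ is $V$-generic for a continuous $(\kappa,\lambda)$-nice weak $\Sigma$-system $\mathbb P_\infty\in V$ with $\lambda$ inaccessible. By Theorem~\ref{k-welldef}, $V[C]$ satisfies the three clauses of Definition~\ref{k-Solovay}, and I will transfer each to $M$. By the preliminary observation $\wp(\kappa)^M=\wp(\kappa)^{V[C]}$ and $(\kappa^+)^M=(\kappa^+)^{V[C]}=\lambda$, so clauses (1) and (2) hold for $M$ verbatim. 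For clause (3), given $\alpha<\lambda$ and $x\subseteq\alpha$ in $M$, I fix in $M$ a bijection of $\alpha$ with $\kappa$ — which, being coded by a subset of $\kappa$, lies also in $V[C]$ — and transport $x$ to a subset of $\kappa$, placing $x\in\wp(\kappa)^{V[C]}$, hence $x\in V[C]$; clause (3) for $V[C]$ then supplies a $\Sigma$-Příkrý $\mathbb P\in V$ and a $g$ generic over $V$ with $V[x]\subseteq V[g]$ and $\cof(\kappa)^{V[g]}=\omega$. Finally, since $\mathbb P$ has $V$-cardinality below $\lambda=(\kappa^+)^M$ and $M$ has no cardinals strictly between $\kappa$ and $\lambda$, the bound $|\mathbb P|^{V[C]}\le\kappa$ upgrades to $|\mathbb P|^M\le\kappa$. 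Thus $L(V_{\kappa+1})^M$ is $\kappa$-Solovay over $V$.

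For ``$\Rightarrow$'', suppose $L(V_{\kappa+1})^M$ is $\kappa$-Solovay over $V$, and set $\lambda:=(\kappa^+)^M$; by clause (2) of Definition~\ref{k-Solovay} with $x=\varnothing$, $\lambda$ is inaccessible in $V$. Using the ambient large-cardinal hypotheses — which, via the examples of Section §4 together with Lemmata~\ref{merifully} and~\ref{diagomerifully}, furnish a continuous $(\kappa,\lambda)$-nice weak $\Sigma$-system $\mathbb P_\infty$ in $V$ — I apply Theorem~\ref{TWgen}: there is a poset $\mathbb W\in M$ whose generic extension of $M$ contains a filter $C\subseteq\mathbb P_\infty$ generic over $V$ with $\wp(\kappa)^M=\wp(\kappa)^{V[C]}$. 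To finish I upgrade this to $V_{\kappa+1}^M=V_{\kappa+1}^{V[C]}$: because $\kappa$ is a strong limit, $V$ contains a bijection $V_\kappa\to\kappa$, which lies in both $M$ and $V[C]$ (both of which compute $V_\kappa$ correctly, by clause (1) and by Lemma~\ref{bddsbs}), and transporting subsets of $V_\kappa$ along it yields $\wp(V_\kappa)^M=\wp(V_\kappa)^{V[C]}$, hence $V_{\kappa+1}^M=V_{\kappa+1}^{V[C]}$ and $L(V_{\kappa+1})^M=L(V_{\kappa+1})^{V[C]}$. Since $V[C]$ is a generic extension of $V$ via the continuous $(\kappa,\lambda)$-nice weak $\Sigma$-system $\mathbb P_\infty$ with $\lambda>\kappa$ inaccessible, this is precisely the required form.

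The only delicate points are in ``$\Leftarrow$'': the three clauses of Definition~\ref{k-Solovay} are literally assertions about the ambient model, so one must check they survive the passage between $M$ and $V[C]$, the crux being the coincidence of $(\kappa^+)$ (via absoluteness of Gödel pairing) and of the size bound $|\mathbb P|\le\kappa$ (via the fact that a $(\kappa,\lambda)$-nice weak $\Sigma$-system preserves $\lambda$ while collapsing the whole interval $(\kappa,\lambda)$ onto $\kappa$). In ``$\Rightarrow$'' the sole input beyond Theorem~\ref{TWgen} is the existence in $V$ of a continuous $(\kappa,(\kappa^+)^M)$-nice weak $\Sigma$-system, which is provided by the standing hypotheses.
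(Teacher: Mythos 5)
Your proposal is correct and follows essentially the same route as the paper, whose entire proof is ``one implication is Theorem~\ref{k-welldef}, the other one is Theorem~\ref{TWgen}''; the extra work you do --- checking that the clauses of Definition~\ref{k-Solovay} transfer between $M$ and $V[C]$ when only $L(V_{\kappa+1})^M=L(V_{\kappa+1})^{V[C]}$ is given, and upgrading $\wp(\kappa)^M=\wp(\kappa)^{V[C]}$ to equality of the $L(V_{\kappa+1})$'s --- is detail the paper leaves implicit rather than a different argument. You are also right to flag that the forward direction needs the existence in $V$ of a continuous $(\kappa,(\kappa^+)^M)$-nice weak $\Sigma$-system as an input to Theorem~\ref{TWgen}; this hypothesis is likewise tacit in the paper's statement of the corollary, so it is a shared caveat rather than a gap you introduced.
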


\begin{proof}
One implication is Theorem \ref{k-welldef}, the other one is Theorem \ref{TWgen}.
\end{proof}

While the Lévy collapse $\Coll(\omega, {<}\lambda)$ is uniquely determined by $\lambda$, there could be many different $(\kappa,\lambda)$-nice weak $\Sigma$-systems even with the same $\kappa$ and $\lambda$. Hence, a priori, this could have been a difference between classical Solovay models and our $\kappa$-Solovay models. But thanks to our main theorem, this is not the case:

\begin{corollary}\label{56}
Let $\mathbb P_\infty$ and $\mathbb Q_\infty$ be two $(\kappa,\lambda)$-nice weak $\Sigma$-systems. Then for all $G\subseteq\mathbb P_\infty$ that are $V$-generic there is $H\subseteq\mathbb Q_\infty$, also $V$-generic, such that \[L(V_{\kappa+1})^{V[G]}=L(V_{\kappa+1})^{V[H]}.\]
\end{corollary}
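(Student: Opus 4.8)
The strategy is a routine application of the characterisation in Corollary \ref{66}. Fix a $V$-generic $G\subseteq\mathbb P_\infty$. By Theorem \ref{k-welldef}, the model $M:=V[G]$ witnesses that $L(V_{\kappa+1})^{V[G]}$ is a $\kappa$-Solovay model over $V$. Now apply Theorem \ref{TWgen} to $V\subseteq M$ and to the continuous $(\kappa,\lambda)$-nice weak $\Sigma$-system $\mathbb Q_\infty$, noting that $\lambda=(\kappa^+)^M$ since $\mathbb P_\infty$ is amenable to interpolations (hence $(\kappa^+)^{V[G]}=\lambda$). Theorem \ref{TWgen} then produces a forcing $\mathbb W\in M$ such that in $M[H]$, for $H$ any $\mathbb W$-generic over $M$, there is a filter $C\subseteq\mathbb Q_\infty$ generic over $V$ with $\wp(\kappa)^M=\wp(\kappa)^{V[C]}$, and consequently $L(V_{\kappa+1})^M=L(V_{\kappa+1})^{V[C]}$.

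**Absorbing the auxiliary forcing.** The only subtlety is that $C$ lives in the further extension $M[H]=V[G][H]$, not in $V$ itself, whereas the statement asks for an $H\subseteq\mathbb Q_\infty$ that is merely $V$-generic (there is no requirement that $V[H]\subseteq V[G]$ or vice versa). So I would first rename: let me call the $\mathbb W$-generic object $\tilde H$ and the resulting $\mathbb Q_\infty$-generic filter $H:=C\in V[G][\tilde H]$. This $H$ is $\mathbb Q_\infty$-generic over $V$ — which is exactly what the corollary demands — and we have
\[
L(V_{\kappa+1})^{V[G]}=L(V_{\kappa+1})^{V[H]},
\]
because $\wp(\kappa)^{V[G]}=\wp(\kappa)^{V[H]}$ and $V_\kappa^{V[G]}=V_\kappa=V_\kappa^{V[H]}$ (the latter by the first clause of Definition \ref{k-Solovay}, or directly by Lemma \ref{bddsbs} applied to each side), so that $(V_{\kappa+1})^{V[G]}=(V_{\kappa+1})^{V[H]}$ and hence the two inner models $L(V_{\kappa+1})$ coincide set-by-set.

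**The main obstacle.** There is essentially no obstacle: the content has all been pushed into Theorem \ref{TWgen} and Theorem \ref{k-welldef}, and the present corollary is a two-line deduction once one checks the bookkeeping that $\lambda=(\kappa^+)^{V[G]}$ so that the hypothesis ``$(\kappa,(\kappa^+)^M)$-nice'' of Theorem \ref{TWgen} is met by $\mathbb Q_\infty$. If one wanted $H$ to be $V$-generic \emph{in $V[G]$} rather than in a further forcing extension, that would be a genuinely different and harder statement; but the corollary as stated does not ask for this, so the argument above suffices.

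\begin{proof}
Fix $G\subseteq\mathbb P_\infty$ generic over $V$ and put $M:=V[G]$. By Theorem \ref{k-welldef}, $L(V_{\kappa+1})^M$ is a $\kappa$-Solovay model over $V$. Since $\mathbb P_\infty$ is amenable to interpolations, $(\kappa^+)^M=\lambda$, so $\mathbb Q_\infty$ is a continuous $(\kappa,(\kappa^+)^M)$-nice weak $\Sigma$-system in $V$. Applying Theorem \ref{TWgen} to $V\subseteq M$ and $\mathbb Q_\infty$, we obtain the constellating poset $\mathbb W\in M$ such that, for $\tilde H$ generic over $M$ for $\mathbb W$, there is in $M[\tilde H]$ a filter $H\subseteq\mathbb Q_\infty$ generic over $V$ with $\wp(\kappa)^M=\wp(\kappa)^{V[H]}$.

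It remains only to observe that this gives the same $L(V_{\kappa+1})$. Since forcing with a $\Sigma$-Příkrý poset adds no bounded subsets of $\kappa$ (Lemma \ref{bddsbs}), and more simply since the first clause of Definition \ref{k-Solovay} gives $V_\kappa^M=V_\kappa$ and likewise $V_\kappa^{V[H]}=V_\kappa$, we have $(V_{\kappa+1})^M=(V_{\kappa+1})^{V[H]}$, because a subset of $V_\kappa$ belongs to $V_{\kappa+1}$ of a model precisely when it is an element of that model, and $\wp(\kappa)^M=\wp(\kappa)^{V[H]}$ together with $V_\kappa^M=V_\kappa^{V[H]}$ yields equality of the powersets of $V_\kappa$ in the two models. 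Therefore $L(V_{\kappa+1})^{V[G]}=L((V_{\kappa+1})^M)=L((V_{\kappa+1})^{V[H]})=L(V_{\kappa+1})^{V[H]}$, as required.
\end{proof}
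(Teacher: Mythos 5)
Your proposal is correct and follows essentially the same route as the paper: apply Theorem \ref{TWgen} with $M:=V[G]$ (which is $\kappa$-Solovay over $V$ by Theorem \ref{k-welldef}) to the system $\mathbb Q_\infty$, obtaining the constellating poset whose generic extension yields the desired $V$-generic $H\subseteq\mathbb Q_\infty$ with $\wp(\kappa)^{V[G]}=\wp(\kappa)^{V[H]}$. Your extra bookkeeping --- checking $\lambda=(\kappa^+)^{V[G]}$ via amenability to interpolations, and your explicit remark that $H$ lives only in a further extension of $V[G]$ (which matches how the paper itself reads the statement) --- is a welcome clarification but not a departure from the paper's argument.
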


\begin{proof}
    Consider the constellating poset $\mathbb W$ of $\mathbb Q_\infty$ in $V[G]$. Thanks to Theorem \ref{TWgen} we know that in any extension by this forcing there is a filter $H\subseteq\mathbb Q_\infty$ that is $V$-generic such that $\wp(\kappa)^{V[G]}=\wp(\kappa)^{V[H]}$, hence also $L(V_{\kappa+1})^{V[G]}=L(V_{\kappa+1})^{V[H]}$, as desired.
\end{proof}
\section{Regularity Properties}

In the classical case, it is provable that if $L(\mathbb R)^M$ is a Solovay model over $V$, then every subset of the reals in $L(\mathbb R)^M$ is Lebesgue measurable, has the Baire and the perfect set properties, and satisfies many other nice regular behaviours.\medskip

We now establish an analogous result for the $\kappa$-Solovay model. To this end, we first introduce the higher analogues of the perfect set property and the Baire property in this new setting. In contrast, recent work by Agostini, Barrera and Dimonte (in \cite{ABD}) might show that a version ofLebesgue measurability for $\kappa$-reals might be unfeasible.\medskip

Classical Descriptive Set Theory (of which a great textbook is \cite{Kec}) studies the properties of definable subsets of Polish spaces, with the real line \(\mathbb{R}\), the Baire space $\omega^\omega$ and the Cantor space $2^\omega$ serving as paradigmatic examples. A number of results in the field show that simply definable sets, such as Borel or analytic sets, exhibit a rich and well-structured canonical theory.\medskip

The emerging field of Generalised Descriptive Set Theory, although the term ``generalised'' is somewhat misleading and might be better replaced with ``higher'', arises from the study of definable objects beyond the continuum. Specifically, this line of research employs descriptive set-theoretic tools to study definable sets in higher function spaces like $2^\kappa$ and $\kappa^\kappa$, as in \cite{Friedman}. It turns out that when $\kappa$ is an infinite strong limit cardinal with $\cof(\kappa)=\omega$, there are a number of results that parallel the findings of classical descriptive set theory, and that get lost in the uncountable regular case.\medskip

For the rest of the section, we assume that $\kappa$ is a cardinal with $\cof(\kappa)=\omega$ and $\beth_\kappa=\kappa$ (recall that $\beth_\kappa=\left|V_\kappa\right|$). Indeed, this implies that
\[L(V_{\kappa+1})=L(\wp(\kappa)),\]
and also if $M$ is a model of \textsf{ZFC} it holds that
\[L(V_{\kappa+1})^M\models \mathsf{ZF}+\mathsf{DC}_\kappa.\]
For a proof of $\mathsf{DC}_\kappa$ in $L(V_{\kappa+1})$ see \cite[Lemma 4.10]{DimonteRankIntoRank}.\medskip

\begin{definition}\label{polakko} A topological space is called \emph{$\kappa$-Polish} in case it is homeomorphic to a completely metrisable space with weight $\kappa$.\end{definition}

For more details on the definition of $\kappa$-Polish spaces and on the following examples, we refer to \cite{DimonteMotto}.\medskip

The first example of $\kappa$-Polish space is the higher Baire space $\kappa^\omega$, equipped with the bounded topology: the basic open neighbourhoods are the ones of the form
\[\mathcal N_s:=\{x\in\kappa^\omega: (x\upharpoonright \ell(s))=s\},\]
for $s\in \kappa^{<\omega}$. Another example, that is homeomorphic to $\kappa^\omega$, is the higher Cantor space $2^\kappa$ with the bounded topology.\medskip

A less straightforward example of $\kappa$-Polish space is the following. Let $\Sigma=\langle \kappa_n:n<\omega\rangle$ be an increasing sequence of regular cardinals with $\kappa=\sup_{n}\kappa_n$. The space
$\textstyle \prod_{n<\omega}\kappa_n$ is a closed subspace of $\kappa^\omega$, hence it is $\kappa$-Polish.\medskip


Next, we want to introduce the higher versions of the perfect set and the Baire properties. Recall that a subset of $\mathbb R$ has the perfect set property if it is either countable or contains a perfect subset, that is, a closed set without isolated points.

\begin{definition}
A subset $P$ of a $\kappa$-Polish space is \emph{$\kappa$-perfect} if it is closed and for all $x\in P$ and all open neighbourhoods $U$ of $x$ it holds $|P\cap U|\ge \kappa$. A subset $A$ of a $\kappa$-Polish space has the \emph{$\kappa$-perfect set property} if either $|A|\le \kappa$ or $A$ contains a $\kappa$-perfect subset.
\end{definition}


Since continuous maps are uniquely determined by their behaviour on any dense set, and every $\kappa$-Polish space has weight $\kappa$,  all continuous maps between $\kappa$-Polish spaces can be coded by elements of $V_{\kappa+1}$ or of $\wp(\kappa)$. In particular, this means that the $\kappa$-perfect set property is absolute between models that share the same $\wp(\kappa)$. Specifically, a set $A\in L(V_{\kappa+1})$ which is a subset of a $\kappa$-Polish space has the $\kappa$-perfect set property in $V$ if and only if it has the $\kappa$-perfect set property in $L(V_{\kappa+1})$.\medskip

In \cite{DPT}, the authors establish the following result:

\begin{theorem}\label{psp}
 Assume $\kappa$ is a $({<}\lambda)$-supercompact cardinal where $\lambda>\kappa$ is inaccessible. Then, in any extension by the Merimovich forcing, every subset of $\kappa^\omega$ in $L(V_{\kappa+1})$ has the $\kappa$-perfect set property.
\end{theorem}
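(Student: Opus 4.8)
The plan is to deduce Theorem \ref{psp} as an immediate consequence of the machinery already built: first observe by the Corollary following Theorem \ref{k-welldef} that $L(V_{\kappa+1})^{V[G]}$ is a $\kappa$-Solovay model over $V$ whenever $G\subseteq\mathbb P_\infty$ is $V$-generic for the Merimovich forcing, and then invoke a general theorem asserting that every $\kappa$-Solovay model satisfies the $\kappa$-perfect set property for subsets of $\kappa^\omega$. Since the section explicitly says that the regularity properties are being transferred to the abstract setting, the right way to organise this is to prove the abstract statement once and read off Theorem \ref{psp} (as well as the Baire-property analogue) as corollaries. So the first step is to state and prove: \emph{if $L(V_{\kappa+1})^M$ is a $\kappa$-Solovay model over $V$, then every $A\subseteq\kappa^\omega$ in $L(V_{\kappa+1})^M$ has the $\kappa$-perfect set property}.

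For that abstract statement I would run the classical Solovay argument in the generalised setting. Fix $A\subseteq\kappa^\omega$ in $L(V_{\kappa+1})^M$; since $L(V_{\kappa+1})=L(\wp(\kappa))$, the set $A$ is definable in $L(V_{\kappa+1})^M$ from a parameter $a\in\wp(\kappa)^M$ together with ordinals, so there is a formula $\varphi$ with $A=\{x\in\kappa^\omega: L(V_{\kappa+1})^M\models\varphi(x,a)\}$. By the third clause of Definition \ref{k-Solovay} there is a $\Sigma$-Příkrý forcing $\mathbb P\in V$ with $|\mathbb P|^M\le\kappa$ and a $V$-generic $g$ with $V[a]\subseteq V[g]$ and $\cof(\kappa)^{V[g]}=\omega$; by the second clause $(\kappa^+)^M$ is inaccessible in $V[a]$. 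The key point is that $\kappa^\omega\cap V[g]$ already contains a $\kappa$-perfect set of mutually generic $\kappa$-reals for the appropriate further forcing, so that one can use the standard dichotomy: working in $V[g]$, either one can find a single condition forcing ``$\dot x\notin A$'' below which the relevant tree of generic extensions stays outside $A$, in which case (by homogeneity of $\Sigma$-Příkrý forcings and Lemma \ref{bddsbs}, which guarantees $V_\kappa$ is not changed) one shows $|A|\le\kappa$ because every element of $A$ is added by a small forcing over $V[a]$ and there are only $\le\kappa$ many such; or no such condition exists, in which case one builds a perfect tree of conditions whose branches give a continuous injection of $2^\kappa$ into $A$, yielding a $\kappa$-perfect subset. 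The absoluteness remark just before Theorem \ref{psp} lets us pass freely between ``$\kappa$-perfect set property in $V[G]$'' and ``in $L(V_{\kappa+1})$''.

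Then Theorem \ref{psp} itself is a two-line corollary: under the hypothesis that $\kappa$ is $({<}\lambda)$-supercompact with $\lambda>\kappa$ inaccessible, the Merimovich forcing $\mathbb P_\infty$ of Definition \ref{merimovich} is a continuous $(\kappa,\lambda)$-nice weak $\Sigma$-system (by the lemma preceding \ref{merifully} together with \ref{merifully}), so by Theorem \ref{k-welldef} any $V[G]$ with $G\subseteq\mathbb P_\infty$ generic has $L(V_{\kappa+1})^{V[G]}$ a $\kappa$-Solovay model over $V$, and the abstract theorem applies.

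\textbf{Main obstacle.} The delicate part is the ``small set'' side of the dichotomy: one must show that if $A$ has no $\kappa$-perfect subset then every $x\in A$ lies in $V[a][h]$ for some generic $h$ for a forcing of size $\le\kappa$, and then count that there are at most $\kappa$ many such $\kappa$-reals. In the classical case this counting uses that $\Coll(\omega,{<}\omega_1)$ is homogeneous and that names for reals can be coded by reals; here the analogue requires care because the $\Sigma$-Příkrý forcings involved are not homogeneous in the literal sense, so one must use instead the factorisation/capturing structure — essentially the Interpolation Lemma \ref{lemma: interpolation} and the Capturing Lemma \ref{nocap} — to reduce an arbitrary $x\in A$ to a generic over a fixed small model, and the Trace Lemma \ref{trace} to keep everything coded inside $\wp(\kappa)$. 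Getting this reduction to interact correctly with the definability of $A$ over $L(V_{\kappa+1})^M$ (rather than over $M$) is where the real work lies; the perfect-tree side is a routine fusion argument using the $\kappa_n$-directed-closure of the length-$n$ pieces of a $\Sigma$-Příkrý poset.
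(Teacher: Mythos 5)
There is a genuine gap here, and it is primarily an architectural one. In the paper, Theorem \ref{psp} is not proved at all: it is imported verbatim from \cite[Corollary 4.12]{DPT}, where it is established by a direct combinatorial analysis of the Merimovich extender-based forcing. The abstract statement you propose to prove first and then invoke --- ``every $\kappa$-Solovay model satisfies the $\kappa$-perfect set property'' --- is precisely Corollary \ref{pspsp} of the paper, and that corollary is \emph{deduced from} Theorem \ref{psp} together with the characterisation Theorem \ref{TWgen} (and only under the additional hypothesis that $\kappa$ is $({<}(\kappa^+)^M)$-supercompact, which is needed so that the Merimovich forcing exists in $V$). Your plan therefore reverses the paper's logical order: to make it non-circular you would have to supply an independent, complete proof of the abstract statement, and that is exactly the content of the DPT result you are trying to derive.

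The sketch you give of that independent proof does not close the gap. The ``small side'' counting leans on homogeneity, which you correctly note fails for $\Sigma$-Příkrý forcings, but the proposed replacement (Interpolation, Capturing and Trace Lemmas) is not actually deployed --- these lemmas transfer generics between models, they do not by themselves convert ``$x\in A$'' into a forcing statement over $V[a][x]$ in the absence of homogeneity. More seriously, the ``perfect-tree side'' is not a routine fusion argument: producing a $\kappa$-perfect set of $\kappa$-reals inside $A$ is where \cite{DPT} uses the specific extender-based structure of the Merimovich conditions (the sets $A\in E(\dom(f))$ and the $\ll$-ordering on objects), not merely the $\kappa_n$-directed-closedness of the length-$n$ pieces of an abstract $\Sigma$-Příkrý poset. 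Since you explicitly defer ``the real work'' at both of these points, the proposal as written establishes neither the abstract statement nor Theorem \ref{psp}. The correct reading of the paper is that Theorem \ref{psp} is an external input, and the new contribution of Section 7 is the transfer of its conclusion to arbitrary $\kappa$-Solovay models via Theorem \ref{TWgen}.
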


\begin{proof}
    The proof of this theorem can be found in \cite[Corollary 4.12]{DPT}.
\end{proof}

This theorem enables us to prove the following corollary.

\begin{corollary}\label{pspsp}
    Let $V\subseteq M$ be models of \textsf{ZFC} such that $L(V_{\kappa+1})^M$ is $\kappa$-Solovay over $V$, and assume that $\kappa$ is $({<}(\kappa^+)^M)$-supercompact in $V$. Then every subset of $\kappa^\omega$ that belongs to $L(V_{\kappa+1})^M$ has the $\kappa$-perfect set property, both in $M$ and in $L(V_{\kappa+1})^M$.
\end{corollary}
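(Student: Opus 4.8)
The plan is to reduce the abstract situation to the concrete Merimovich extension, where Theorem~\ref{psp} applies, and then transfer the conclusion back. First I would invoke Corollary~\ref{56} combined with Theorem~\ref{TWgen}: since $\kappa$ is $({<}(\kappa^+)^M)$-supercompact in $V$, the Merimovich forcing $\mathbb P_{\mathrm{Mer}}$ relative to $\kappa<(\kappa^+)^M$ is a continuous $(\kappa,(\kappa^+)^M)$-nice weak $\Sigma$-system in $V$ (by Definition~\ref{merimovich} and Lemma~\ref{merifully}). Applying Theorem~\ref{TWgen} with $\mathbb P_\infty:=\mathbb P_{\mathrm{Mer}}$, there is a forcing $\mathbb W$ in $M$ such that in the $\mathbb W$-extension $M[H]$ there is a filter $C\subseteq\mathbb P_{\mathrm{Mer}}$ generic over $V$ with $\wp(\kappa)^M=\wp(\kappa)^{V[C]}$, hence $L(V_{\kappa+1})^M=L(V_{\kappa+1})^{V[C]}$.

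Next I would apply Theorem~\ref{psp} to $V[C]$: every subset of $\kappa^\omega$ in $L(V_{\kappa+1})^{V[C]}$ has the $\kappa$-perfect set property. Since $L(V_{\kappa+1})^{V[C]}=L(V_{\kappa+1})^M$, this gives us the conclusion \emph{as computed in $V[C]$ (equivalently, in $M[H]$)} for every set $A\subseteq\kappa^\omega$ lying in $L(V_{\kappa+1})^M$. The remaining work is an absoluteness argument: the $\kappa$-perfect set property of a fixed set $A\in L(V_{\kappa+1})^M$ must be shown to hold already in $M$ (and in $L(V_{\kappa+1})^M$), not merely after forcing with $\mathbb W$. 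Here I would use the absoluteness remark made just before Theorem~\ref{psp}: since $\cof(\kappa)=\omega$ and $\beth_\kappa=\kappa$, a $\kappa$-perfect subset is coded by an element of $\wp(\kappa)$, and $\wp(\kappa)^M=\wp(\kappa)^{V[C]}=\wp(\kappa)^{M[H]}$. So if $P\in M[H]$ witnesses that $A$ contains a $\kappa$-perfect subset, then the code of $P$ is in $\wp(\kappa)^M$, hence $P\in M$ and indeed $P\in L(V_{\kappa+1})^M=L(\wp(\kappa))^M$; being $\kappa$-perfect is itself absolute between $M$, $M[H]$ and $L(V_{\kappa+1})^M$ since it is a statement about $P$ and the (fixed, coded) topology. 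Symmetrically, the alternative $|A|\le\kappa$ is a statement about a surjection $\kappa\to A$, and such a surjection, being essentially a subset of $\kappa$ once $A$ is coded, also lives in $\wp(\kappa)^M$. Thus whichever disjunct of the $\kappa$-perfect set property holds in $M[H]$ for $A$ already holds in $M$, and then in $L(V_{\kappa+1})^M$ by the same coding absoluteness.

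The main obstacle I anticipate is the bookkeeping in this last absoluteness step: one must be careful that ``$|A|\le\kappa$'' and ``$A$ contains a $\kappa$-perfect subset'' are both $\Sigma$-type over $\wp(\kappa)$ in a way that is genuinely absolute between $M$, the generic extension $M[H]$, and the inner model $L(V_{\kappa+1})^M$, using that $M$ and $M[H]$ agree on $\wp(\kappa)$ and that $L(V_{\kappa+1})^M=L(\wp(\kappa))^M$ contains every subset of $\kappa$ of $M$. No fundamentally new idea is needed beyond what is already invoked in the paragraph preceding Theorem~\ref{psp}; the content of the corollary is really the machinery of Theorem~\ref{TWgen} plus Corollary~\ref{56}, with Theorem~\ref{psp} as the input.
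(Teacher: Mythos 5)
Your proposal is correct and follows essentially the same route as the paper: Lemma \ref{merifully} gives that the Merimovich forcing is a continuous $(\kappa,(\kappa^+)^M)$-nice weak $\Sigma$-system, Theorem \ref{TWgen} identifies $L(V_{\kappa+1})^M$ with $L(V_{\kappa+1})^{V[C]}$ for a Merimovich-generic $C$, and Theorem \ref{psp} then yields the conclusion. The only difference is that you spell out the absoluteness of the $\kappa$-perfect set property between models sharing $\wp(\kappa)$, which the paper leaves implicit via the remark preceding Theorem \ref{psp}.
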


\begin{proof}
We know that the Merimovich forcing built from $\kappa$ and $(\kappa^+)^M$ is a $(\kappa,(\kappa^+)^M)$-nice weak $\Sigma$-system in $V$. Hence by Theorem \ref{TWgen} we find $L(V_{\kappa+1})^M=L(V_{\kappa+1})^{V[G]}$ where $G$ is $V$-generic for the Merimovich forcing, and we apply the previous Theorem \ref{psp} to find the desired conclusion.
\end{proof}

If we denote by $\kappa\text{-}\mathsf{PSP}$ the statement ``every subset of $\kappa^\omega$ has the $\kappa$-perfect set property'', as a consequence of Corollary \ref{pspsp} we get that

\begin{corollary}
    Let $V\subseteq M$ be models of \textsf{ZFC} and let $\kappa$ be a $({<}(\kappa^+)^M)$-supercompact cardinal in $V$ with $\beth_\kappa=\kappa$. Then if $L(V_{\kappa+1})^M$ is a $\kappa$-Solovay model over $V$, it holds
    \[L(V_{\kappa+1})^M\models \mathsf{ZF}+\mathsf{DC}_\kappa+\kappa\text{-}\mathsf{PSP}.\]
\end{corollary}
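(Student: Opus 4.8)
The plan is simply to assemble the three conjuncts from facts already in place, so this is a packaging argument rather than a new proof. The hypothesis $\beth_\kappa=\kappa$, together with $\cof(\kappa)=\omega$ — which is automatic in our setting, since $\kappa$ is the supremum of the fixed $\Sigma$-sequence $\langle\kappa_n:n<\omega\rangle$ of regular cardinals — yields, as noted just above Definition \ref{polakko}, that $L(V_{\kappa+1})^M=L(\wp(\kappa))^M$. In particular $L(V_{\kappa+1})^M$ is of the form $L(A)$ for a set $A$, hence it is a transitive model of $\mathsf{ZF}$, and $\mathsf{DC}_\kappa$ holds in it by \cite[Lemma 4.10]{DimonteRankIntoRank}. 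This disposes of the first two conjuncts.

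For the remaining conjunct, the plan is to quote Corollary \ref{pspsp} directly. Its hypotheses are exactly the ones we are given: $V\subseteq M$ are models of $\mathsf{ZFC}$, $L(V_{\kappa+1})^M$ is $\kappa$-Solovay over $V$, and $\kappa$ is $({<}(\kappa^+)^M)$-supercompact in $V$. Its conclusion says that every subset of $\kappa^\omega$ belonging to $L(V_{\kappa+1})^M$ has the $\kappa$-perfect set property in $L(V_{\kappa+1})^M$, which is precisely the assertion $\kappa\text{-}\mathsf{PSP}$ interpreted inside $L(V_{\kappa+1})^M$. Combining this with the previous paragraph gives $L(V_{\kappa+1})^M\models\mathsf{ZF}+\mathsf{DC}_\kappa+\kappa\text{-}\mathsf{PSP}$.

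The only point deserving a line of justification is that ``having the $\kappa$-perfect set property'' denotes the same relation whether computed in $M$ or inside $L(V_{\kappa+1})^M$; this is the absoluteness remark recorded just before Theorem \ref{psp}, resting on the fact that continuous maps between $\kappa$-Polish spaces are coded by elements of $\wp(\kappa)$ and that $\wp(\kappa)^{L(V_{\kappa+1})^M}=\wp(\kappa)^M$. Accordingly I do not expect a genuine obstacle: the care required is merely bookkeeping — checking that $\beth_\kappa=\kappa$ is invoked in the two places where it is actually needed (to identify $L(V_{\kappa+1})$ with $L(\wp(\kappa))$, and thence to apply the $\mathsf{DC}_\kappa$ lemma), while the supercompactness hypothesis enters only through Corollary \ref{pspsp}.
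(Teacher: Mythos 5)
Your proposal is correct and matches the paper's (implicit) argument exactly: the paper presents this corollary as an immediate consequence of Corollary \ref{pspsp} together with the facts recorded at the start of Section 7 that $\beth_\kappa=\kappa$ and $\cof(\kappa)=\omega$ give $L(V_{\kappa+1})=L(\wp(\kappa))\models\mathsf{ZF}+\mathsf{DC}_\kappa$. Your extra sentence on the absoluteness of the $\kappa$-perfect set property between $M$ and $L(V_{\kappa+1})^M$ is exactly the bookkeeping the paper leaves to the reader.
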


Next we turn to the higher version of the Baire property. Recall that a subset of a topological space is said to be \emph{rare} if its complement contains an open dense set; it is \emph{meager} if it is a countable union of rare sets; and it has the \emph{Baire property} if it can be written as the symmetric difference of an open and a meager set.\medskip

In a similar fashion, one can define a \emph{$\kappa$-meager} set as an union of $\kappa$-many rare sets; and say that a set has the \emph{$\kappa$-Baire property} if it can be written as the symmetric difference of an open and a $\kappa$-meager set. A problem arises, because in the classical case $\mathbb R$ is not meager, while all three examples we considered after Definition \ref{polakko} are $\kappa$-meager, and this is trivially useless for our purposes.\medskip

To bypass this issue, another route is outlined in \cite{DimonteMottoShi}. Namely, the authors consider the space $\textstyle \prod_{n<\omega}\kappa_n$ with a topology different than the one mentioned previously. This will be the \emph{$\mathcal U$-Ellentuck-Příkrý} topology, but in order to define it we need particular assumptions: indeed, we require each $\kappa_n$ to be a measurable cardinal and we let $\mathcal U:=\langle \mathcal U_n:n<\omega\rangle$ be a sequence of normal non-principal measures, where each $\mathcal U_n$ is a measure over $\kappa_n$.\medskip

The easiest way to define this topology is to introduce the diagonal version of Příkrý forcing. This is a kind of $\Sigma$-Příkrý forcing, once again demonstrating the strong connection between this family of forcing notions and higher descriptive set theory.

\begin{definition}
    A condition in the diagonal Příkrý forcing $\mathbb P_\mathcal U$ is a sequence
    \[p = \langle \alpha^p_0,\dots, \alpha^p_{\ell(p)-1}, A^p_{\ell(p)}, A^p_{\ell(p)+1},\dots \rangle\]
    where
    \begin{itemize}
        \item $s^p=\langle \alpha^p_0,\dots, \alpha^p_{\ell(p)-1}\rangle\in \prod_{n<\ell(p)}\kappa_n$ is strictly increasing;
        \item $A^p_n$ belongs to $\mathcal{U}_n$ for $n\ge \ell(p)$;
        \item every $\beta\in A_{n+1}^p$ is bigger than $\kappa_n$. 
\end{itemize}
Given two conditions $p$ and $q$ in $\mathbb P_\mathcal U$, we write $p\leq q$ if
\begin{itemize}
    \item $s^p\restriction\ell(q) = s^q$;
    \item $s^p(n)\in A^q_n$ for all $\ell(q)\le n<\ell(p)$;
    \item $A^p_n\subseteq A^q_n$ for all $n\geq \ell(p)$.
    \end{itemize}
\end{definition}

Note that, when considering the diagonal Merimovich forcing from Definition \ref{meridiag}, this diagonal Příkrý forcing is equal to $\mathbb P_d$, where $d$ the terminal object in the category $\mathsf D$, namely the sequence $d=\langle \{\kappa_n\}: n<\omega\rangle$.

\begin{definition}
    For each condition $p\in\mathbb P_\mathcal U$, define
    \[\mathcal N_p:=\{x\in 
\textstyle \prod_{n<\omega}\kappa_n: (x\upharpoonright \ell(p))=s^p \land \forall n\ge \ell(p), x(n)\in A^p_n\}.\]
The \emph{$\mathcal U$-Ellentuck-Příkrý topology} is the topology on $\textstyle \prod_{n<\omega}\kappa_n$ having $\mathcal N_p$ for $p\in\mathbb P_\mathcal U$ as the basic open sets. A subset of $\textstyle \prod_{n<\omega}\kappa_n$ has the \emph{$\mathcal U$-Baire property} if it has the $\kappa$-Baire property in the $\mathcal U$-Ellentuck-Příkrý topology.
\end{definition}

As for the $\kappa$-perfect set property, having the $\mathcal U$-Baire property for a subset of $\textstyle \prod_{n<\omega}\kappa_n$ is absolute between models with the same $\wp(\kappa)$, and the proof is routine. We would like to thank Vincenzo Dimonte for bringing this fact to our attention.\medskip

Dimonte, Poveda and the third author in \cite{DPT} show the following result:

\begin{theorem}\label{Ubai}
Let $\kappa<\lambda$ be such that $\kappa=\sup_n \kappa_n$ is the supremum of a sequence of cardinals such that each $\kappa_n$ is $({<}\lambda)$-supercompact, and $\lambda$ is inaccessible. Then, in any extension by the diagonal Merimovich forcing, every subset of $\kappa^\omega$ in $L(V_{\kappa+1})$ has the $\kappa$-perfect set property, and also every subset of $\textstyle \prod_{n<\omega}\kappa_n$ in $L(V_{\kappa+1})$ has the $\mathcal U$-Baire property.
\end{theorem}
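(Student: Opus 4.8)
The plan is to run, inside the extension $V[G]$ by the diagonal Merimovich forcing $\mathbb P_\infty$ of Definition~\ref{meridiag}, a Solovay-style argument for each of the two assertions, closely following the classical proofs of the analogous statements for the ordinary Solovay model. I would begin with the standard reduction: any set $A$ in $L(V_{\kappa+1})^{V[G]}$ which is a subset of a $\kappa$-Polish space is, after coding continuous maps by elements of $V_{\kappa+1}$, definable over $L(V_{\kappa+1})^{V[G]}$ from a single parameter $z\in\wp(\kappa)^{V[G]}$ together with ordinals. By Lemma~\ref{diagomerifully} the poset $\mathbb P_\infty$ is a continuous $(\kappa,\lambda)$-nice weak $\Sigma$-system, so by $\kappa$-capturing (indeed by $({<}\lambda)$-capturing, Theorem~\ref{fully}) there is an object $e$ of $\mathsf D$ with $z\in V[\pi''_{\infty,e}G]$; the definition of $A$ is then absolute to the intermediate model $V[\pi''_{\infty,e}G]$, over which $V[G]$ is the generic extension by the quotient of $\mathbb B_\infty$ by $\pi''_{\infty,e}G$.

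For the $\kappa$-perfect set property I would argue as in the proof of Theorem~\ref{psp}, transposed to the diagonal poset and to singular $\kappa$. One checks that this quotient forcing is, below $e$, Příkrý-like and sufficiently homogeneous that the Boolean value $b$ of ``the canonical generic point lies in $A$'', computed over $V[\pi''_{\infty,e}G]$, is either $0$ or $1$. If $b=1$, a fusion construction using the Strong Příkrý Property~\ref{strongPříkrýproperty} together with the $\kappa_n$-directed-closedness of the length-$n$ layers produces a $\kappa$-perfect tree whose branches are mutually generic over $V[\pi''_{\infty,e}G]$, and the body of that tree is a $\kappa$-perfect subset of $A$. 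If $b=0$, a counting argument — the parameter lies in the intermediate model and, by Lemma~\ref{bddsbs}, the $\Sigma$-Příkrý quotient adds no bounded subsets of $\kappa$ — yields $|A|\le\kappa$. Either way $A$ has the $\kappa$-perfect set property.

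For the $\mathcal U$-Baire property I would exploit the observation recorded just after the definition of $\mathbb P_\mathcal U$: the $\mathcal U$-Ellentuck-Příkrý topology on $\textstyle\prod_{n<\omega}\kappa_n$ has as basic open sets exactly the Příkrý neighbourhoods $\mathcal N_p$, $p\in\mathbb P_\mathcal U$, and $\mathbb P_\mathcal U$ is literally $\mathbb P_d$ for $d$ the terminal object $\langle\{\kappa_n\}:n<\omega\rangle$ of $\mathsf D$. I would therefore choose the captured object $e$ above so that moreover $e\to d$, which is possible since $\mathsf D$ is downwards directed; then $\pi''_{\infty,e}G$ yields a $\mathbb P_\mathcal U$-generic sequence over $V[\pi''_{\infty,e}G]$, and the normal measures $\mathcal U_n$ are unchanged in every model in sight because no subset of $\kappa_n$ is ever added. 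A Banach-Mazur / unfolding argument then shows that, below each $p\in\mathbb P_\mathcal U$, the set $A\cap\mathcal N_p$ is $\kappa$-comeager or $\kappa$-meager in $\mathcal N_p$ according as $p$ does or does not force the generic point into $A$; gluing over a maximal antichain the conditions $p$ of the first kind yields an open set $O$ such that $A\triangle O$ is $\kappa$-meager, so $A$ has the $\kappa$-Baire property in the $\mathcal U$-Ellentuck-Příkrý topology.

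The step I expect to be the main obstacle — and the one where the $({<}\lambda)$-supercompactness of each $\kappa_n$ and the inaccessibility clauses built into the definition of $\mathbb P_\infty$ are genuinely used — is the structural analysis of the quotient forcing: one must verify that the quotient of $\mathbb B_\infty$ by $\pi''_{\infty,e}G$, recomputed over the intermediate model, retains a workable Strong Příkrý Property, directed-closed layers and enough homogeneity for the dichotomy, and, for the Baire half, that $\mathbb P_\mathcal U$ and hence the whole $\mathcal U$-Ellentuck-Příkrý topology are genuinely the same whether computed in $V$, in $V[\pi''_{\infty,e}G]$, or in $V[G]$. These facts rest on the no-new-bounded-subsets property (Lemma~\ref{bddsbs}) and on the closure of the length-$n$ layers; granted them, everything else is the routine Solovay bookkeeping, carried out in full in \cite{DPT}.
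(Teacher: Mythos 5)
The paper does not actually prove this theorem: it is quoted from \cite[Corollary 4.36]{DPT}, and the ``proof'' in the text is only that citation. So the comparison is between your sketch and the argument carried out in \cite{DPT}. Your outline follows the expected Solovay-style template --- capture the defining parameter $z$ in an intermediate extension $V[\pi''_{\infty,e}G]$ using Lemma \ref{diagomerifully} and Theorem \ref{fully}, analyse the quotient over that model, run a fusion argument for the $\kappa$-perfect set property and a Banach--Mazur-type argument for the $\mathcal U$-Baire property --- and that reduction step is sound. But as written this is a plan rather than a proof: every genuinely hard step (homogeneity of the quotient, the fusion producing a $\kappa$-perfect tree of mutually generic branches, the comeager/meager dichotomy below each $p\in\mathbb P_{\mathcal U}$) is asserted, and you explicitly defer them to \cite{DPT}, which is the very source the theorem is being quoted from.

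One asserted step is not merely incomplete but incorrect as stated. In the $b=0$ branch of your perfect-set dichotomy you claim $|A|\le\kappa$ follows because, by Lemma \ref{bddsbs}, the $\Sigma$-Příkrý quotient ``adds no bounded subsets of $\kappa$''. Elements of $\kappa^\omega$ are not bounded subsets of $\kappa$: since $\cof(\kappa)=\omega$ in the extension, the forcing certainly adds new, cofinal elements of $\kappa^\omega$ --- the generic Příkrý sequence itself is one --- so Lemma \ref{bddsbs} gives no control over $A\subseteq\kappa^\omega$. Relatedly, the dichotomy in Solovay-type arguments is not a $0$--$1$ law for the Boolean value of ``the canonical generic lies in $A$''; it is the split on whether $A$ is contained in the $\kappa^\omega$ of the intermediate model (whose size one must then bound) or contains a point suitably generic over it. Making that split, and the subsequent tree construction, work for the diagonal Merimovich quotient requires the concrete combinatorics of Definition \ref{meridiag} and does not follow from the abstract $\Sigma$-Příkrý axioms alone; this is precisely the content of the cited work.
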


\begin{proof}
    The proof of this theorem can be found in \cite[Corollary 4.36]{DPT}.
\end{proof}

As before, we now manage to prove the following corollary.

\begin{corollary}\label{pspspsp}
    Let $V\subseteq M$ be models of \textsf{ZFC} such that $L(V_{\kappa+1})^M$ is $\kappa$-Solovay over $V$, and assume that $\kappa=\sup_n \kappa_n$ where each $\kappa_n$ is $({<}(\kappa^+)^M)$-supercompact in $V$. Then every subset of $\kappa^\omega$ that belongs to $L(V_{\kappa+1})^M$ has the $\kappa$-perfect set property, both in $M$ and in $L(V_{\kappa+1})^M$; and every subset of $\textstyle \prod_{n<\omega}\kappa_n$ that belongs to $L(V_{\kappa+1})^M$ has the $\mathcal U$-Baire property, both in $M$ and in $L(V_{\kappa+1})^M$, for $\mathcal U$ a sequence in $V$ of normal measures on the cardinals $\kappa_n$ for $n<\omega$.
\end{corollary}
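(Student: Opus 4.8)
The plan is to reduce this corollary to the earlier Corollary~\ref{pspspsp}'s analogue exactly as we did for the $\kappa$-perfect set property, via Theorem~\ref{TWgen}. First I would observe that, since each $\kappa_n$ is $({<}(\kappa^+)^M)$-supercompact in $V$ and $\kappa=\sup_n\kappa_n$, Lemma~\ref{diagomerifully} applies to the diagonal Merimovich forcing $\mathbb P_\infty$ built in $V$ from the sequence $\langle\kappa_n:n<\omega\rangle$ and the inaccessible $\lambda=(\kappa^+)^M$: this $\mathbb P_\infty$ is a continuous $(\kappa,(\kappa^+)^M)$-nice weak $\Sigma$-system in $V$. (The inaccessibility of $(\kappa^+)^M$ in $V$ is part of the hypothesis that $L(V_{\kappa+1})^M$ is $\kappa$-Solovay over $V$, via the second clause of Definition~\ref{k-Solovay} applied to, say, $x=\varnothing$.)

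Next I would invoke Theorem~\ref{TWgen} with this $\mathbb P_\infty$: in a suitable forcing extension of $M$ there is a $V$-generic filter $G\subseteq\mathbb P_\infty$ with $\wp(\kappa)^M=\wp(\kappa)^{V[G]}$, hence $L(V_{\kappa+1})^M=L(V_{\kappa+1})^{V[G]}$. Now apply Theorem~\ref{Ubai} to $V[G]$: since $\mathbb P_\infty$ is the diagonal Merimovich forcing, every subset of $\kappa^\omega$ in $L(V_{\kappa+1})^{V[G]}$ has the $\kappa$-perfect set property and every subset of $\prod_{n<\omega}\kappa_n$ in $L(V_{\kappa+1})^{V[G]}$ has the $\mathcal U$-Baire property, where $\mathcal U=\langle\mathcal U_n:n<\omega\rangle$ is the sequence of normal measures projected from the supercompactness embeddings as in Definition~\ref{meridiag}; note $\mathcal U\in V$, as required. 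Since $L(V_{\kappa+1})^M=L(V_{\kappa+1})^{V[G]}$, the same holds in $L(V_{\kappa+1})^M$.

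Finally I would transfer these conclusions from $L(V_{\kappa+1})^M$ to $M$ itself using absoluteness: as remarked after Definition~\ref{polakko} for the $\kappa$-perfect set property, and in the paragraph preceding Theorem~\ref{Ubai} for the $\mathcal U$-Baire property, both regularity notions are absolute between any two models sharing the same $\wp(\kappa)$ — in particular between $M$ and $L(V_{\kappa+1})^M$, since $\wp(\kappa)^{L(V_{\kappa+1})^M}=\wp(\kappa)^M$ (here $\beth_\kappa=\kappa$ is implicitly in force for this section, so $L(V_{\kappa+1})=L(\wp(\kappa))$). Hence every relevant set in $L(V_{\kappa+1})^M$ has the stated property in $M$ as well, which is exactly the claim. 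The only point requiring care — and the closest thing to an obstacle — is making sure the measure sequence $\mathcal U$ witnessing the $\mathcal U$-Ellentuck-Příkrý topology is the one living in $V$ (not some sequence only available in $M$ or $V[G]$), so that the statement is meaningful and matches the hypotheses of Theorem~\ref{Ubai}; this is immediate from the construction in Definition~\ref{meridiag}, since the $\mathcal U_n$ are defined there directly from the embeddings $j_n\in V$.
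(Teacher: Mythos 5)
Your proposal is correct and follows essentially the same route as the paper: Lemma~\ref{diagomerifully} gives continuity of the diagonal Merimovich forcing, Theorem~\ref{TWgen} identifies $L(V_{\kappa+1})^M$ with $L(V_{\kappa+1})^{V[G]}$, and Theorem~\ref{Ubai} plus the absoluteness remarks yield the conclusion. The extra checks you include (inaccessibility of $(\kappa^+)^M$ in $V$, and that $\mathcal U$ lives in $V$) are sensible additions that the paper leaves implicit.
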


\begin{proof}
We know that the diagonal Merimovich forcing built from $\kappa$ and $(\kappa^+)^M$ is a $(\kappa,(\kappa^+)^M)$-nice weak $\Sigma$-system in $V$. Hence by Theorem \ref{TWgen} $L(V_{\kappa+1})^M=L(V_{\kappa+1})^{V[G]}$ where $G$ is $V$-generic for the diagonal Merimovich forcing, and we apply Theorem \ref{Ubai} to find the desired conclusion.
\end{proof}

If we denote by $\mathcal U\text{-}\mathsf{BP}$ the statement ``every subset of $\textstyle \prod_{n<\omega}\kappa_n$ has the $\mathcal U$-Baire property'', as a consequence of Corollary \ref{pspspsp} we get that

\begin{corollary}
    Let $V\subseteq M$ be models of \textsf{ZFC} and let $\kappa=\sup_n \kappa_n$ where each $\kappa_n$ is a $({<}(\kappa^+)^M)$-supercompact cardinal in $V$, and $\beth_\kappa=\kappa$. Then, if $L(V_{\kappa+1})^M$ is a $\kappa$-Solovay model over $V$,
    \[L(V_{\kappa+1})^M\models \mathsf{ZF}+\mathsf{DC}_\kappa+\kappa\text{-}\mathsf{PSP}+\mathcal U\text{-}\mathsf{BP}.\]
\end{corollary}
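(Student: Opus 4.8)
The plan is to obtain this statement by combining the results already established in this section, with essentially no new work. First I would record that the two structural hypotheses in force suffice to get $\mathsf{ZF}+\mathsf{DC}_\kappa$: indeed $\cof(\kappa)=\omega$ because $\kappa$ is the supremum of the fixed $\Sigma$-sequence $\langle\kappa_n:n<\omega\rangle$, and $\beth_\kappa=\kappa$ is assumed; so, as noted at the start of the section, $L(V_{\kappa+1})^M=L(\wp(\kappa))^M$ and, by \cite[Lemma 4.10]{DimonteRankIntoRank}, $L(V_{\kappa+1})^M\models\mathsf{ZF}+\mathsf{DC}_\kappa$. This takes care of the first two conjuncts.

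Next I would simply apply Corollary \ref{pspspsp}, whose hypotheses hold verbatim: $V\subseteq M$ are models of \textsf{ZFC}, $L(V_{\kappa+1})^M$ is a $\kappa$-Solovay model over $V$, and $\kappa=\sup_n\kappa_n$ with each $\kappa_n$ being $({<}(\kappa^+)^M)$-supercompact in $V$. That corollary yields, inside $L(V_{\kappa+1})^M$, that every subset of $\kappa^\omega$ has the $\kappa$-perfect set property, i.e.\ $L(V_{\kappa+1})^M\models\kappa\text{-}\mathsf{PSP}$, and that every subset of $\prod_{n<\omega}\kappa_n$ has the $\mathcal U$-Baire property, i.e.\ $L(V_{\kappa+1})^M\models\mathcal U\text{-}\mathsf{BP}$, for $\mathcal U=\langle\mathcal U_n:n<\omega\rangle$ a sequence in $V$ of normal measures on the cardinals $\kappa_n$; such a sequence exists because $({<}(\kappa^+)^M)$-supercompact cardinals are in particular measurable, and one simply chooses each $\mathcal U_n$ in $V$.

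The one point I would spell out is that the sentence $\mathcal U\text{-}\mathsf{BP}$ is genuinely interpretable inside $L(V_{\kappa+1})^M$. Since $\kappa$ is a strong limit, each $\mathcal U_n$ has size less than $\kappa$ and lies in $V_\kappa$, so the countable sequence $\mathcal U$ lies in $V_\kappa$ as well; and by the first clause of Definition \ref{k-Solovay} we have $V_\kappa=V_\kappa^M$, whence $\mathcal U\in V_\kappa^M\subseteq L(V_{\kappa+1})^M$, and the $\mathcal U$-Ellentuck-Příkrý topology, its $\kappa$-meager ideal, and the $\mathcal U$-Baire property are all computed the same way inside the model as outside. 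Conjoining the three items then gives $L(V_{\kappa+1})^M\models\mathsf{ZF}+\mathsf{DC}_\kappa+\kappa\text{-}\mathsf{PSP}+\mathcal U\text{-}\mathsf{BP}$, as required. I do not anticipate any real obstacle: the substantive content is entirely in Corollary \ref{pspspsp} and in the earlier observation about $\mathsf{DC}_\kappa$, and what remains is just the routine absoluteness bookkeeping just indicated.
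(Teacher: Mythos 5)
Your proposal is correct and matches the paper's (implicit) argument exactly: the paper derives this corollary by conjoining the $\mathsf{ZF}+\mathsf{DC}_\kappa$ observation from the start of the section (via $\cof(\kappa)=\omega$, $\beth_\kappa=\kappa$ and \cite[Lemma 4.10]{DimonteRankIntoRank}) with Corollary \ref{pspspsp}. One tiny imprecision in your bookkeeping: the sequence $\mathcal U=\langle\mathcal U_n:n<\omega\rangle$ has rank $\kappa$, so it lies in $V_{\kappa+1}$ rather than $V_\kappa$, but it is still an element of $L(V_{\kappa+1})^M$, so your conclusion stands.
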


    \section{An Application for Absoluteness}

Here we prove our Theorem \ref{bb2}, the higher analogue of Bagaria-Bosch's Theorem \ref{bb}. This shows the existence of an elementary embedding between $\kappa$-Solovay models.

\begin{theorem}\label{bb2}
Suppose $V\subseteq M$ and $V\subseteq N$ are models of \textsf{ZFC} such that $L(V_{\kappa+1})^M$ and $L(V_{\kappa+1})^N$ are $\kappa$-Solovay models over $V$, $\wp(\kappa)^M\subseteq \wp(\kappa)^N$ and $(\kappa^+)^M=(\kappa^+)^N$. Suppose there is a $(\kappa,(\kappa^+)^M)$-nice weak $\Sigma$-system in $V$. Then there is a unique
\[j:L(V_{\kappa+1})^M\hookrightarrow L(V_{\kappa+1})^N\]
that is an elementary embedding and fixes all the ordinals.
\end{theorem}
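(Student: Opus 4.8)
The plan is to mimic Bagaria and Bosch's proof of Theorem~\ref{bb}, using our Theorem~\ref{TWgen} in place of Bagaria--Woodin's Theorem~\ref{BagariaWoodin}. First I would put the two models in canonical form. Since $(\kappa^+)^M=(\kappa^+)^N$, the continuous $(\kappa,(\kappa^+)^M)$-nice weak $\Sigma$-system $\mathbb P_\infty\in V$ witnesses the hypothesis of Theorem~\ref{TWgen} for \emph{both} $M$ and $N$. Passing to an ambient model large enough to contain suitable generics over the constellating posets of $M$ and of $N$, we obtain filters $C,D\subseteq\mathbb P_\infty$ generic over $V$ with
\[L(V_{\kappa+1})^M=L(V_{\kappa+1})^{V[C]},\qquad L(V_{\kappa+1})^N=L(V_{\kappa+1})^{V[D]},\]
and with $\wp(\kappa)^{V[C]}=\wp(\kappa)^M\subseteq\wp(\kappa)^N=\wp(\kappa)^{V[D]}$. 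It therefore suffices to build an elementary embedding $j$ between these two canonical models, and the result then transfers back since both $L(V_{\kappa+1})^{V[C]}$ and $L(V_{\kappa+1})^{V[D]}$ are absolute.

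For the embedding itself, recall that every element $a$ of $L(V_{\kappa+1})^{V[C]}$ is of the form $a=\tau^{L(V_{\kappa+1})^{V[C]}}(x,\vec\alpha)$ for a definable Skolem term $\tau$, a parameter $x\in\wp(\kappa)^{V[C]}$ and a finite tuple $\vec\alpha$ of ordinals. Since $\wp(\kappa)^{V[C]}\subseteq\wp(\kappa)^{V[D]}$, I would set $j(a):=\tau^{L(V_{\kappa+1})^{V[D]}}(x,\vec\alpha)$. That $j$ is well defined, total and elementary, and that it fixes every ordinal, all reduce to the following \emph{two-model absoluteness} statement $(\dagger)$: for every formula $\varphi$, every $x\in\wp(\kappa)^{V[C]}$ and every tuple $\vec\alpha$ of ordinals,
\[L(V_{\kappa+1})^{V[C]}\models\varphi(x,\vec\alpha)\quad\Longleftrightarrow\quad L(V_{\kappa+1})^{V[D]}\models\varphi(x,\vec\alpha);\]
replacing $\varphi$ by $\neg\varphi$ shows it is enough to prove one implication. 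Uniqueness of $j$ is then immediate: any ordinal-fixing elementary embedding $L(V_{\kappa+1})^M\hookrightarrow L(V_{\kappa+1})^N$ must fix $\wp(\kappa)^M$ pointwise (if $x\subseteq\kappa$ then $\beta\in x\iff\beta\in j(x)$ for every $\beta<\kappa$), hence is completely determined on Skolem terms.

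To prove $(\dagger)$ I would characterise ``$L(V_{\kappa+1})^{V[C]}\models\varphi(x,\vec\alpha)$'' in terms that do not mention $C$. By the $\kappa$-capturing property, $x$ lies in $V[g_C]$ for a small generic $g_C:=\pi''_{\infty,d}C$ over $V$, where $d$ is an object of $\mathsf D$ with $|\mathbb P_d|\le\kappa$; note $V_\kappa^{V[g_C]}=V_\kappa$ by Lemma~\ref{bddsbs}, so in fact $x\cap\alpha\in V$ for every $\alpha<\kappa$. Writing $V[C]$ as the extension of $V[g_C]$ by the quotient forcing $\mathbb B_\infty/g_C$, the goal is a homogeneity lemma: $\mathbb B_\infty/g_C$ is homogeneous enough over $V[g_C]$ that the truth value of ``$L(\dot V_{\kappa+1})\models\varphi(\check x,\check{\vec\alpha})$'' is decided by $\mathbb 1$, so that
\[L(V_{\kappa+1})^{V[C]}\models\varphi(x,\vec\alpha)\iff V[g_C]\models\bigl(\mathbb 1_{\mathbb B_\infty/g_C}\Vdash L(\dot V_{\kappa+1})\models\varphi(\check x,\check{\vec\alpha})\bigr).\]
Running the same computation on the $D$-side with $g_D:=\pi''_{\infty,e}D$, and checking that the right-hand side does not depend on which small generic captures $x$ --- i.e. that it computes $\varphi(x,\vec\alpha)$ in the canonical $\kappa$-Solovay model over $V[x]$ --- would then give $(\dagger)$.

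The last step is where I expect the real work to be. Unlike the Lévy collapse, a $\Sigma$-Příkrý poset is very far from weakly homogeneous --- conditions with distinct stems destroy homogeneity outright --- so the classical ``the tail is homogeneous'' argument does not transfer verbatim; one has to isolate precisely the residual homogeneity that survives inside $L(V_{\kappa+1})$, presumably exploiting that $\wp(\kappa)^{V[C]}$ is governed by the system through $\kappa$-capturing together with the Trace Lemma~\ref{trace} and $\mathsf{DC}_\kappa$ in $L(V_{\kappa+1})$. A second difficulty is pinning the characterisation down to $V[x]$ alone, independently of the small generic $g_C$. I suspect the cleanest route to both --- and the one the paper most likely takes, given how much machinery it has set up --- is not abstract homogeneity at all but a back-and-forth argument in the style of the proof of Theorem~\ref{TWgen}: use the Interpolation Lemma~\ref{lemma: interpolation} and the Capturing Lemma~\ref{nocap} to transport a $C$-side witness for $\varphi(x,\vec\alpha)$ into a $D$-side witness, arranging that the two generics are captured by the same small piece computing $x$.
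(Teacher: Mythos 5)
Your setup coincides with the paper's: you define $j$ on sets definable from an ordinal and a $\kappa$-real, you reduce everything to the two-model absoluteness statement $(\dagger)$ (the paper's $(\heartsuit)$), and your uniqueness argument is the same. The gap is that $(\dagger)$ is the entire content of the theorem, and you do not prove it. Your primary route --- a homogeneity lemma for $\mathbb B_\infty/g_C$ --- is one you yourself concede is unlikely to survive the non-homogeneity of $\Sigma$-Příkrý posets, and your fallback is a guess at a strategy rather than an argument. The paper uses no homogeneity at all; the mechanism is the following. Given $a\in\wp(\kappa)^M$, Claim \ref{54} yields an object $d$ and a $\mathbb B_d$-generic $g$ over $V$ with $a\in V[g]$ and $g\in M$; since $\mathbb P_\infty$ is $(\kappa^+)^M$-bounded, $g$ is coded by a $\kappa$-real of $M$, hence $g\in N$ too, so every triple $\langle p,d,g\rangle$ with $p\in\mathbb B_\infty/g$ is a condition of \emph{both} constellating posets $\mathbb W^M$ and $\mathbb W^N$. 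If $L(V_{\kappa+1})^M\models\varphi(\alpha,a)$, take any $\mathbb W^M$-generic $H_M$ and the resulting $C_M$; the forcing theorem over $V[g]$ gives $p\in C_M$ with $L(V_{\kappa+1})^{V[g]}\models\bigl(p\Vdash_{\mathbb P_\infty/g}\varphi(\check\alpha,\check a)\bigr)$, a fact of $V[g]$ and hence visible to $N$. One then chooses the $\mathbb W^N$-generic $H_N$ \emph{after the fact} so that $\langle p,d,g\rangle\in H_N$, which puts $p$ into $C_N$, whence $L(V_{\kappa+1})^{V[g][C_N]}\models\varphi(\alpha,a)$ and, by Theorem \ref{TWgen}, $L(V_{\kappa+1})^N\models\varphi(\alpha,a)$.

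Note that this adaptive, per-formula re-choice of the $N$-side generic is exactly what your framing forecloses: by fixing $C$ and $D$ once at the outset, you give up the freedom to steer the witness condition into the second generic, and you are left needing precisely the homogeneity you correctly suspect is unavailable. (Fixing $D$ in advance is harmless only because $L(V_{\kappa+1})^{V[D]}$ does not depend on the choice of generic for $\mathbb W^N$ --- but that invariance is itself a consequence of Theorem \ref{TWgen}, and it is the flexibility in choosing the generic, not any homogeneity of the quotient, that closes the argument.)
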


\begin{proof}
First of all, notice that if $j$ is an embedding as in the statement, and $a\subseteq\kappa$ is in $M$, $j(a)=a$. Indeed, since $\wp(\kappa)^M\subseteq \wp(\kappa)^N$, the element $j(a)$ is also a subset of $\kappa$ in $N$, and $\alpha\in j(a)$ if and only if $\alpha\in a$ for all ordinals $\alpha$, since $j(\alpha)=\alpha$.\medskip

Fix now some $A\in L(V_{\kappa+1})^M$. By definition of $L(V_{\kappa+1})^M$, there are an ordinal $\alpha$, a set $a\in \wp^M(\kappa)$ and a formula $\varphi$ of the language of set theory such that
\[A = \{x\in M: L(V_{\kappa+1})^M\models \varphi(x,\alpha,a)\}.\]
If the embedding $j$ as in the statement exists, then necessarily it holds that
\[j(A) = \{x\in N: L(V_{\kappa+1})^N \models \varphi(x,\alpha,a)\}.\]
So we can take this as the definition of $j$, and then we show this is well-defined and an elementary embedding, and we automatically get the uniqueness of such a $j$.\medskip

In order to do so, we only need to show that for every ordinal $\alpha$, every $a\in\wp^M(\kappa)$ and every set-theoretic formula $\varphi$ it holds the equivalence
\[L(V_{\kappa+1})^M \models \varphi(\alpha,a)\ \text{if and only}\ L(V_{\kappa+1})^N \models \varphi(\alpha,a).\hspace{1cm}(\heartsuit)\]
Indeed, assume we proved the equivalence $(\heartsuit)$. Let $A_1$ and $A_2$ be defined by
\[A_i := \{x\in M: L(V_{\kappa+1})^M\models \varphi_i(x,\alpha_i,a_i)\}\]
where $\alpha_1$ and $\alpha_2$ are ordinals, $a_1$ and $a_2$ are elements of $\wp^M(\kappa)$, and $\varphi_1$ and $\varphi_2$ are formul\ae\ in the language of set theory. Then it holds that $A_1=A_2$ if and only if
\[L(V_{\kappa+1})^M \models \forall x\ (\varphi_1(x,\alpha_1,a_1) \leftrightarrow \varphi_2(x,\alpha_2,a_2))\]
and applying the equivalence $(\heartsuit)$ this holds if and only if
\[L(V_{\kappa+1})^N \models \forall x\ (\varphi_1(x,\alpha_1,a_1) \leftrightarrow \varphi_2(x,\alpha_2,a_2)),\]
since we can code $\alpha_1$ and $\alpha_2$ into an ordinal $\alpha$, and $a_1$ and $a_2$ into some $a\in\wp^M(\kappa)$. By definition of $j$, this is equivalent to $j(A_1)=j(A_2)$, proving that $j$ is well-defined. Following the same proof strategy, we get that $A_1\subseteq A_2$ if and only if
\[L(V_{\kappa+1})^M \models \forall x\ (\varphi_1(x,\alpha_1,a_1) \to \varphi_2(x,\alpha_2,a_2))\]
and thanks to $(\heartsuit)$ this is equivalent to
\[L(V_{\kappa+1})^N \models \forall x\ (\varphi_1(x,\alpha_1,a_1) \to \varphi_2(x,\alpha_2,a_2)),\]
which in turn is equivalent to $j(A_1)\subseteq j(A_2)$ by definition. Since the relation $\in$ is definable from the relation $\subseteq$, we get that $j$ is an elementary embedding.\medskip

Hence, we just need to prove the equivalence $(\heartsuit)$. Let $\mathbb P_\infty$ be a $(\kappa,(\kappa^+)^M)$-nice weak $\Sigma$-system. Since $L(V_{\kappa+1})^M$ is $\kappa$-Solovay over $V$, we can consider its constellating poset $\mathbb W^M\in M$ of $\mathbb P_\infty$ in $M$ as in the proof of Theorem \ref{TWgen}.\medskip

Since $(\kappa^+)^N=(\kappa^+)^M$, $\mathbb P_\infty$ is also a $(\kappa, (\kappa^+)^N)$-nice weak $\Sigma$-system in $V$, and $L(V_{\kappa+1})^N$ is a $\kappa$-Solovay model over $V$, so we can consider the constellating poset of $\mathbb P_\infty$ over $N$ as in the proof of Theorem \ref{TWgen}, and call it $\mathbb W^N\in N$.\medskip

Recall that we are trying to prove $(\heartsuit)$, so let us fix an ordinal $\alpha$, a subset $a\subseteq\kappa$ in $M$ and a formula $\varphi$ of set theory. Thanks to the Claim \ref{54} of the proof of Theorem \ref{TWgen}, we know there are an object $d$ in $\mathsf D$ and $g\subseteq\mathbb B_d$ that is $V$-generic with $a\in V[g]$, $g\in M$.\medskip

Since the transitive closure of $\mathbb B_d$ has size $\kappa$ in $M$ thanks to the fact $\mathbb P_\infty$ is $(\kappa^+)^M$-bounded, we can code $g$ into a $\kappa$-real of $M$, hence the code is also in $N$, so also $g\in N$. Notice that for all $p\in\mathbb B_\infty/g$ it holds $\langle p,d,g\rangle\in \mathbb W^M\cap \mathbb W^N$.\medskip

Assume $L(V_{\kappa+1})^M\models \varphi(\alpha,a)$, or equivalently $M\models\varphi(\alpha,a)^{L(V_{\kappa+1})}$. Let $H_M\subseteq\mathbb W^M$ be $M$-generic, $C_M\in M[H_M]$ be a $\mathbb P_\infty/g$-generic filter over $V$, as defined by Theorem \ref{TWgen}. Then it holds that $L(V_{\kappa+1})^M=L(V_{\kappa+1})^{V[g][C_M]}$ and so 
\[V[g][C_M]\models \varphi(\alpha,a)^{L(V_{\kappa+1})}.\]
In particular there is some $p\in C_M$ such that \[V[g]\models (p \Vdash_{\mathbb P_\infty/g} \varphi(\check \alpha, \check a)^{L(V[\dot{G}]_{\kappa+1})}).\]
Now, since $\langle p,d,g\rangle \in \mathbb W^M\cap \mathbb W^N$, let us pick a $N$-generic filter $H_N\subseteq\mathbb W^N$ with $\langle p,d,g\rangle\in H_N$. And let $C_N\in N[H_N]$ be a $\mathbb P_\infty/g$-generic filter over $V$ with $p\in C_N$, defined as in the proof of Theorem \ref{TWgen}, so that $L(V_{\kappa+1})^N=L(V_{\kappa+1})^{V[g][C_N]}$. Now we have a $p\in C_N$ such that 
\[V[g]\models (p \Vdash_{\mathbb P_\infty/g} \varphi(\check \alpha, \check a)^{L(V[\dot{G}]_{\kappa+1})}),\]
hence by the forcing theorem it holds that $V[g][C_N]\models \varphi(\alpha,a)^{L(V_{\kappa+1})}$, or equivalently \[L(V_{\kappa+1})^{V[g][C_N]}\models\varphi(\alpha,a).\] Therefore, thanks to Theorem \ref{TWgen} we proved that $L(V_{\kappa+1})^{N}\models\varphi(\alpha,a)$.\medskip

Now assume $L(V_{\kappa+1})^N\models \varphi(\alpha,a)$. Let $H_N\subseteq\mathbb W^N$ be a $N$-generic filter and let $C_N\in N[H_N]$ be a $\mathbb P_\infty/g$-generic filter over $V$, defined by Theorem \ref{TWgen}. Thus \[V[g][C_N]\models \varphi(\alpha,a)^{L(V_{\kappa+1})},\] so in particular there is some $p\in C_N$ such that \[V[g]\models (p \Vdash_{\mathbb P_\infty/g} \varphi(\check \alpha, \check a)^{L(V[\dot{G}]_{\kappa+1})}).\]
Now since $\langle p,d,g\rangle \in \mathbb W^M\cap \mathbb W^N$, let us pick a $M$-generic filter $H_M\subseteq\mathbb W^M$ such that $\langle p,d,g\rangle\in H_M$. And let $C_M\in M[H_M]$ be a $\mathbb P_\infty/g$-generic filter over $V$ with $p\in C_M$, defined as in the proof of Theorem \ref{TWgen}. Now there is $p\in C_M$ with
\[V[g]\models (p \Vdash_{\mathbb P_\infty/g} \varphi(\check \alpha, \check a)^{L(V[\dot{G}]_{\kappa+1})}),\]
hence by the forcing theorem we get that $L(V_{\kappa+1})^{V[g][C_M]}\models \varphi(\alpha,a)$ and so by our Theorem \ref{TWgen} we have $L(V_{\kappa+1})^{M}\models\varphi(\alpha,a)$, as desired, concluding the proof.
%
%
%
%
%
%
%
\end{proof}

As a consequence, we get the following corollary:

\begin{corollary}\label{82}
Suppose $V\subseteq M$ and $V\subseteq N$ are models of \textsf{ZFC} such that $L(V_{\kappa+1})^M$ and $L(V_{\kappa+1})^N$ are $\kappa$-Solovay models over $V$, $\wp(\kappa)^M\subseteq \wp(\kappa)^N$ and $(\kappa^+)^M=(\kappa^+)^N$. Assume $\kappa$ is $({<}(\kappa^+)^M)$-supercompact in $V$. Then there is a unique
\[j:L(V_{\kappa+1})^M\hookrightarrow L(V_{\kappa+1})^N\]
that is an elementary embedding and fixes all the ordinals.
\end{corollary}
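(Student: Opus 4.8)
The plan is to reduce Corollary \ref{82} to Theorem \ref{bb2}: that theorem already does all the work, and its only hypothesis not appearing verbatim in the corollary is the existence, inside $V$, of a continuous $(\kappa,(\kappa^+)^M)$-nice weak $\Sigma$-system. So the whole task is to manufacture such a system, and the supercompactness assumption is tailored precisely for this — it lets us use the Merimovich supercompact extender-based Příkrý forcing of Definition \ref{merimovich}.

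Concretely, first I would set $\lambda := (\kappa^+)^M = (\kappa^+)^N$ and observe that $\lambda$ is inaccessible in $V$. This is immediate from the second clause of Definition \ref{k-Solovay} applied to $L(V_{\kappa+1})^M$ with the subset $x = \varnothing \subseteq \kappa$: it gives that $(\kappa^+)^M$ is an inaccessible cardinal in $V[\varnothing] = V$. Combined with the hypothesis that $\kappa$ is $({<}\lambda)$-supercompact in $V$, this means that in $V$ we may form the Merimovich forcing relative to the pair $\kappa < \lambda$. By the lemma recorded just after Definition \ref{merimovich} this forcing is a $(\kappa,\lambda)$-nice weak $\Sigma$-system, and by Lemma \ref{merifully} it is continuous; hence it is a continuous $(\kappa,(\kappa^+)^M)$-nice weak $\Sigma$-system living in $V$.

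At this point all the hypotheses of Theorem \ref{bb2} are in place — $L(V_{\kappa+1})^M$ and $L(V_{\kappa+1})^N$ are $\kappa$-Solovay over $V$, $\wp(\kappa)^M \subseteq \wp(\kappa)^N$, $(\kappa^+)^M = (\kappa^+)^N$, and we have just produced the required weak $\Sigma$-system — so Theorem \ref{bb2} applies and delivers the unique elementary embedding $j : L(V_{\kappa+1})^M \hookrightarrow L(V_{\kappa+1})^N$ fixing every ordinal. I do not expect any genuine obstacle: the substantive content has already been absorbed into Theorem \ref{bb2} and into Section 4, and this corollary is exactly parallel to Corollary \ref{pspsp}, which extracts a consequence of Theorem \ref{TWgen} via the very same supercompactness-to-Merimovich step. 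The one point deserving an explicit line is that $(\kappa^+)^M$ is inaccessible in $V$, since otherwise the Merimovich forcing would not even be defined there; as noted, this is built into the definition of a $\kappa$-Solovay model. If instead one works in the situation $\kappa = \sup_n \kappa_n$ with each $\kappa_n$ being $({<}(\kappa^+)^M)$-supercompact in $V$, the same argument goes through using the diagonal Merimovich forcing of Definition \ref{meridiag} together with Lemma \ref{diagomerifully} in place of Lemma \ref{merifully}.
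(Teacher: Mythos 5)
Your proposal is correct and follows exactly the paper's own route: instantiate the Merimovich forcing at $\kappa<\lambda:=(\kappa^+)^M$ (noting $\lambda$ is inaccessible in $V$ by the second clause of Definition \ref{k-Solovay}), invoke Lemma \ref{merifully} for continuity, and then apply Theorem \ref{bb2}. The extra sentence justifying the inaccessibility of $(\kappa^+)^M$ in $V$ is a worthwhile detail the paper leaves implicit.
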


\begin{proof}
    We can apply Lemma \ref{merinonfully} to deduce that under these hypotheses there is a $(\kappa,(\kappa^+)^M)$-nice weak $\Sigma$-system in $V$, and then apply Theorem \ref{bb2}.
\end{proof}

\section{Open Questions}

In Corollary \ref{82}, we are assuming that $\kappa$ is $({<}\lambda)$-supercompact for some inaccessible $\lambda$ because this is the least large cardinal hypothesis under which we know for sure there is a $\kappa$-Solovay model, thanks to Lemma \ref{merinonfully} and Theorem \ref{TWgen}. We ask whether this is optimal.

\begin{question}\label{1}
    What is, modulo \textsf{ZFC}, the exact consistency strength of the existence of a $\kappa$-Solovay model over some inner model $V$?
\end{question}

Note that, in light of Corollary \ref{66}, the consistency strength of the existence of a $\kappa$-Solovay model is the same as the one of the existence of a $(\kappa,\lambda)$-nice weak $\Sigma$-system. Therefore, Question \ref{1} can be rephrased as a first-order question in the language of \textsf{ZFC}.\medskip

Barrera, Dimonte and Müller in \cite{Nando} showed that, if the $\kappa$-perfect set property holds at $\kappa$-coanalytic sets, then there is an inner model with $\kappa$-many measurable cardinals. This gives us a lower bound on the large cardinal hypothesis for Question \ref{1}.\medskip

A remarkable consequence of $I_0(\kappa)$ discovered by Woodin is that $\kappa^+$ is measurable in $L(V_{\kappa+1})$ as witnessed by restrictions of the club filter on $\kappa^+$. This mirrors the fact that $\omega_1$ is measurable in $L(\mathbb{R})$ under $\textsf{AD}^{L(\mathbb{R})}$.
\begin{question}\label{question: measurability}
    Let $L(V_{\kappa+1})^M$ be $\kappa$-Solovay over $V$. Is $(\kappa^+)^{M}$ measurable in $L(V_{\kappa+1})^M$?
\end{question}

Under $I_0(\kappa)$, other easier combinatorial consequences about $L(V_{\kappa+1})$ can be proven. We ask whether the same hold in $\kappa$-Solovay models. 

\begin{question}
   Let us work in $L(V_{\kappa+1})^M$, a $\kappa$-Solovay model over $V$. Is there a scale at $\kappa$? Does $\square_\kappa$ fail? Is it true that there is no $\kappa^+$-Aronszajn tree? Is it true that there is no $\kappa^+$-sequence of distinct elements of $V_{\kappa+1}$? 
\end{question}

Another important result about classical Solovay models is their correctness under large cardinal hypotheses, meaning that under some of these hypotheses the inner model $L(\mathbb R)$ of $V$ is elementarily equivalent to any Solovay model over $V$. This fact is a consequence of remarkable work by Saharon Shelah and Hugh Woodin from \cite{SW}.

\begin{question}
    Under large cardinal hypothesis, is it true that the $L(V_{\kappa+1})$ of $V$ is elementary equivalent to any $\kappa$-Solovay model over $V$?
\end{question}

The analysis conducted in Section 5 opens a potentially fruitful line of inquiry: the investigation of regularity properties and combinatorial principles at small singular cardinals, with $\aleph_\omega$	functioning as a prototypical case. To date, it is unclear whether $\aleph_\omega$-Solovay models exist. This problem underscores a broader methodological challenge, namely, the nontriviality of downward transfer principles in set theory, particularly when attempting to reflect combinatorial configurations established at higher singular cardinals $\kappa$ to the lower one (that is $\aleph_\omega$). We are currently unable to determine how to show something similar. A vague question that summarises this line of inquiry is the following: 

\begin{question}\label{question: aleph_omega}
    Modulo large cardinals, is there a forcing extension $M$ of $V$ such that $L(\wp(\aleph_\omega))$ is some kind of higher Solovay model over $V$?
\end{question}

With a view to Questions \ref{question: measurability} and \ref{question: aleph_omega} above, it is natural to look at the measurability of $\aleph_\omega^+$.

\begin{question}
Can it be consistent, modulo large cardinals, a configuration where $\aleph_\omega$ is a strong limit cardinal and $\aleph_\omega^+$ is measurable in $L(\wp({\aleph_\omega}))$?
\end{question}

In \cite{AP25}, Poveda and the second author produced a model of \textsf{ZFC} where $\aleph_\omega$ is a strong limit cardinal and the inner model $L(\wp({\aleph_\omega}))$ satisfies many interesting Solovay-like properties: every subset $A\subseteq{}^\omega\aleph_\omega$ has the $\aleph_\omega$-perfect set property; there is no scale at $\aleph_\omega$; the Singular Cardinal Hypothesis fails at $\aleph_\omega$; and  Shelah's Approachability property fails at $\aleph_\omega$.\medskip

However, due to the lack of $\aleph_\omega$-analogues of $I_0(\kappa)$, the analogous configuration at the level of the first singular cardinal is not even known to be consistent.

\section*{Acknowledgements}

The first author is a member of the Gruppo Nazionale per le Strutture Algebriche, Geometriche e le loro Applicazioni (GNSAGA) of the Istituto Nazionale di Alta Matematica (INdAM). The second author was supported by the Italian PRIN 2022 Grant ``Models, sets and classifications'' and by the European Union -- Next Generation EU.\medskip

We would like to wholeheartedly thank Maria Bevilacqua for her many insightful comments on the category-theoretic statements and definitions, which greatly helped improve the readability and clarity of the paper.

\end{document}